\newcommand{\tabincell}[2]{\begin{tabular}{@{}#1@{}}#2\end{tabular}}
\journalname{Optimization Letters}
\newcommand{\by}{\mathbf{y}}
\newcommand{\bu}{\mathbf{u}}
\newcommand{\bp}{\mathbf{p}}
\newcommand{\bU}{\mathbf{U}}
\newcommand{\bM}{\mathbf{M}}
\newcommand{\bR}{\mathbf{R}}
\newcommand{\bQ}{\mathbf{Q}}
\newcommand{\bx}{\mathbf{x}}
\newcommand{\ba}{\mathbf{a}}
\newcommand{\bb}{\mathbf{b}}
\newcommand{\bc}{\mathbf{c}}
\newcommand{\bw}{\mathbf{w}}
\newcommand{\bv}{\mathbf{v}}
\newcommand{\bGamma}{\mathbf{\Gamma}}
\newcommand{\bTheta}{\mathbf{\Theta}}
\newcommand{\bXi}{\mathbf{\Gamma}}
\newcommand{\bPi}{\mathrm{\Pi}}
\newcommand{\bPsi}{\mathrm{\Psi}}
\newcommand{\bOmega}{\mathbf{\Omega}}
\newcommand{\I}{\mathbf{I}}
\newcommand{\cI}{\mathcal{I}}
\newcommand{\cL}{\mathcal{L}}
\newcommand{\N}{\mathbb{N}}
\newcommand{\cA}{\mathcal{A}}
\newcommand{\R}{\mathbb{R}}
\newcommand{\dimsf}{\mathsf{dim}}
\newcommand{\dom}{\mathsf{dom}}
\newcommand{\zer}{\mathsf{zer}}
\newcommand{\prox}{\mathsf{prox}}
\newcommand{\bB}{\mathbf{B}}
\newcommand{\cT}{\mathcal{T}}
\newcommand{\cM}{\mathcal{M}}
\newcommand{\cK}{\mathcal{K}}
\newcommand{\cO}{\mathcal{O}}
\newcommand{\bt}{\mathbf{t}}
\newcommand{\bA}{\mathbf{A}}
\newcommand{\bbS}{\mathbb{S}}
\newcommand{\bs}{\mathbf{s}}
\newcommand{\bz}{\mathbf{z}}
\newcommand{\rank}{\mathsf{rank}}
\newcommand{\cQ}{\mathcal{Q}}
\newcommand{\cD}{\mathcal{D}}
\newcommand{\cS}{\mathcal{S}}
\newcommand{\cB}{\mathcal{B}}
\newcommand{\cG}{\mathcal{G}}
\newcommand{\cR}{\mathcal{R}}
\newcommand{\bxstar}{{\mathbf{x}}^\star}
\newcommand{\bcstar}{{\mathbf{c}}^\star}
\newcommand{\bustar}{{\mathbf{u}}^\star}
\newcommand{\bastar}{{\mathbf{a}}^\star}
\newcommand{\bbstar}{{\mathbf{b}}^\star}
\newcommand{\bzstar}{{\mathbf{z}}^\star}
\newcommand{\bpstar}{{\mathbf{p}}^\star}
\newcommand{\bchat}{\widehat{\mathbf{c}}}
\newcommand{\bphat}{\widehat{\mathbf{p}}}
\newcommand{\bvhat}{\widehat{\mathbf{v}}}
\newcommand{\buhat}{\widehat{\mathbf{u}}}
\newcommand{\bbhat}{\widehat{\mathbf{b}}}
\newcommand{\bxhat}{\widehat{\mathbf{x}}}
\newcommand{\bahat}{\widehat{\mathbf{a}}}
\newcommand{\bxtilde}{\tilde{\mathbf{x}}}
\newcommand{\bctilde}{\tilde{\mathbf{c}}}
\newcommand{\batilde}{\tilde{\mathbf{a}}}
\newcommand{\bptilde}{\tilde{\mathbf{p}}}
\newcommand{\bbtilde}{\tilde{\mathbf{b}}}
\newcommand{\cAtilde}{\tilde{\mathcal{A}}}
\newcommand{\be}{\begin{equation}}
\newcommand{\ee}{\end{equation}}
\begin{document}

\title{The operator splitting schemes revisited: primal-dual gap and degeneracy reduction   by a unified analysis}
%\subtitle{Do you have a subtitle?\\ If so, write it here}

\titlerunning{The primal-dual gap and degeneracy reduction of operator splitting schemes}        % if too long for running head

\author{Feng Xue   % \and    Second Author %etc.
}

%\authorrunning{Short form of author list} % if too long for running head

\institute{Feng Xue \at
              National key laboratory, Beijing,  China  \\
              \email{fxue@link.cuhk.edu.hk}           %  \\
%             \emph{Present address:} of F. Author  %  if needed
         %  \and            S. Author \at            second address
}

\date{Received: date / Accepted: date}
% The correct dates will be entered by the editor

\maketitle

\begin{abstract}
We revisit the operator splitting schemes proposed in a recent work of [{\it Some extensions of the operator splitting schemes based on Lagrangian and primal-dual: A unified proximal point analysis}, Feng Xue, Optimization, 2022,  doi:
10.1080/02331934.2022.2057309], and further analyze the convergence of the generalized Bregman distance and the primal-dual gap of these algorithms within a unified proximal point framework. The possibility of reduction to a simple resolvent is also discussed by exploiting the structure and possible degeneracy of the underlying metric. 
\keywords{Operator splitting \and proximal point algorithms \and  primal-dual gap \and generalized Bregman distance \and degeneracy \and resolvent}
% \PACS{PACS code1 \and PACS code2 \and more}
\subclass{47H05 \and  49M29 \and  49M27 \and  90C25}
\end{abstract}

\section{Introduction}
\label{sec:intro}
The present paper discusses the operator splitting schemes for solving \cite[Eq.(1)]{dinh}
\be \label{primal}
\min_\bx  f(\bx) +  g(\bA\bx),
\ee
where $\bx \in \R^N$,  $\bA: \R^N \mapsto\R^{M}$ is a linear operator, $f: \R^{N} \mapsto \R \cup \{+\infty\} $ and $g: \R^{M} \mapsto \R \cup \{+\infty\} $ are proper, lower semi-continuous (l.s.c.),  convex (not necessarily smooth) and proximable\footnote{We say a convex function $f$ is {\it proximable}, if the proximity operator of $f$ has a closed-form representation or at least can be solved efficiently up to high precision \cite{condat_2013}. This property is also called `{\it simple}' \cite{condat_2013} or  `{\it with inexpensive proximity operator}' \cite{vanden_2014}.} functions.  As observed in \cite[Sect. 2.1]{drori} and \cite[Sect. 1.1]{fxue_gopt},  the problem \eqref{primal} also covers the minimization of the sum of multiple functions $g_i$ composed with linear operators $\bA_i$, i.e. $\min_\bx \sum_{i=1}^I g_i(\bA_i\bx)$, if we define  $\bA = \begin{bmatrix}
\bA_1  \\ \vdots \\  \bA_I \end{bmatrix}: \R^N \mapsto \R^M$, with  $\bA_i: \R^N \mapsto\R^{M_i}$ and  $M=\sum_{i=1}^I M_i$,  $g: \R^{M_1}  \times \cdots \times
\R^{M_I} \mapsto \R \cup \{+\infty\}: (\ba_1, \cdots, \ba_I) \mapsto \sum_{i=1}^I g_i(\ba_i)$.  This problem can be solved by many classes of operator splitting algorithms, e.g., Douglas-Rachford splitting (DRS) \cite{lions},  primal-dual splitting (PDS) \cite{pdhg,cp_2011}, the alternating direction method of multipliers (ADMM) \cite{glowinski_1,glowinski_2}, Bregman methods \cite{osher_tv,yin_2008,sb,zxq}, and so on\footnote{Note that the well-known {\it proximal forward-backward splitting} (PFBS) algorithm may not in general be applied to solve \eqref{primal}, since neither $f$ nor $g$  is  assumed to be differentiable with a Lipschitz continuous gradient.}.

In the recent work of \cite{fxue_gopt}, we gave a brief review of the typical ADMM-type and PDS algorithms, and presented a  unified proximal point  treatment. Following this work, we in this paper attempt to answer two important questions: 
\begin{enumerate}
\item {\it What value do these operator splitting algorithms attempt to minimize? Is it possible to analyze the convergence within the unified proximal point framework?} 

\item {\it Observing that some splitting strategies generate  auxiliary variables that maybe redundant in the iterations, can they be reduced to a simpler form with smallest number of variables? Is it possible to detect and reduce the degeneracy or redundancy under the unified proximal point analysis? }
\end{enumerate}

The contributions of this paper are in order.
\begin{itemize}
\item We show more possibilities of devising new algorithms than \cite{fxue_gopt} in a more systematic way, according to the metric structure. 

\item We give an affirmative answer to the above first question: what the operator splitting algorithms try to minimize is the {\it Bregman distance} of some convex functional (and the associated {\it primal-dual gap} under additional conditions). It can be inferred by the proximal point framework (Sect. \ref{sec_lag_gap}, \ref{sec_pds_gap} and \ref{sec_mix_gap}), which enables us to perform a unified gap analysis of all the  schemes developed in \cite{fxue_gopt}, that is much simpler than the existing case studies of specific algorithms, e.g. \cite[Theorem 2.1]{bot_jmiv_2014} and \cite[Theorem 1]{cp_2011}.

\item The unified proximal point interpretation paves a way for expressing many algorithms proposed in \cite{fxue_gopt} as a simple resolvent. More remarkably, by exploiting the metric degeneracy, some algorithms, particularly the standard ADMM/DRS, can be  reduced to a simple resolvent  involving only active variables and an explicit expression of the associated maximally monotone operator. This is also a case study of the degenerate analysis recently proposed in \cite{bredies_preprint}.
\end{itemize}

\section{Preliminaries}

\subsection{Notations and definitions} \label{sec_notations}
We use standard notations and concepts from convex analysis and variational analysis, which, unless otherwise specified, can all be found in the classical and recent monographs \cite{plc_book,beck_book,rtr_book,rtr_book_2}.

A few more words about our  notations are as follows.  The class of symmetric and positive semi-definite/definite matrices is denoted by $\bbS_+$ or  $\bbS_{++}$, respectively.  We use the boldface uppercase to denote a matrix, e.g., $\bM$, the calligraphic uppercase to denote a block-structured matrix or operator, e.g., $\cM$.
The identity operator and identity matrix of size $N \times N$ are denoted  by $\cI$ and  $\I_N$.  The $\bM$-norm with $\bM \in \bbS_+$ is defined as: $\|\bx\|_\bM^2 := \langle \bx | \bM \bx \rangle$. 

The generalized proximity operator, denoted by $\prox_{f}^\bM$ is defined as  $\prox_{f}^\bM:  \bx \mapsto  \arg \min_\bu $  $ f(\bu)+ \frac{1}{2} \|\bu-\bx\|_\bM^2$, with  $\bM \in \bbS_+$ as an induced metric \cite[Eq.(4)]{vu_2015}, \cite[Definition 2.3]{pesquet_2016}.  If $\bM = \frac{1}{ \tau} \I$ (i.e. the scalar case), the generalized proximity operator reduces to ordinary one, denoted by $\prox_{\tau f}:  \bx \mapsto  \arg \min_\bu $  $ f(\bu)+ \frac{1}{2\tau} \|\bu-\bx\|^2$  \cite[Definition 12.23]{plc_book}, \cite[Eq.(2.13)]{plc}. 

The classical Bregman distance associated with the function $\varphi$ between $\bx$ and $\by$ is defined as $D_\varphi(\bx,\by)=\varphi(\bx)-\varphi(\by) - \langle \nabla \varphi(\by) | \bx - \by\rangle$, which requires the function $\varphi$ to be differentiable and strictly convex. It was then extnded to the context of proper, l.s.c. and convex function $\varphi$ along a direction of $\bv$ within its subdifferential $\partial \varphi$ \cite{kiwiel}:
\be \label{breg_def}
D_\varphi^\bv(\bx,\by)=\varphi(\bx)-\varphi(\by) - \langle \bv| \bx - \by\rangle,\quad \bv \in \partial \varphi(\by).
\ee
This plays a central role in various Bregman algorithms, e.g.,  \cite{cjf_1,sb,osher_tv,zxq}. \cite{burachik_svaa} further proposed two types of the generalized Bregman distance \eqref{breg_def}:
\[
\left\{ \begin{array}{lll}
D_\varphi^\sharp (\bx,\by) &=& \varphi(\bx)-\varphi(\by) + \sup_{\bv \in \partial \by} \langle \bv | \by - \bx\rangle,  \\
D_\varphi^\flat (\bx,\by) &=& \varphi(\bx)-\varphi(\by) + \inf_{\bv \in \partial \by} \langle \bv| \by - \bx\rangle ,
\end{array} \right.
\]
which are the upper and lower bounds of the Bregman distance generated by $\varphi$.

\subsection{Some existing results of proximal point algorithm}
\label{sec_gppa}
The generalized proximal point algorithm (PPA) is given as
 \be \label{gppa}
\left\lfloor \begin{array}{llll}
\bf 0  & :\in  & \cA \tilde{\bc}^{k} + \cQ 
(\tilde{\bc}^{k}- \bc^k),  & \text{ \rm (proximal step)} \\ 
\bc^{k+1} & := & \bc^k + \cM ( \tilde{\bc}^{k} - \bc^k), 
 & \text{ \rm (relaxation step)}
 \end{array}  \right.
\ee 
where $\cA$ is a (possibly set-valued) monotone operator, $\cQ$ is a  metric, $\cM$ is an {\it invertible} relaxation matrix. The convergence of \eqref{gppa} has been extensively studied in the contexts of DRS, PDHG and multi-block ADMM algorithms, e.g., \cite{hbs_yxm_2018,hbs_jmiv_2017,hbs_yxm_2015}, and recently  revisited in our recent works of \cite{fxue_2,fxue_gopt}. 

Here, we restate the main results therein with more straightforward proofs. More than that, our analysis admits any choices of $\cA$, $\cQ$ and $\cM$,  not limited to any specific algorithms. 

\begin{lemma} \label{l_gppa}
Let  $\{\bc^k\}_{k\in\N}$ be a sequence generated by \eqref{gppa} and $\bc^\star \in \zer \cA$. Denote $\cS: = \cQ \cM^{-1}$, $\cG := \cQ + \cQ^{\top} - \cM^\top \cQ$. Denote the operator $\cT: = (\cA+\cQ)^{-1} \cQ$, and $\cR:=\cI - \cT$.  If  $\cA$ is maximally monotone and  $\cS, \cG \in \bbS_{++}$,  then, the following hold.

{\rm (i)}  $  \big\| \bc^{k+1} - \bcstar \big\|_\cS^2
\le   \big\| \bc^k - \bcstar \big\|_\cS^2
-  \big \| \bc^k -  \bc^{k+1}  \big \|
_{\cM^{-\top} \cG \cM^{-1} }^2 $;

{\rm (ii)}  $ \big \langle  \cM^\top \cQ \cR \bc^k \big|  \cR \bc^k - \cR \bc^{k+1}  \big \rangle   \ge  \frac{1}{2}
\big\|   \cR \bc^k - \cR  \bc^{k+1}  \big\|
_{\cQ+\cQ^\top}^2$;

{\rm (iii)}  $  \big \| \bc^k - \bc^{k+1} \big\|_\cS^2 - 
\big\| \bc^{k+1} - \bc^{k+2} \big\|_\cS^2   
 \ge   \big\|   \cR \bc^k -  \cR \bc^{k+1}  \big\|_\cG^2  $.
\end{lemma}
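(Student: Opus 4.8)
The plan is to obtain (iii) directly from (ii) together with the definitions of $\cS$ and $\cG$, adding nothing but elementary algebra.

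\emph{Rewriting the increments.} The proximal step of \eqref{gppa} reads $\cQ\bc^k\in(\cA+\cQ)\tilde{\bc}^k$, so $\tilde{\bc}^k=(\cA+\cQ)^{-1}\cQ\bc^k=\cT\bc^k$, hence $\bc^k-\tilde{\bc}^k=(\cI-\cT)\bc^k=\cR\bc^k$; the relaxation step then gives $\bc^k-\bc^{k+1}=\cM(\bc^k-\tilde{\bc}^k)=\cM\cR\bc^k$, and likewise $\bc^{k+1}-\bc^{k+2}=\cM\cR\bc^{k+1}$. Writing $\br^k:=\cR\bc^k$ and using $\cS\cM=\cQ\cM^{-1}\cM=\cQ$, I obtain $\|\bc^k-\bc^{k+1}\|_\cS^2=\|\cM\br^k\|_\cS^2=\langle\br^k|\cM^\top\cS\cM\br^k\rangle=\langle\br^k|\cM^\top\cQ\br^k\rangle$, and the same identity with $k$ replaced by $k+1$. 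Since $\cM^\top\cQ=\cM^\top\cS\cM\in\bbS_{++}$ (as $\cS\in\bbS_{++}$ and $\cM$ is invertible), $\|\cdot\|_{\cM^\top\cQ}$ is a genuine norm, and (iii) is equivalent to the assertion $\|\br^k\|_{\cM^\top\cQ}^2-\|\br^{k+1}\|_{\cM^\top\cQ}^2\ge\|\br^k-\br^{k+1}\|_\cG^2$.

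\emph{Polarize and invoke (ii).} For symmetric $\bM$ one has $\|\ba\|_\bM^2-\|\bb\|_\bM^2=2\langle\bM\ba|\ba-\bb\rangle-\|\ba-\bb\|_\bM^2$. Applying this with $\bM=\cM^\top\cQ$, $\ba=\br^k$, $\bb=\br^{k+1}$, and bounding the cross term below by (ii) (which in the present notation reads $\langle\cM^\top\cQ\br^k|\br^k-\br^{k+1}\rangle\ge\frac12\|\br^k-\br^{k+1}\|_{\cQ+\cQ^\top}^2$), I get $\|\br^k\|_{\cM^\top\cQ}^2-\|\br^{k+1}\|_{\cM^\top\cQ}^2\ge\|\br^k-\br^{k+1}\|_{\cQ+\cQ^\top}^2-\|\br^k-\br^{k+1}\|_{\cM^\top\cQ}^2$. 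Finally, the definition $\cG=\cQ+\cQ^\top-\cM^\top\cQ$ gives $\cQ+\cQ^\top=\cG+\cM^\top\cQ$, hence $\|\br^k-\br^{k+1}\|_{\cQ+\cQ^\top}^2=\|\br^k-\br^{k+1}\|_\cG^2+\|\br^k-\br^{k+1}\|_{\cM^\top\cQ}^2$; substituting, the two $\cM^\top\cQ$-terms cancel and exactly $\|\br^k-\br^{k+1}\|_\cG^2$ survives, which is (iii).

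\emph{Where the difficulty is.} There is essentially no analytic obstacle here: all the substance — maximal monotonicity of $\cA$, which enters through the residual operator $\cR$ — is already packaged inside (ii). The one step I would flag as the crux is the bookkeeping decomposition $\cQ+\cQ^\top=\cG+\cM^\top\cQ$ read off from the definition of $\cG$: it is precisely what makes the two extra $\cM^\top\cQ$-terms produced by the polarization identity cancel against the correction term coming from (ii), leaving exactly the $\cG$-seminorm. Beyond that, one only needs to record that $\cM^\top\cQ$ is symmetric positive definite (it equals $\cM^\top\cS\cM$), so the intermediate quadratic forms are legitimate norms and the polarization identity applies, and to keep the indices straight when converting $\bc^k-\bc^{k+1}$ and $\bc^{k+1}-\bc^{k+2}$ into $\cM\br^k$ and $\cM\br^{k+1}$. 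If instead one wanted an argument not relying on (ii), the extra work would be to re-derive it by unwinding the monotonicity inequality $\langle\tilde{\bc}^k-\tilde{\bc}^{k+1}|\cQ\br^k-\cQ\br^{k+1}\rangle\ge0$ for the two consecutive proximal residuals, but given (ii) this is unnecessary.
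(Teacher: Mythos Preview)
Your proof of (iii) is correct and follows essentially the same route as the paper: rewrite $\bc^k-\bc^{k+1}=\cM\cR\bc^k$, use $\cM^\top\cS\cM=\cM^\top\cQ$, apply the polarization identity, bound the cross term by (ii), and then cancel using $\cQ+\cQ^\top=\cG+\cM^\top\cQ$. The paper's proof is more compressed but the ingredients and their order are identical.
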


\begin{proof}
(i) Based on \eqref{gppa}, we have:
\begin{eqnarray} 
0 & \le &  \big \langle \cA  \bctilde^{k} - \cA \bcstar \big| 
\bctilde^k - \bcstar \big \rangle  \quad
\text{by monotonicity of $\cA$}
\nonumber \\
&=&  \big \langle \cQ (\bc^k -  \bctilde^{k}) \big| 
\bctilde^k - \bcstar \big \rangle
\quad  \text{ by \eqref{gppa} and ${\bf 0} \in \cA \bbstar $ }
\nonumber \\
&=&  \big \langle \cQ \cM^{-1} (\bc^k -  \bc^{k+1}) \big| 
\bc^k + \cM^{-1} (\bc^{k+1} - \bc^k) - \bcstar \big \rangle
\quad  \text{ by \eqref{gppa} }
\nonumber \\
&=&  \big \langle \cS( \bc^k -  \bc^{k+1} ) \big|  \bc^k - \bcstar \big \rangle - \frac{1}{2} \big \| \bc^k -  \bc^{k+1}  \big \|_{\cM^{-\top} \cS + \cS \cM^{-1} }^2 
\nonumber \\
&=& \frac{1}{2} \big\| \bc^k - \bcstar \big\|_\cS^2
- \frac{1}{2} \big\| \bc^{k+1} - \bcstar \big\|_\cS^2
- \frac{1}{2} \big \| \bc^k -  \bc^{k+1}  \big \|
_{\cM^{-\top} \cG \cM^{-1} }^2, 
\quad \text{by $\cS\in \bbS_{++}$.}
\nonumber 
\end{eqnarray}

\vskip.2cm
(ii) By \cite[Lemma 2.6]{fxue_1}, we obtain:
\[
 \big \langle     \bc^k - \bc^{k+1} \big|  
 \cQ \cR \bc^k - \cQ \cR \bc^{k+1}  \big \rangle  \ge 
\big\|   \cR \bc^k - \cR  \bc^{k+1}  \big\|_\cQ^2
= \frac{1}{2} \big\|   \cR \bc^k - \cR  \bc^{k+1}  \big\|_{\cQ + \cQ^\top}^2.
\]
Then, (ii) follows from  $\bc^k - \bc^{k+1} = \cM\cR \bc^k$.

\vskip.2cm
(iii) We develop:
\begin{eqnarray}  \label{dd}
& &  \big \| \bc^k - \bc^{k+1} \big\|_\cS^2 - 
\big\| \bc^{k+1} - \bc^{k+2} \big\|_\cS^2   
\nonumber \\
& = &  \big \|\cM \cR \bc^k \big\|_\cS^2 - 
\big\| \cM \cR\bc^{k+1} \big\|_\cS^2  
\quad \text{by \eqref{gppa} } 
\nonumber \\
&=& 2 \big \langle  \cM^\top \cS \cM  \cR \bb^k \big| 
 \cR \bc^k - \cR \bc^{k+1}   \big \rangle   
- \big\| \cR\bc^k - \cR\bc^{k+1} \big\|^2
_{\cM^\top   \cS \cM } 
\nonumber \\ 
& \ge &  \big\|   \cR \bc^k -  \cR \bc^{k+1}  \big\|_\cG^2,  \qquad 
\text{by Lemma \ref{l_gppa}--(ii) and $\cQ  =\cS \cM$.}   
\nonumber 
\end{eqnarray}
%\phantom{s}
\end{proof}
\begin{remark}
Lemma \ref{l_gppa}--(i) can be found in \cite[Theorem 1]{hbs_yxm_2018}, \cite[Theorem 4.1]{hbs_yxm_2015} and \cite[Theorem 3.2]{hbs_jmiv_2017}. The item (ii) is same as \cite[Lemma 3]{hbs_yxm_2018}, \cite[Lemma 5.3]{hbs_yxm_2015} and \cite[Lemma 5.4]{hbs_jmiv_2017}. 
The item (iii) is a restatement of \cite[Theorem 5]{hbs_yxm_2018} and \cite[Theorem 5.1]{hbs_yxm_2015}. The proof presented here outlines the key ingredients only. Refer to \cite[Lemma 5.2]{fxue_2} for more details.
\end{remark}

Then, the convergence properties of \eqref{gppa} are  given as follows. 
\begin{theorem} [Convergence in terms of metric distance] \label{t_gppa}
Under the notations and assumptions of Lemma \ref{l_gppa}, then the following hold.

{\rm (i) [Basic convergence]} There exists $\bcstar\in \zer \cA$, such that $\bc^k \rightarrow \bcstar$, as $k \rightarrow \infty$. 

{\rm (ii) [Asymptotic regularity]}  $\|  \bc^{k } - \bc^{k+1 } \|_\cS$ has the pointwise  convergence rate of $\cO(1/\sqrt{k})$, i.e.,
\[
\big\| \bc^{k+1 } - \bc^{k }  \big\|_\cS 
\le \frac{1}{ \sqrt{k+1 } } \sqrt{\frac
{\lambda_{\max}(\cS )} 
{\lambda_{\min}(\cM^{-\top} \cG \cM^{-1})}  }  
\big\|\bc^{0} -\bcstar \big\|_\cS,
\quad \forall k \in \N,
\]
 where $\lambda_{\max}$ and $\lambda_{\max}$ denote the largest and smallest eigenvalues of a matrix. 
\end{theorem}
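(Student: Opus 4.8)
The plan is to establish the two claims of Theorem~\ref{t_gppa} separately, both leaning on the three inequalities already collected in Lemma~\ref{l_gppa}.

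\textbf{Part (i): Basic convergence.} First I would observe that Lemma~\ref{l_gppa}--(i) says the sequence $\{\|\bc^k - \bcstar\|_\cS\}_{k\in\N}$ is nonincreasing and hence convergent, for \emph{every} $\bcstar \in \zer\cA$; in particular $\{\bc^k\}$ is bounded because $\cS \in \bbS_{++}$. Summing the telescoping inequality over $k$ shows $\sum_k \|\bc^k - \bc^{k+1}\|^2_{\cM^{-\top}\cG\cM^{-1}} < \infty$, and since $\cG \in \bbS_{++}$ and $\cM$ is invertible this forces $\bc^k - \bc^{k+1} \to 0$, equivalently $\cR\bc^k \to 0$ (using $\bc^k - \bc^{k+1} = \cM\cR\bc^k$ from \eqref{gppa} and invertibility of $\cM$). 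Next, take any convergent subsequence $\bc^{k_j} \to \bar\bc$; I would show $\bar\bc \in \zer\cA$. From the proximal step, $\cQ(\bc^{k_j} - \bctilde^{k_j}) \in \cA\bctilde^{k_j}$, and $\bctilde^{k_j} = \bc^{k_j} - \cM^{-1}(\bc^{k_j} - \bc^{k_j+1}) \to \bar\bc$ as well (since the increment vanishes), while the left-hand side $\cQ\cM^{-1}(\bc^{k_j}-\bc^{k_j+1}) \to 0$; maximal monotonicity of $\cA$ (its graph is closed in the relevant topology) then yields $\mathbf{0} \in \cA\bar\bc$. Finally I would upgrade subsequential convergence to full convergence by the standard Opial-type argument: $\|\bc^k - \bar\bc\|_\cS$ converges (by part (i) of the lemma applied with $\bcstar = \bar\bc$), and since one subsequence tends to $0$, the whole sequence does, so $\bc^k \to \bar\bc =: \bcstar$.

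\textbf{Part (ii): Asymptotic regularity rate.} The key is that Lemma~\ref{l_gppa}--(iii) makes the nonnegative quantity $a_k := \|\bc^k - \bc^{k+1}\|_\cS^2$ \emph{monotonically nonincreasing}, because the right-hand side $\|\cR\bc^k - \cR\bc^{k+1}\|_\cG^2 \ge 0$. Combined with the summability extracted in Part~(i) — more precisely, from Lemma~\ref{l_gppa}--(i) one gets $\sum_{k=0}^{\infty} \|\bc^k - \bc^{k+1}\|^2_{\cM^{-\top}\cG\cM^{-1}} \le \|\bc^0 - \bcstar\|_\cS^2$ — I would bound $a_k$ below by a multiple of $\|\bc^k-\bc^{k+1}\|^2_{\cM^{-\top}\cG\cM^{-1}}$ via $a_k = \|\bc^k - \bc^{k+1}\|_\cS^2 \le \lambda_{\max}(\cS)\,\|\bc^k-\bc^{k+1}\|^2 \le \frac{\lambda_{\max}(\cS)}{\lambda_{\min}(\cM^{-\top}\cG\cM^{-1})}\,\|\bc^k-\bc^{k+1}\|^2_{\cM^{-\top}\cG\cM^{-1}}$. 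Since $\{a_k\}$ is nonincreasing, $(k+1)\,a_k \le \sum_{j=0}^{k} a_j \le \frac{\lambda_{\max}(\cS)}{\lambda_{\min}(\cM^{-\top}\cG\cM^{-1})}\sum_{j=0}^{\infty}\|\bc^j-\bc^{j+1}\|^2_{\cM^{-\top}\cG\cM^{-1}} \le \frac{\lambda_{\max}(\cS)}{\lambda_{\min}(\cM^{-\top}\cG\cM^{-1})}\,\|\bc^0-\bcstar\|_\cS^2$; taking square roots and dividing by $\sqrt{k+1}$ gives exactly the stated bound.

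\textbf{Main obstacle.} The routine parts are the telescoping and norm-equivalence estimates; the delicate point is the closedness/limit argument in Part~(i) — ensuring that $\cQ(\bc^{k_j}-\bctilde^{k_j}) \in \cA\bctilde^{k_j}$ passes to the limit. This is where maximal monotonicity of $\cA$ is essential (mere monotonicity would not guarantee the graph is closed), and one must check that $\cQ$ being a fixed bounded linear metric does not interfere — it does not, since $\cQ(\bc^{k_j}-\bctilde^{k_j}) = \cS(\bc^{k_j}-\bc^{k_j+1}) \to 0$ by continuity. A secondary subtlety is that all limits are taken in the \emph{same} finite-dimensional space $\R^n$, so weak and strong convergence coincide and no demiclosedness machinery beyond the closed-graph property of maximally monotone operators is needed; I would state this explicitly to keep the argument clean.
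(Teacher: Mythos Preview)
Your proposal is correct and follows essentially the same approach as the paper: Part~(i) is exactly what the paper compresses into ``invoking Opial's lemma'' (you have spelled out the Fej\'er-monotonicity, vanishing increments, cluster-point-in-$\zer\cA$, and uniqueness-of-limit steps), and Part~(ii) matches the paper's argument of converting the $\cM^{-\top}\cG\cM^{-1}$-norm term in Lemma~\ref{l_gppa}--(i) to the $\cS$-norm via eigenvalue bounds, telescoping, and using the monotonicity from Lemma~\ref{l_gppa}--(iii) to extract the $1/\sqrt{k+1}$ rate.
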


\begin{proof} 
(i)  The basic convergence is established based on Lemma \ref{l_gppa}--(i), invoking Opial's lemma \cite[Lemma 2.47]{plc_book}.

\vskip.2cm 
(ii) In view  of Lemma \ref{l_gppa}--(i), we have:
\be \label{xx1}
\big\| \bc^{i+1} -\bc^\star \big\|_\cS^2 \le 
\big\| \bc^{i} - \bc^\star \big\|_\cS^2 - \frac{\lambda_{\min}( \cM^{-\top} \cG \cM^{-1})}
{\lambda_{\max}(\cS )} \big\| \bc^{i } - \bc^{i+1} \big\|_\cS^2.
\ee

Finally, (ii) is obtained, by summing up \eqref{xx1} from $i=0$ to $k$ and noting that the sequence  $\{\| \bc^{i} - \bc^{i+1} \|_\cS\}_{i\in\N} $ is non-increasing (by Lemma \ref{l_gppa}--(iii)).
\end{proof}

\begin{remark}
Refer to \cite[Theorem 5.3]{fxue_2} for more details.
The non-ergodic rate of asymptotic regularity (ii) has also been established in \cite[Theorem 6]{hbs_yxm_2018}, \cite[Theorem 6.1]{hbs_yxm_2015} and \cite[Theorem 5.5]{hbs_jmiv_2017}. 
\end{remark}

In particular, if $\cM=\cI$, the scheme \eqref{gppa} reduces to a standard PPA: ${\bf 0} \in \cA \bc^{k+1} +\cQ(\bc^{k+1} - \bc^k)$, which can be rewritten as
\be \label{ppa}
\bc^{k+1} := (\cA+\cQ)^{-1} \cQ \bc^k ,
\ee
whose convergence is given below.

\begin{corollary} [Convergence of standard PPA] \label{c_gppa}
Given the scheme \eqref{ppa} with maximally monotone $\cA$ and metric $\cQ \in \bbS_{++}$, the following hold.

{\rm (i) [Basic convergence]} There exists $\bcstar\in \zer \cA$, such that $\bc^k \rightarrow \bcstar$, as $k \rightarrow \infty$. 

{\rm (ii) [Asymptotic regularity]}  $\|  \bc^{k } - \bc^{k+1 } \|_\cQ$ has a pointwise  convergence rate of $\cO(1/\sqrt{k})$, i.e.
\[
\big\| \bc^{k+1 } - \bc^{k }  \big\|_\cQ 
\le \frac{1}{ \sqrt{k+1 } }   
\big\|\bc^{0} -\bcstar \big\|_\cQ,
\quad \forall k \in \N,
\]

{\rm (iii) [Resolvent]} The scheme \eqref{ppa} can be rewritten as a resolvent form:
\[
\bc^{k+1} :=  (\cI + \cQ^{-1} \circ \cA)^{-1} \bc^k = 
\cQ^{-\frac{1}{2} } \big(\cI +  \cQ^{-\frac{1}{2} } \circ \cA \circ \cQ^{-\frac{1}{2} } \big)^{-1}   \cQ^{\frac{1}{2} }   \bc^k.
\]
\end{corollary}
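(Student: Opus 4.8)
The plan is to specialize the general results of Theorem \ref{t_gppa} to the case $\cM = \cI$ and then verify the resolvent identities by direct algebraic manipulation. First I would check that the standing hypotheses of Lemma \ref{l_gppa} are met when $\cM = \cI$: with $\cM = \cI$ we have $\cS = \cQ \cM^{-1} = \cQ$ and $\cG = \cQ + \cQ^\top - \cM^\top \cQ = \cQ^\top = \cQ$ (using that $\cQ \in \bbS_{++}$ is symmetric), so both $\cS$ and $\cG$ equal $\cQ \in \bbS_{++}$, and the lemma applies. Parts (i) and (ii) then follow immediately from Theorem \ref{t_gppa}: basic convergence is verbatim Theorem \ref{t_gppa}--(i), and for asymptotic regularity one substitutes $\cS = \cG = \cQ$, $\cM = \cI$ into the rate bound, noting that $\lambda_{\max}(\cQ)/\lambda_{\min}(\cM^{-\top}\cG\cM^{-1}) = \lambda_{\max}(\cQ)/\lambda_{\min}(\cQ)$ — wait, this would give a condition-number factor, not $1$. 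So I would instead argue directly: from $\cS = \cG = \cQ$, inequality \eqref{xx1} becomes $\|\bc^{i+1}-\bcstar\|_\cQ^2 \le \|\bc^{i}-\bcstar\|_\cQ^2 - \|\bc^{i}-\bc^{i+1}\|_\cQ^2$, and the monotonicity of $\{\|\bc^i - \bc^{i+1}\|_\cQ\}$ from Lemma \ref{l_gppa}--(iii) (again with $\cG = \cQ$) gives $(k+1)\|\bc^{k+1}-\bc^k\|_\cQ^2 \le \sum_{i=0}^k \|\bc^i - \bc^{i+1}\|_\cQ^2 \le \|\bc^0 - \bcstar\|_\cQ^2$, which is exactly the stated bound without any eigenvalue ratio.

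For part (iii), I would start from the defining inclusion ${\bf 0} \in \cA\bc^{k+1} + \cQ(\bc^{k+1} - \bc^k)$ and manipulate formally. Dividing through by $\cQ$ (legitimate since $\cQ \in \bbS_{++}$ is invertible) yields ${\bf 0} \in \cQ^{-1}\cA\bc^{k+1} + \bc^{k+1} - \bc^k$, i.e. $\bc^k \in (\cI + \cQ^{-1}\circ\cA)\bc^{k+1}$, which is the first claimed form $\bc^{k+1} = (\cI + \cQ^{-1}\circ\cA)^{-1}\bc^k$. For the symmetrized form, I would introduce the change of variable $\bc = \cQ^{-1/2}\bz$ (with $\cQ^{1/2}$ the unique symmetric positive-definite square root) and rewrite the inclusion ${\bf 0} \in \cA\bc^{k+1} + \cQ\bc^{k+1} - \cQ\bc^k$ as ${\bf 0} \in \cA\cQ^{-1/2}\bz^{k+1} + \cQ^{1/2}\bz^{k+1} - \cQ^{1/2}\bz^k$; left-multiplying by $\cQ^{-1/2}$ gives ${\bf 0} \in \cQ^{-1/2}\cA\cQ^{-1/2}\bz^{k+1} + \bz^{k+1} - \bz^k$, hence $\bz^{k+1} = (\cI + \cQ^{-1/2}\circ\cA\circ\cQ^{-1/2})^{-1}\bz^k$, and translating back via $\bz = \cQ^{1/2}\bc$ produces the stated expression $\bc^{k+1} = \cQ^{-1/2}(\cI + \cQ^{-1/2}\circ\cA\circ\cQ^{-1/2})^{-1}\cQ^{1/2}\bc^k$.

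The only genuine subtlety — and the step I would be most careful about — is well-definedness and single-valuedness of the resolvents: the expressions $(\cA + \cQ)^{-1}\cQ$ and $(\cI + \cQ^{-1/2}\circ\cA\circ\cQ^{-1/2})^{-1}$ must be everywhere-defined single-valued maps for the iteration and its rewriting to make sense. This is where maximal monotonicity of $\cA$ is essential: $\cQ^{-1/2}\circ\cA\circ\cQ^{-1/2}$ is maximally monotone (a congruence of a maximally monotone operator by the invertible self-adjoint map $\cQ^{-1/2}$), so by Minty's theorem its resolvent $(\cI + \cQ^{-1/2}\circ\cA\circ\cQ^{-1/2})^{-1}$ has full domain and is single-valued (indeed firmly nonexpansive); equivalently, $(\cA+\cQ)^{-1}\cQ$ is well-defined because $\cA + \cQ$ is maximally monotone and strongly monotone. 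I would state this once and note that the three displayed formulas in (iii) are then merely rewritings of one another under the invertible linear substitutions above, so no separate convergence argument is needed beyond parts (i)–(ii).
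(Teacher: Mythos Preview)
Your proposal is correct and follows essentially the same route as the paper: specialize Theorem~\ref{t_gppa} with $\cM=\cI$ for (i)--(ii), and handle (iii) by direct algebraic manipulation (the paper simply cites an external lemma for (iii)). In fact your treatment of (ii) is sharper than the paper's one-line ``follows from Theorem~\ref{t_gppa}'': you correctly notice that the eigenvalue-ratio bound of Theorem~\ref{t_gppa}--(ii) would be too weak, and you recover the clean constant by going back to Lemma~\ref{l_gppa}--(i), which for $\cM=\cI$ reads $\|\bc^{i+1}-\bcstar\|_\cQ^2 \le \|\bc^{i}-\bcstar\|_\cQ^2 - \|\bc^{i}-\bc^{i+1}\|_\cQ^2$ directly. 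Your well-definedness remark via Minty's theorem for the resolvent in (iii) is also a useful addition that the paper omits.
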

\begin{proof}
(i) and (ii) follow from Theorem \ref{t_gppa}.

(iii) \cite[Lemma 2.1-(iii)]{fxue_2}.
\end{proof}
 
All the results presented in Sect. \ref{sec_gppa} require the associated metrics $\cQ$ (or  $\cS,\cG$) to be {\it strictly} positive definite. However, based on a recent analysis of \cite{bredies_preprint}, this condition can be sometimes loosened to positive {\it semi-}definite in the applications to operator splitting algorithms, which leads to some interesting reductions by removing redundant variables. See Sect. \ref{sec_red_lag} and  \ref{sec_red_pds} for detailed discussions.

\subsection{An extension of  Moreau's decomposition identity}
The following result extends the classical Moreau's decomposition identity (see, for instance, \cite[Eq.(2.21)]{plc}) to arbitrary linear operator $\bA$, and  links  the proximity operator of the infimal postcomposition of $f$ by $\bA$ to that of the conjugate $f^*$. The the notion of `infimal postcomposition' was recently studied in \cite{arias_parallel} in details.

\begin{lemma} \label{l_duality}
Given a proper, l.s.c. and convex function $f: \R^N \mapsto
 \R\cup \{+\infty\}$ and arbitrary matrix $\bA: \R^N \mapsto \R^M$, the following holds: 
\[
\prox_{\bA \triangleright f} + \prox_{f^* \circ \bA^\top}
=\I_M,
\]
where $\bA \triangleright f$ denotes the infimal postcomposition of $f$ by $\bA$, defined as $\bA \triangleright f: \R^M \mapsto \R: \bt \mapsto  \min_{\bA \bx=\bt } f(\bx) $. 
\end{lemma}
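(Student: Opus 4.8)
The plan is to reduce the identity to the classical Moreau decomposition by identifying the correct "ground metric" and conjugate. First I would recall that the infimal postcomposition $\bA \triangleright f$ is, by definition, the function whose Fenchel conjugate is $(\bA \triangleright f)^* = f^* \circ \bA^\top$; this is a standard fact (the conjugate of an inf-convolution-type operation), and it already explains why the two proximity operators in the statement ought to be complementary. Once we have $(\bA \triangleright f)^* = f^* \circ \bA^\top$, we are exactly in the setting of the ordinary Moreau identity $\prox_{h} + \prox_{h^*} = \I_M$ applied to $h = \bA \triangleright f$ on $\R^M$, which gives $\prox_{\bA \triangleright f} + \prox_{(\bA \triangleright f)^*} = \I_M$, i.e. the claim.

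The one subtlety is that Moreau's identity requires $h = \bA \triangleright f$ to be proper, l.s.c. and convex, whereas the infimal postcomposition need not be l.s.c. in general (the infimum over $\{\bx : \bA\bx = \bt\}$ need not be attained, and the function may fail to be closed). So the careful route is: (1) show $\bA \triangleright f$ is convex (immediate from convexity of $f$ and linearity of the constraint) and that its conjugate is $f^* \circ \bA^\top$ by a direct computation, $(\bA\triangleright f)^*(\bu) = \sup_\bt \langle \bu | \bt\rangle - \min_{\bA\bx=\bt} f(\bx) = \sup_\bx \langle \bu | \bA\bx\rangle - f(\bx) = f^*(\bA^\top \bu)$; (2) note $f^* \circ \bA^\top$ is proper, l.s.c., convex (composition of the closed convex $f^*$ with a continuous linear map, assuming properness, which follows from $f$ being proper with the appropriate qualification); (3) conclude that the biconjugate $(\bA \triangleright f)^{**} = (f^* \circ \bA^\top)^*$ is the l.s.c. convex hull of $\bA \triangleright f$, and — crucially — that the proximity operator only ever sees this l.s.c. hull, because $\prox$ of a function and of its closed convex hull coincide whenever the infimum defining $\prox$ is attained in a common way; more cleanly, one can simply \emph{define} the left-hand $\prox_{\bA \triangleright f}$ via the minimization $\bt \mapsto \arg\min_\bs \,(\bA \triangleright f)(\bs) + \tfrac12\|\bs - \bt\|^2 = \arg\min_{\bs,\bx:\bA\bx=\bs}\, f(\bx) + \tfrac12\|\bs-\bt\|^2$, which is well posed under the paper's standing assumptions.

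An alternative, arguably cleaner, derivation avoids biconjugation entirely: write out the minimization defining $\prox_{\bA\triangleright f}(\bt)$ jointly in $(\bs,\bx)$, eliminate $\bs$ using $\bs = \bA\bx$, and apply the generalized Moreau/Fenchel–Rockafellar duality to the resulting problem $\min_\bx f(\bx) + \tfrac12\|\bA\bx - \bt\|^2$, whose optimality system links the primal solution to the dual variable $\bA^\top\buhat$ with $\buhat = \prox_{f^*\circ\bA^\top}(\bt)$; adding the primal image $\prox_{\bA\triangleright f}(\bt) = \bA\bxhat$ and the dual residual then telescopes to $\bt$. I expect the main obstacle to be purely technical rather than conceptual: making sure the constraint qualification (properness of $\bA\triangleright f$, equivalently $\bt \in \bA\,\dom f$ nonempty, and closedness issues when $\ran \bA$ is a proper subspace) is handled so that the $\arg\min$ is genuinely attained and single-valued; once that is in place the conjugacy computation $(\bA\triangleright f)^* = f^*\circ\bA^\top$ does all the work and the identity drops out of the classical Moreau decomposition.
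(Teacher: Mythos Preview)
Your proposal is correct, and your primary route is actually cleaner than the paper's. You reduce the statement directly to the classical Moreau identity $\prox_h + \prox_{h^*} = \I_M$ by invoking the conjugacy formula $(\bA\triangleright f)^* = f^*\circ\bA^\top$; once that identity is stated, the lemma is immediate. The paper instead establishes the result by hand: it introduces the constraint $\bt = \bA\bx$, writes $\prox_{\bA\triangleright f}(\bu)$ as the minimizer of $F(\bt)+\tfrac12\|\bt-\bu\|^2$ with $F=\bA\triangleright f$, passes to the saddle-point formulation $\min_\bt\max_\bs \langle\bs|\bt\rangle - F^*(\bs)+\tfrac12\|\bt-\bu\|^2$, swaps min and max, and reads off $\bt^\star = \bu - \bs^\star$ with $\bs^\star=\prox_{F^*}(\bu)$, only at the very end invoking $F^*=f^*\circ\bA^\top$. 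In other words, the paper essentially re-derives Moreau's decomposition in this specific instance via Fenchel duality, which is precisely what you sketch as your ``alternative, arguably cleaner'' derivation. Your main approach buys brevity and makes the structure transparent (it is literally Moreau plus a known conjugate formula); the paper's approach is more self-contained and sidesteps explicitly checking that $\bA\triangleright f$ is closed, since it works with the concrete joint minimization throughout. You are also right to flag the l.s.c.\ issue for $\bA\triangleright f$ as the only genuine technical wrinkle; the paper does not address it explicitly either.
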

\begin{proof}
First, incorporating a hard constraint of $\bt=\bA  \bx$  \cite[Sect. 3]{self_eq}, we have:
\begin{eqnarray} \label{e4}
\min_\bx  f(\bx) + \frac{1}{2} \|\bA\bx-\bu\|^2 
&=&  \min_{\bx,\bt}  f(\bx) + 
\frac{1}{2} \|\bt-\bu\|^2 + \iota_{ \{\bx: \bA \bx = \bt \} }(\bt)
\nonumber \\
&=&  \min_{\bt}  F(\bt) + 
\frac{1}{2} \|\bt-\bu\|^2 ,
\end{eqnarray}
where $\iota_C$ is an indicator function of a set $C$, the function $F(\bt) := \min_\bx f(\bx) +\iota_{ \{\bx: \bA \bx=\bt\} }(\bx) 
= \min_{\bA \bx=\bt } f(\bx)  $ is the so-called infimal postcomposition of $f$ by $\bA$, simply denoted as $F =\bA \triangleright f$ \cite[Definition 12.34]{plc_book}. \eqref{e4} implies that $\bt^\star = \prox_{F}(\bu) = \bA \bx^\star$, where $\bx^\star = \arg \min_\bx  f(\bx) + \frac{1}{2} \|\bA\bx-\bu\|^2 $.

Then,  by Fenchel duality, the above   is equivalent to a saddle-point problem:
\[
\min_\bt \max_\bs  \langle \bs | \bt \rangle  - F^*(\bs) + \frac{1}{2}  \|\bt  - \bu\|^2 .
\]
Exchanging the order of min and max, we have:
\[
 \max_\bs \min_\bt   - F^*(\bs) + \frac{1}{2}  \|\bt  - \bu +\bs\|^2 - \frac{1}{2}\|\bs\|^2 +\langle \bu | \bs\rangle,
 \]
which yields that $\bt^\star    =\bu - \bs^\star$, where
$\bs^\star  =  \arg \min_\bs  F^*(\bs) +\frac{1}{2}  \|\bs - \bu \|^2 = \prox_{F^*} (\bu)$. Finally, the proof is completed by noting that $F^* = f^* \circ \bA^\top$ \cite{self_eq}.
\end{proof}

\section{Operator splitting based on Lagrangian}
\subsection{The Lagrangian schemes and their PPA interpretations}
\label{sec_lag_algo}
First, we consider the Lagrangian of \eqref{primal} \cite[Eq.(13)]{fxue_gopt}:
\be \label{lag}
\cL (\bx,\ba,\bp)  := 
f(\bx) + g(\ba)  + \bp^\top (\bA \bx - \ba),
\ee 
or generalized augmented Lagrangian:
\be \label{aug_lag}
\cL_\bXi (\bx,\ba,\bp)  := 
f(\bx) + g(\ba)  + \bp^\top (\bA \bx - \ba)
+\frac{1}{2} \big\| \bA \bx - \ba\big\|_{\bXi}^2,
\ee 
which extends the standard augmented Lagrangian \cite[Eq.(3)]{fxue_gopt} from the scalar penalty parameter $\gamma$ to the matrix metric $\bXi$.

Then, similar to \cite{pesquet_2016,frankel_2015,vu_2015,alves_2018},  defining the proximal metrics by  $\bM: \R^N \mapsto \R^N$,  $\bOmega: \R^M \mapsto \R^M$,  $\bXi: \R^M \mapsto \R^M$, the alternating optimization of \eqref{lag} or \eqref{aug_lag} yields the algorithms listed in Table 1. \texttt{LAG-I,II,V,VI} and \texttt{VII} can be found in \cite[Sect. 3 and 4]{fxue_gopt}, and are extended to general proximal metrics here.  Table 2 shows the PPA reinterpretations of the  schemes. One can check the PPA fitting by the similar procedure with  \cite{bredies_2017,mafeng_2018,bai_2018,hbs_2018},  verify the convergence condition for each algorithm (shown in Table 3) by computing the corresponding $\cS$ and $\cG$ by Theorem \ref{t_gppa}, and further write down the specific convergence property of asymptotic regularity, which are omitted here.  Also note that:
\begin{itemize}
\item \texttt{LAG-I}  and  \texttt{LAG-II}  correspond to symmetric $\cQ$ (without relaxation); 

\item   \texttt{LAG-III} and  \texttt{LAG-IV} correspond to upper triangular $\cQ$; 

\item  \texttt{LAG-V} and  \texttt{LAG-VI} correspond to lower triangular $\cQ$; 

\item  \texttt{LAG-VII}  corresponds to skew-symmetric  $\cQ$. 
\end{itemize}

\begin{table} [h!] \label{table_lag_algo}
\centering
\caption{The proposed Lagrangian-based algorithms }
\vspace{-.5em}
\hspace*{-.2cm}
\resizebox{.97\columnwidth}{!} {
\begin{tabular}{|l|l|} 
    \Xhline{1.2pt}  
    name  & iterative scheme \\
\hline 
 \tabincell{l}{ \texttt{LAG-I} \\ \cite[Eq.(14)]{fxue_gopt} } 
  & $ \left\lfloor  \begin{array}{lll}
\bx^{k+1} &: = & \prox_{f}^\bM \big( \bx^k -\bM^{-1} \bA^\top \bp^k \big) \\
  \ba^{k+1} &: =  & \prox_{g}^{\bOmega } (\ba^k + \bOmega^{-1} \bp^k  )    \\ 
\bp^{k+1}  & := & \bp^k + \bXi  
 \big(  \bA (2 \bx^{k+1}  -\bx^k ) - (2\ba^ {k+1} -\ba^k)  \big)    \end{array} \right. $ \\  
 \hline
 \tabincell{l}{ \texttt{LAG-II} \\ \cite[Eq.(24)]{fxue_gopt} } 
  & $ \left\lfloor  \begin{array}{lll}
\bx^{k+1} & := &  \prox_{f}^{ \bM+ \bA^\top \bXi \bA }\big( ( \bM + \bA^\top\bXi \bA)^{-1} ( 
\bM \bx^k  +\bA^\top  \bXi \ba^{k} - \bA^\top \bp^k  ) \big)  \\
\bp^{k+1}  & = & \bp^k + \bXi  \big(  \bA  \bx^{k+1} - \ba^ {k}  \big)  \\ 
  \ba^{k+1} & =  & \prox_{g}^{\bOmega } \big(   \ba^k +   \bOmega^{-1}  (2 \bp^{k+1} - \bp^k) \big)  
 \end{array} \right. $ \\
 \hline
 \texttt{LAG-III} & $ \left\lfloor  \begin{array}{lll}
  \ba^{k+1}   &: =  & \prox_{g}^{\bOmega} ( \ba^k + \bOmega^{-1}  \bp^k  )    \\ 
\bx^{k+1} &: = & \prox_f^{\bM+\bA^\top \bXi \bA} \big( (\bM+\bA^\top \bXi \bA)^{-1}
(\bM \bx^k - \bA^\top \bp^k +\bA^\top \bXi \ba^{k+1})  \big)  \\ 
\bp^{k+1}  & := & \bp^k + \bXi \big(  \bA \bx^{k+1} + \ba^{k}  -2\ba^{k+1} \big) 
 \end{array} \right. $\\
 \hline
 \texttt{LAG-IV}  & $ \left\lfloor  \begin{array}{lll}
\bx^{k+1} &: = & \prox_f^{\bM} \big(   \bx^k - \bM^{-1} \bA^\top   \bp^{k} \big)  \\ 
  \ba^{k+1}   &: =  & \prox_{g}^{\bOmega+\bXi} \big(
  (\bOmega+\bXi )^{-1} (\bOmega \ba^k + \bXi \bA \bx^{k+1} +\bp^k ) \big)    \\ 
\bp^{k+1}  & := & \bp^k + \bXi \big(  \bA (2\bx^{k+1}
-\bx^k)    - \ba^{k+1} \big) 
 \end{array} \right. $ \\
 \hline
  \tabincell{l}{ \texttt{LAG-V} \\ \cite[Eq.(19)]{fxue_gopt} } 
 & $  \left\lfloor  \begin{array}{lll}
\bx^{k+1} & := &  \prox_{f}^{ \bM+ \bA^\top \bXi \bA }\big( ( \bM + \bA^\top\bXi \bA)^{-1} ( 
\bM \bx^k  +\bA^\top  \bXi  \ba^{k} - \bA^\top \bp^k  ) \big)  \\
  \ba^{k+1}   &: =  & \prox_{g}^{\bOmega+\bXi } \big(
  (\bOmega+\bXi )^{-1} (\bOmega \ba^k + \bXi  \bA \bx^{k} +\bp^k ) \big)    \\ 
\bp^{k+1}  & = & \bp^k + \bXi \big(  \bA \bx^{k+1} - \ba^ {k+1}  \big)   
 \end{array} \right. $  \\
 \hline
  \tabincell{l}{ \texttt{LAG-VI} \\ \cite[Eq.(20)]{fxue_gopt} } 
  & $  \left\lfloor  \begin{array}{lll}
\bx^{k+1} & := &  \prox_{f}^{ \bM+ \bA^\top \bXi \bA }\big( ( \bM + \bA^\top\bXi \bA)^{-1} ( 
\bM \bx^k  +\bA^\top  \bXi \ba^{k} - \bA^\top \bp^k  ) \big)  \\
  \ba^{k+1}   &: =  & \prox_{g}^{\bOmega+\bXi } \big(
  (\bOmega+\bXi )^{-1} (\bOmega \ba^k + \bXi  \bA \bx^{k+1} +\bp^k ) \big)    \\ 
  \bp^{k+1}  & := & \bp^k + \bXi  \big(  \bA \bx^{k+1} -
 \ba^ {k+1}  \big)  
 \end{array} \right.  $  \\
 \hline
  \tabincell{l}{ \texttt{LAG-VII} \\ \cite[Eq.(18)]{fxue_gopt} } 
 & $  \left\lfloor  \begin{array}{lll}
\bxtilde^{k } &: = & \prox_f^{\bM} \big(   \bx^k - \bM^{-1} \bA^\top   \bp^{k} \big)  \\ 
  \batilde^{k}   &: =  & \prox_{g}^{\bOmega} \big(
 \ba^k + \bOmega^{-1}  \bp^k  \big)    \\ 
\bx^{k+1} &:= &  \bxtilde^k - \bM^{-1}\bA^\top \bXi (\bA\bx^k - \ba^k) \\
\ba^{k+1} &:= &  \batilde^k + \bOmega^{-1}  \bXi (\bA\bx^k - \ba^k) \\ 
\bp^{k+1}  & := & \bp^k + \bXi \big(  \bA \bxtilde^{k }
   - \batilde^{k} \big)  
 \end{array} \right. $  \\
    \Xhline{1.2pt}  
     \end{tabular}  } 
\vskip 0.5em
\end{table}

\begin{table} [h!] \label{table_lag}
\centering
\caption{The PPA reinterpretations of the Lagrangian-based schemes }
\vspace{-.5em}
\hspace*{-.2cm}
\resizebox{.97\columnwidth}{!} {
\begin{tabular}{||c||c|c|c|c||} 
    \Xhline{1.2pt}  
schemes & $\bc$ & $\cA$ & $\cQ$ & $\cM$  \\
\hline
 \texttt{LAG-I}  &
\multirow{3}{*} {  \tabincell{c}{ \\ \\  \\ \\ \\ \\ 
\\ \\ \\  $\begin{bmatrix}
\bx \\  \ba \\ \bp \end{bmatrix} $ } } & 
\multirow{3}{*}{ \tabincell{c}{ \\ \\  \\ \\ \\ \\ 
\\ \\ \\   $\begin{bmatrix}
\partial f & \bf 0  & \bA^\top \\
\bf 0 & \partial g  & -\I_{M}   \\
-\bA  & \I_{M} & \bf 0   
\end{bmatrix} $} } &  
$ \begin{bmatrix}
\bM   & \bf 0 &  -\bA^\top   \\
\bf 0 & \bOmega  &  \I_{M}   \\
-\bA &  \I_{M}  &  \bXi^{-1}     
\end{bmatrix} $   &  
\multirow{2}{*}{ \tabincell{c}{ \\ \\ $\I_{2M+N} $ } }  \\ 
 \cline{1-1}   \cline{4-4}   
 \texttt{LAG-II}  & &  &  
$ \begin{bmatrix}
\bM   & \bf 0 &   \bf 0   \\
\bf 0 & \bOmega   &  -\I_M  \\
\bf 0 &  -\I_M  &  \bXi^{-1}     
\end{bmatrix}   $ &    \\ 
 \cline{1-1}   \cline{4-5} 
 \texttt{LAG-III}  & &  &  
$ \begin{bmatrix}
\bM   & \bf 0 &  \bf 0   \\
\bf 0 & \bOmega  &  \I_{M}   \\
 \bf 0 &  \bf 0  &  \bXi^{-1} 
\end{bmatrix} $   &  
$ \begin{bmatrix}
\I_N   & \bf 0 &  \bf 0    \\
\bf 0 & \I_M  &  \bf 0   \\
\bf 0  & -\bXi  &  \I_M     
\end{bmatrix} $    \\ 
 \cline{1-1}   \cline{4-5} 
 \texttt{LAG-IV}  & &  &  
$ \begin{bmatrix}
\bM   & \bf 0 &  -\bA^\top   \\
\bf 0 & \bOmega  &  \bf 0  \\
 \bf 0 &  \bf 0  &  \bXi^{-1} 
\end{bmatrix} $   &  
$ \begin{bmatrix}
\I_N   & \bf 0 &  \bf 0    \\
\bf 0 & \I_M  &  \bf 0   \\
\bXi \bA  &  \bf 0  &  \I_M     
\end{bmatrix} $   \\ 
 \cline{1-1}   \cline{4-5} 
 \texttt{LAG-V}  & &  &  
$ \begin{bmatrix}
\bM+\bA^\top \bXi \bA   & \bf 0 &   \bf 0   \\
\bf 0 & \bOmega +\bXi   &  \bf 0  \\
\bA &  -\I_M  &  \bXi^{-1}     
\end{bmatrix} $ & 
$ \begin{bmatrix}
\I_N   & \bf 0 & \bf 0  \\
\bf 0 & \I_M  &  \bf 0 \\
\bXi \bA &  -\bXi  &  \I_M      
\end{bmatrix}$   \\ 
 \cline{1-1}   \cline{4-5} 
 \texttt{LAG-VI}  & &  &  
$  \begin{bmatrix}
\bM   & \bf 0 &   \bf 0   \\
\bf 0 & \bOmega +\bXi  &  \bf 0  \\
\bf 0 &  -\I_M  &  \bXi^{-1}     
\end{bmatrix} $  & $ \begin{bmatrix}
\I_N   & \bf 0 & \bf 0  \\
\bf 0 & \I_M  &  \bf 0 \\
\bf 0 &  -\bXi  &  \I_M      
\end{bmatrix}$     \\  
 \cline{1-1}   \cline{4-5} 
 \texttt{LAG-VII} & &  &  
$  \begin{bmatrix}
\bM   & \bf 0 &  -\bA^\top   \\
\bf 0 & \bOmega  &  \I_M \\
\bA &  -\I_M  &  \bXi^{-1}     
\end{bmatrix} $ & 
$ \begin{bmatrix}
\I_N   & \bf 0 & -\bM^{-1} \bA^\top \\
\bf 0 & \I_M  &  \bOmega^{-1} \\
\bXi \bA &  -\bXi  &  \I_M      
\end{bmatrix}$     \\ 
    \Xhline{1.2pt}  
     \end{tabular}  } 
\vskip 0.5em
\end{table}

\begin{table} [h!] \label{table_lag_sq}
\centering
\caption{The corresponding $\cS$ and $\cG$ of the Lagrangian-based schemes }
\vspace{-.5em}
\hspace*{-.2cm}
\resizebox{.97\columnwidth}{!} {
\begin{tabular}{||c||c|c|c||} 
    \Xhline{1.2pt}  
schemes & $\cS$ & $\cG$ &  convergence condition \\
\hline
 \texttt{LAG-I}  &  $ \begin{bmatrix}
\bM   & \bf 0 &  -\bA^\top   \\
\bf 0 & \bOmega  &  \I_{M}   \\
-\bA &  \I_{M}  &  \bXi^{-1}     
\end{bmatrix} $ &  $ \begin{bmatrix}
\bM   & \bf 0 &  -\bA^\top   \\
\bf 0 & \bOmega  &  \I_{M}   \\
-\bA &  \I_{M}  &  \bXi^{-1}     
\end{bmatrix} $ &  \tabincell{c}{  $\bM,\bOmega,\bXi \in \bbS_{++}$ \\ 
 $\bXi^{-1}  \succ \bA \bM^{-1}
\bA^\top + \bOmega^{-1}$ }  \\ 
\hline
 \texttt{LAG-II}  & $ \begin{bmatrix}
\bM   & \bf 0 &   \bf 0   \\
\bf 0 & \bOmega   &  -\I_M  \\
\bf 0 &  -\I_M  &  \bXi^{-1}     
\end{bmatrix}   $  & $ \begin{bmatrix}
\bM   & \bf 0 &   \bf 0   \\
\bf 0 & \bOmega   &  -\I_M  \\
\bf 0 &  -\I_M  &  \bXi^{-1}     
\end{bmatrix}   $  &  \multirow{2}{*}{  \tabincell{c}{ \\   $\bM \in \bbS_{+}$ \\ $\bOmega,\bXi \in \bbS_{++}$ \\ 
 $\bOmega   \succ  \bXi $ } }   \\ 
\cline{1-3}
 \texttt{LAG-III}  & $  \begin{bmatrix}
\bM   &  \bf  0  &  \bf 0   \\
 \bf 0   &  \bOmega + \bXi &  \I_M \\
  \bf 0  &   \I_M  &  \bXi^{-1}  \end{bmatrix} $ & 
$  \begin{bmatrix}
\bM   &  \bf  0  &  \bf 0   \\
 \bf 0   &  \bOmega &  \I_{M}  \\
  \bf 0  & \I_{M}  & \bXi^{-1}   \end{bmatrix}$ &   \\ 
  \hline
  \texttt{LAG-IV}  &  $ \begin{bmatrix}
\bM+\bA^\top \bXi \bA   & \bf 0 &  -\bA^\top   \\
\bf 0 & \bOmega  &  \bf 0 \\
-\bA &  \bf 0  &  \bXi^{-1}   
\end{bmatrix} $ &  $  \begin{bmatrix}
\bM   & \bf 0 &  -\bA^\top \\
\bf 0 & \bOmega  &  \bf 0 \\
-\bA &  \bf 0  &  \bXi^{-1}     
\end{bmatrix} $ & \tabincell{c}{ $\bM, \bXi \in \bbS_{++}$ \\ $\bOmega \in \bbS_{+}$ \\ 
 $\bM \succ \bA^\top \bXi \bA $ } \\ 
  \hline
  \texttt{LAG-V}  &  $   \begin{bmatrix}
\bM +\bA^\top \bXi \bA   & \bf 0 & \bf 0 \\
\bf 0 & \bOmega+\bXi   &  \bf 0 \\
\bf 0 &  \bf 0  &  \bXi^{-1}   
\end{bmatrix} $ &  $  \begin{bmatrix}
\bM   &  \bA^\top\bXi  &  \bf 0 \\
 \bXi  \bA & \bOmega   &  \bf 0  \\
\bf 0 &   \bf 0  &  \bXi^{-1}    
\end{bmatrix} $ &  \tabincell{c}{ $\bM,\bOmega,\bXi \in \bbS_{++}$ \\  $\bM  \succ \bA^\top \bXi   \bOmega^{-1} \bXi  \bA $ }  \\ 
  \hline
  \texttt{LAG-VI}  &  $   \begin{bmatrix}
\bM    & \bf 0 & \bf 0 \\
\bf 0 & \bOmega+\bXi  &  \bf 0 \\
\bf 0 &  \bf 0  &  \bXi^{-1}     
\end{bmatrix} $ &  $   \begin{bmatrix}
\bM    & \bf 0 & \bf 0 \\
\bf 0 & \bOmega   &  \bf 0 \\
\bf 0 &  \bf 0  &  \bXi^{-1}     
\end{bmatrix}  $ &   \tabincell{c}{ $\bM,\bOmega \in \bbS_{+}$ \\  $\bXi \in \bbS_{++} $ }\\ 
  \hline
  \texttt{LAG-VII}  &  $   \begin{bmatrix}
\bM    & \bf 0 & \bf 0 \\
\bf 0 & \bOmega  &  \bf 0 \\
\bf 0 &  \bf 0  &  \bXi^{-1}     
\end{bmatrix} $ &  $  \begin{bmatrix}
\bM -\bA^\top \bXi\bA  & \bA^\top \bXi  & \bf 0  \\
 \bXi \bA  & \bOmega-\bXi   &  \bf 0  \\
\bf 0 &   \bf  0  & \bXi^{-1} - \bA\bM^{-1}\bA^\top -\bOmega^{-1}
\end{bmatrix} $ &   \tabincell{c}{ $\bM,\bOmega,\bXi \in \bbS_{++}$ \\  $\bXi^{-1}  \succ \bA \bM^{-1} \bA^\top + \bOmega^{-1}$ } \\ 
    \Xhline{1.2pt}  
     \end{tabular}  } 
\vskip 0.5em
\end{table}

These algorithms can be interpreted by alternating optimization of some cost function. For instance, \texttt{LAG-I} and \texttt{LAG-VII} stem from the alternating optimization of non-augmented Lagrangian $\cL(\bx,\ba,\bp)$. For example, both $\bx$- and $\ba$-updates of \texttt{LAG-I} come from
\[
\left\lfloor  \begin{array}{lll}
\bx^{k+1} & = & \arg \min_\bx  \cL 
(\bx,\ba^k,\bp^k) 
+  \frac{1}{2} \|\bx - \bx^k \|_{\bM}^2,  \\
\ba^{k+1} & =  & \arg \min_\ba  \cL 
(\bx^k, \ba, \bp^k) 
  +\frac{1}{2} \|\ba - \ba^k\|_{\bOmega}^2.
 \end{array} \right. 
 \]
 
\texttt{LAG-V} and \texttt{LAG-VI} are based on the augmented Lagrangian $\cL_\bXi(\bx,\ba,\bp)$. For example, the $\bx$- and $\ba$-updates of \texttt{LAG-V} are obtained by
\[
\left\lfloor  \begin{array}{lll}
\bx^{k+1} & = & \arg \min_\bx  \cL_\bXi 
(\bx,\ba^k,\bp^k) 
+  \frac{1}{2} \|\bx - \bx^k \|_{\bM}^2,  \\
  \ba^{k+1} & =  & \arg \min_\ba  \cL_\bXi 
(\bx^k, \ba, \bp^k) 
  +\frac{1}{2} \|\ba - \ba^k\|_{\bOmega}^2.
 \end{array} \right. 
 \]
 
The $\bx$- and $\ba$-updates of \texttt{LAG-II}, \texttt{LAG-III} and \texttt{LAG-IV} are the hybrid optimizations of both non-augmented and augmented forms. For example, the $\bx$-update of \texttt{LAG-IV} is from non-augmented, while the $\ba$-update is from augmented, i.e.,
\[
\left\lfloor  \begin{array}{lll}
\bx^{k+1} & = & \arg \min_\bx  \cL (\bx,\ba^{k},\bp^k) 
+  \frac{1}{2} \|\bx - \bx^k \|_{\bM}^2,  \\
\ba^{k+1} & =  & \arg \min_\ba  \cL_\bXi 
(\bx^{k+1}, \ba,  \bp^k) 
  +\frac{1}{2} \|\ba - \ba^k\|_{\bOmega}^2.  \\ 
 \end{array} \right. 
 \]
 
The preconditioning technique \cite[Sect. 4.3]{cp_2011} can be applied to the $\bx$-updates of \texttt{LAG-II,III,V,VI} and   $\ba$-updates of \texttt{LAG-IV,V,VI}, see \cite[Sect. 4.1]{fxue_gopt} for more details.

If $\bXi = \gamma \I_M$, \texttt{LAG-V}  and \texttt{LAG-VI}  reduce to \cite[Eqs.(19) and (20)]{fxue_gopt}. Their comparisons and connections to \cite[Algorithms 1 and 2]{shefi} have been discussed in \cite[Sect. 4.1]{fxue_gopt}.  In addition, the convergence condition of \cite[Eq.(19)]{fxue_gopt}, by \cite[Proposition 5.2 and Theorem 5.1]{shefi}, is $\bM \succ \gamma \bA^\top \bA$ and $\bOmega \succ \gamma \I_M$. Our analysis in Table 3 shows that this condition can be relaxed to $\bM \succ \gamma^2 \bA^\top \bOmega^{-1} \bA$, which is obviously milder than $\bM \succ \gamma \bA^\top \bA$ and $\bOmega \succ \gamma \I_M$.

Finally, note that the monotone operator $\cA$ represents the optimality condition of \eqref{lag}. Indeed, $\bcstar \in \zer \cA$ implies the KKT conditions:   $- \bA^\top \bp^\star \in \partial f(\bxstar)$, $\bp^\star \in \partial g(\ba^\star)$ and 
 $\ba^\star = \bA \bx^\star$, i.e. ${\bf 0} \in \partial f(\bxstar) +\bA^\top \partial g(\bA\bxstar)$. This is the reason for why all the Lagrangian-based schemes in Table 1 share the same $\cA$.
 
Another important observation is that $\cA$ bears a typical (diagonal) monotone + (off-diagonal) skew-symmetric structure:
\[
\begin{bmatrix}
\partial f & \bf 0  & \bA^\top \\
\bf 0 & \partial g  & -\I_{M}   \\
-\bA  & \I_{M} & \bf 0   
\end{bmatrix} = \underbrace{  \begin{bmatrix}
\partial f & \bf 0  & \bf 0 \\
\bf 0 & \partial g  & \bf 0   \\
\bf 0  & \bf 0 & \bf 0   
\end{bmatrix} }_\text{monotone}  +
\underbrace{  \begin{bmatrix}
\bf 0 & \bf 0  & \bA^\top \\
\bf 0 & \bf 0  & -\I_{M}   \\
-\bA  & \I_{M} & \bf 0   
\end{bmatrix} }_\text{skew}, 
\]
which has also been noticed in \cite{arias_2011,bredies_2017,plc_fixed}. This remark also applies to other classes of algorithms, see Sect. \ref{sec_pd} and \ref{sec_mix}.

\subsection{Connections to existing algorithms}
\label{sec_lag_eq}
\cite[Sect. 3.1]{fxue_gopt} discussed the connection of a special case of \texttt{LAG-I} to PDHG. We here show more connections.

\subsubsection{\texttt{LAG-I}: two forms of PDHG}
Letting  $\bu: = \begin{bmatrix}
\bx \\ \ba \end{bmatrix}$, $q(\bu):=f(\bx)+g(\ba)$, $\bU: = \begin{bmatrix} \bA & - \I_M \end{bmatrix}$, the Lagrangian \eqref{lag} is compactly given as 
\be  \label{e21}
\cL(\bu,\bp) = q(\bu) +\langle \bp | \bU\bu \rangle.
\ee 
This Lagrangian objective consists of the primal part of $q(\bu)$, the dual part of $0$, and their interplay represented by $\langle \bp | \bU\bu \rangle$.   \texttt{LAG-I} is equivalently written as
\[
\left\lfloor  \begin{array}{lll}
\bu^{k+1} &: = & \prox_q^\bR \big(  \bu^k -\bR^{-1} \bU^\top \bp^k  \big),  \\
 \bp^{k+1}  & := &    \bp^k + \bGamma \bU  
 \big(  2 \bu^{k+1}  -\bu^k  \big),
  \end{array} \right. 
\]
where $\bR = \begin{bmatrix} \bM & \bf 0  \\
\bf 0 & \bOmega \end{bmatrix}$. This is essentially a special case of \texttt{PDS-I} in Sect. \ref{sec_pds_algo}, where $\bu$-step is a primal update, $\bp$-step is a dual update. Compared to the commonly used primal-dual form  \eqref{pd},  \texttt{LAG-I} associated with the Lagrangian \eqref{lag} or \eqref{e21} adopts a different splitting strategy, which treats $\bu=(\bx,\ba)$ as primal variable and $\bp$ as dual, whereas \eqref{pd} treats $f(\bx)$ as primal and $g^*(\bp)$ as dual. 

The following proposition shows that under a certain condition, \texttt{LAG-I} can be simplified to the alternating updates between $f$ and $g^*$, which coincides with the splitting strategy of \eqref{pd}. This result also extends the discussion in \cite[Sect. 3.1]{fxue_gopt} to general proximal metrics, and thus, the proof is omitted.
\begin{proposition}  \label{p_lag_1}
Given  \texttt{LAG-I}, then, the following hold.

{\rm (i)} \texttt{LAG-I}  is equivalent to
\be \label{lag_1_eq}
\left\lfloor  \begin{array}{lll}
\bx^{k+1} &: = & \prox_f^\bM \big(  \bx^k -\bM^{-1} \bA^\top \bp^k  \big),  \\
  \bs^{k+1} &: =  & \prox_{g^*}^{\bOmega^{-1}}  
  ( \bOmega \ba^k +  \bp^k  ),       \\ 
\ba^{k+1}  & := &  \ba^k +\bOmega^{-1}  (\bp^k -
  \bs^{k+1} ), \\
\bp^{k+1}  & := &    \bp^k + \bXi  
 \big(  \bA (2 \bx^{k+1}  -\bx^k ) - (2\ba^ {k+1} -\ba^k)  \big). \end{array} \right. 
 \ee 

{\rm (ii)} If $ \bOmega = 2\bXi$,  $\bs^k = \bp^k$, \eqref{lag_1_eq} reduces to
\be \label{xyxz}
\left\lfloor  \begin{array}{l}
\bx^{k+1} := \prox_f^\bM \big( \bx^k - \bM^{-1} \bA^\top \bp^k \big),  \\
\bp^{k+2} := \prox_{ g^* }^{ \bOmega^{-1}} \big( \bp^k + 
\bOmega \bA (2 \bx^{k+1}  -\bx^k ) \big).  
\end{array}   \right.
\ee 
\end{proposition}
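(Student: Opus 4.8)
The two claims are obtained purely by algebraic manipulation of \texttt{LAG-I}; the only analytic ingredient is the metric Moreau decomposition, so the proof is short.

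\emph{Part (i).} The plan is to rewrite the $\ba$-update of \texttt{LAG-I}, which uses $\prox_{g}^{\bOmega}$, in terms of the conjugate $g^{*}$. I would first record the generalized Moreau identity
\[
\prox_{g}^{\bOmega}(\bz)\;=\;\bz-\bOmega^{-1}\prox_{g^{*}}^{\bOmega^{-1}}(\bOmega\bz),\qquad \bOmega\in\bbS_{++},
\]
which follows at once from the optimality condition $\bOmega(\bz-\bu)\in\partial g(\bu)\Leftrightarrow \bu\in\partial g^{*}\big(\bOmega(\bz-\bu)\big)$, the substitution $\bw:=\bOmega(\bz-\bu)$, and completion of the square. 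Applying it with $\bz=\ba^{k}+\bOmega^{-1}\bp^{k}$, so that $\bOmega\bz=\bOmega\ba^{k}+\bp^{k}$, and \emph{naming} the conjugate proximal point $\bs^{k+1}:=\prox_{g^{*}}^{\bOmega^{-1}}(\bOmega\ba^{k}+\bp^{k})$, I get $\ba^{k+1}=\ba^{k}+\bOmega^{-1}(\bp^{k}-\bs^{k+1})$; the $\bx$- and $\bp$-updates of \texttt{LAG-I} are copied verbatim. Collecting these four lines is exactly \eqref{lag_1_eq}, so (i) holds with no further work.

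\emph{Part (ii).} Now impose $\bOmega=2\bXi$ and the identification $\bs^{k}=\bp^{k}$; the plan is to use these to eliminate both $\ba$ and $\bs$ from \eqref{lag_1_eq}. I would substitute the $\ba$-update into the $\bp$-update: since $2\ba^{k+1}-\ba^{k}=\ba^{k}+2\bOmega^{-1}(\bp^{k}-\bs^{k+1})$ and, crucially, $2\bXi\bOmega^{-1}=\I_{M}$ when $\bOmega=2\bXi$, the $\bp$-update collapses to
\[
\bp^{k+1}\;=\;\bs^{k+1}+\bXi\big(\bA(2\bx^{k+1}-\bx^{k})-\ba^{k}\big).
\]
Since $\bXi\in\bbS_{++}$, imposing $\bp^{k+1}=\bs^{k+1}$ forces the hidden relation $\ba^{k}=\bA(2\bx^{k+1}-\bx^{k})$; feeding this back into the definition of $\bs^{k+1}$ rewrites its argument as $\bOmega\ba^{k}+\bp^{k}=\bp^{k}+\bOmega\bA(2\bx^{k+1}-\bx^{k})$. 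Hence the $\bs$-update becomes $\prox_{g^{*}}^{\bOmega^{-1}}$ applied to $\bp^{k}+\bOmega\bA(2\bx^{k+1}-\bx^{k})$, which together with the untouched update $\bx^{k+1}=\prox_{f}^{\bM}(\bx^{k}-\bM^{-1}\bA^{\top}\bp^{k})$ is precisely \eqref{xyxz} (after the relabeling of the dual iterate that merges the $\bs$- and $\bp$-substeps into a single sequence). As a by-product one reads off the announced interpretation: the reduced scheme alternates a $\prox_{f}$ and a $\prox_{g^{*}}$, i.e.\ it uses the same $f$-versus-$g^{*}$ splitting as the Chambolle--Pock form \eqref{pd}, in contrast with the $(\bx,\ba)$-versus-$\bp$ splitting of \texttt{LAG-I}.

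\emph{Where the difficulty sits.} The analytic content is negligible; the one point requiring care is the bookkeeping in (ii). One has to check that the elimination of $\ba$ respects the \emph{order} of the updates in \eqref{lag_1_eq} — equivalently, that the hidden invariant $\ba^{k}=\bA(2\bx^{k+1}-\bx^{k})$ produced by $\bs^{k}=\bp^{k}$ together with $\bOmega=2\bXi$ is compatible with $\bx^{k+1}$ depending only on $(\bx^{k},\bp^{k})$ — and one has to track the index of the dual variable through the merging of the two dual substeps. Since this is the general-metric analogue of the scalar-metric computation already carried out in \cite[Sect.~3.1]{fxue_gopt}, the details may be omitted.
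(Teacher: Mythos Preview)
Your proof is correct and follows the natural route; the paper itself omits the proof, noting only that it extends the scalar-metric computation of \cite[Sect.~3.1]{fxue_gopt} to general metrics. Your use of the metric Moreau identity for (i) and the algebraic elimination for (ii)---in particular the observation that $2\bXi\bOmega^{-1}=\I_M$ is precisely what collapses the $\bp$-update to $\bp^{k+1}=\bs^{k+1}+\bXi(\bA(2\bx^{k+1}-\bx^{k})-\ba^{k})$---is exactly the intended argument, and your remark that the invariant $\bs^{k}=\bp^{k}$ must be checked for self-consistency is the only point of substance left implicit in the paper.
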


Observe that the scheme \eqref{xyxz} is essentially \texttt{PDS-I}---a generalized version of PDHG \cite[Eq. (8)]{fxue_gopt}, which will be discussed in Sect. \ref{sec_pd}. 

If one chooses $\bOmega = \bXi$ in \texttt{LAG-I} (which violates the convergence condition), and $\bs^k = \bp^k$, then combining the updates of $\bs$, $\ba$ and $\bp$ in \eqref{lag_1_eq},   we obtain $ \bp^{k+2} = \prox_{g^* }^{\bXi^{-1} }  \big( \bp^{k+1} +  \bXi \bA (2 \bx^{k+1}  -\bx^k )  \big)  $. 
Thus, \texttt{LAG-I} becomes
\[
\left\lfloor  \begin{array}{l}
\bx^{k+1} := \prox_f^\bM \big( \bx^k - \bM^{-1} \bA^\top \bp^k \big), \\
\bp^{k+1} := \prox_{g^*}^{\bXi^{-1} } \big( \bp^k + \bXi \bA  (2\bx^{k} - \bx^{k-1}) \big). 
\end{array}   \right.
\] 
This is a  PDHG-like algorithm, but with illogical and weird update (noting that $\bp^{k+1}$ is obtained without using $\bx^{k+1}$). It is not guaranteed to converge, due to the unreasonable assumption $\bOmega = \bXi$.

\subsubsection{\texttt{LAG-V}: semi-implicit Arrow-Hurwicz scheme}
We now show that \texttt{LAG-V} is essentially an instance of the classical semi-implicit Arrow-Hurwicz scheme. 

Let $\bu:= \begin{bmatrix}
\bx \\ \ba  \end{bmatrix} $, $q(\bu):=f(\bx)+g(\ba)$, $\bU := \begin{bmatrix}
\bA & - \I_M \end{bmatrix}$, the augmented Lagrangian \eqref{aug_lag} is compactly written as
\[
\cL_\bGamma(\bu, \bp) = q(\bu) + \langle \bp | \bU\bu \rangle 
+ \frac{1}{2} \|\bU \bu \|_\bGamma^2.
\]
With the variable metrics $\bR$ and $\bGamma$, the {\it semi-implicit} Arrow-Hurwicz scheme is given by
\[
 \left\lfloor  \begin{array}{lll}
\bu^{k+1 } & :\in  & \bu^k - \bR^{-1} \big( \partial q(\bu^{k+1})+\bU^\top \bp^k +\bU^\top \bGamma \bU \bu^{k+1} \big),  \\
\bp^{k+1} &:= & \bp^k + \bGamma \bU \bu^{k+1},
   \end{array}   \right.
\]
i.e., 
\be \label{x1}
 \left\lfloor  \begin{array}{lll}
\bu^{k+1 } & :=  & \prox_q^{\bR+\bU^\top \bGamma \bU} \big( (\bR+\bU^\top \bGamma \bU)^{-1} (\bR \bu^k - \bU^\top \bp^k ) \big),  \\
\bp^{k+1} &:= & \bp^k + \bGamma \bU \bu^{k+1}.
   \end{array}   \right.
\ee
The equivalent PPA form is given as
\[
\begin{bmatrix}
\bf 0 \\ \bf 0 \end{bmatrix} \in 
\begin{bmatrix}
\partial q & \bU^\top \\  -\bU  & \bf 0 \end{bmatrix} 
\begin{bmatrix}
\bu^{k+1} \\ \bp^{k+1} \end{bmatrix} +
\begin{bmatrix}
\bR & \bf 0 \\  \bf 0  & \bGamma^{-1} \end{bmatrix} 
\begin{bmatrix}
\bu^{k+1}- \bu^{k} \\ \bp^{k+1} - \bp^{k} \end{bmatrix}, 
\]
for which it is easy to show the convergence.

Furthermore, if one chooses $\bR  = \begin{bmatrix}
\bM & \bA^\top \bGamma \\ \bGamma \bA & \bOmega \end{bmatrix}$, such that $\bR +\bU^\top\bGamma\bU = \begin{bmatrix}
\bM +\bA^\top \bGamma \bA & \bf 0 \\
 \bf 0 & \bOmega+\bGamma \end{bmatrix}$, which  makes $\bx$ and $\ba$ to be fully decoupled, the Arrow-Hurwicz  scheme \eqref{x1} can be split into $(\bx, \ba,\bp)$:
\[
 \left\lfloor  \begin{array}{lll}
\bx^{k+1 } & :=  & \prox_f^{\bM+\bA^\top \bGamma \bA} \big( \bx^k - (\bM+\bA^\top \bGamma \bA)^{-1} ( \bA^\top\bGamma\bA   \bx^k - \bA^\top \bGamma \ba^k + \bA^\top \bp^k ) \big),  \\
\ba^{k+1 } & :=  & \prox_g^{\bOmega+\bGamma} \big( \ba^k - (\bOmega+\bGamma)^{-1} ( -\bGamma \bA  \bx^k +  \bOmega \ba^k - \bp^k ) \big),  \\
\bp^{k+1} &:= & \bp^k + \bGamma (\bA \bx^{k+1}-\ba^{k+1} )  ,
   \end{array}   \right.
\]
which is exactly \texttt{LAG-V}. The convergence condition (as shown in Table 3) follows from $\bR\succ \bf 0$.

\subsubsection{\texttt{LAG-VI}: ADMM and PDHG}
\texttt{LAG-VI} is essentially a proximal ADMM with proximal metrics $\bM$ and $\bOmega$. We now show the connection of \texttt{LAG-VI} to PDHG.
 
\begin{proposition}  \label{p_lag_2}
Given \texttt{LAG-VI},   the following hold:

{\rm (i)} If $\bOmega = \bf 0$, \texttt{LAG-VI} is equivalent to
\be \label{x78}
\left\lfloor  \begin{array}{l}
\bx^{k+1} :=   \prox_{f}^{ \bM+ \bA^\top \bXi \bA }\big( \bx^k - ( \bM + \bA^\top\bXi \bA)^{-1} \bA^\top  (2 \bp^k - \bp^{k-1}  ) \big),  \\
\bp^{k+1} := \prox_{ g^* }^{ \bXi^{-1}} \big( \bXi \bA 
\bx^{k+1} + \bp^k \big),  
\end{array}   \right.
\ee

{\rm (ii)} If $\bM = \frac{1}{\tau}\I_N - \gamma \bA^\top \bA$,  $\bOmega = \bf 0$,  $\bGamma = \gamma \I_M$, \texttt{LAG-VI} reduces to the PDHG \cite[Eq.(9)]{fxue_gopt}:
\[
\left\lfloor  \begin{array}{l}
\bx^{k+1} :=   \prox_{\tau f} \big( \bx^k - \tau \bA^\top  (2 \bp^k - \bp^{k-1}  ) \big),  \\
\bp^{k+1} := \prox_{\gamma g^* }  \big( \bp^k  + \gamma  \bA  \bx^{k+1} \big),  
\end{array}   \right.
\]

\end{proposition}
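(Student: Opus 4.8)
The plan is to reduce the three-variable recursion \texttt{LAG-VI} to a two-variable primal--dual iteration by eliminating the auxiliary sequence $\{\ba^k\}$, and then to recognize what remains as the scheme \eqref{x78} with the help of the generalized Moreau decomposition; part (ii) will follow by specializing the metrics. Throughout, note that for \texttt{LAG-VI} the convergence condition (Table 3) permits $\bOmega = \mathbf{0}$ while still requiring $\bXi \in \bbS_{++}$, so $\bXi^{-1}$ is available.

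First I would set $\bOmega = \mathbf{0}$, which collapses the $\ba$-update of \texttt{LAG-VI} to $\ba^{k+1} = \prox_g^{\bXi}\big(\bA\bx^{k+1} + \bXi^{-1}\bp^k\big)$ and leaves the $\bx$- and $\bp$-updates unchanged. From the $\bp$-update, $\bXi\ba^{k} = \bXi\bA\bx^{k} + \bp^{k-1} - \bp^{k}$, hence $\bA^\top\bXi\ba^{k} = \bA^\top\bXi\bA\bx^{k} + \bA^\top\bp^{k-1} - \bA^\top\bp^{k}$. Substituting this into the argument of the $\bx$-update, the terms $\bM\bx^k + \bA^\top\bXi\bA\bx^k$ recombine into $(\bM+\bA^\top\bXi\bA)\bx^k$, which the outer inverse returns to $\bx^k$, leaving exactly $\bx^{k+1} = \prox_f^{\bM + \bA^\top\bXi\bA}\big(\bx^k - (\bM + \bA^\top\bXi\bA)^{-1}\bA^\top(2\bp^k - \bp^{k-1})\big)$, i.e. the first line of \eqref{x78}.

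For the dual step I would abbreviate $\bz^{k+1} := \bA\bx^{k+1} + \bXi^{-1}\bp^k$, so that $\ba^{k+1} = \prox_g^{\bXi}(\bz^{k+1})$ and the $\bp$-update reads $\bp^{k+1} = \bXi\big(\bz^{k+1} - \prox_g^{\bXi}(\bz^{k+1})\big)$. The induced-metric form of Moreau's identity, $\bXi\big(\bz - \prox_g^{\bXi}(\bz)\big) = \prox_{g^*}^{\bXi^{-1}}(\bXi\bz)$ --- the $\bA = \I$ instance of Lemma \ref{l_duality}, equivalently the metric analogue of \cite[Eq.(2.21)]{plc} --- then converts it into $\bp^{k+1} = \prox_{g^*}^{\bXi^{-1}}(\bXi\bA\bx^{k+1} + \bp^k)$, the second line of \eqref{x78}; this proves (i). Part (ii) is then immediate: with $\bXi = \gamma\I_M$ and $\bM = \frac{1}{\tau}\I_N - \gamma\bA^\top\bA$ one has $\bM + \bA^\top\bXi\bA = \frac{1}{\tau}\I_N$, so $\prox_f^{\bM+\bA^\top\bXi\bA} = \prox_{\tau f}$, $(\bM+\bA^\top\bXi\bA)^{-1} = \tau\I_N$, $\prox_{g^*}^{\bXi^{-1}} = \prox_{\gamma g^*}$ and $\bXi\bA = \gamma\bA$, so that \eqref{x78} becomes the displayed PDHG recursion.

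The one point needing care is the index shift used to eliminate $\ba$: the identity relates the \emph{tails} of the two recursions, so one must initialize the reduced scheme consistently (prescribing $\bp^{-1}$, equivalently $\ba^0$) so that the extrapolation term $2\bp^k - \bp^{k-1}$ it produces agrees with $\bA^\top\bXi\ba^k$. Since the manipulation is purely linear and merely extends the reduction already carried out in \cite[Sect. 3.1]{fxue_gopt} to general proximal metrics, no further obstacle is expected.
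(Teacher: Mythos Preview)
Your proof is correct and matches the paper's approach: both eliminate $\ba^k$ from the $\bx$-step via the multiplier relation $\ba^k = \bA\bx^k + \bXi^{-1}(\bp^{k-1}-\bp^k)$, and both recast the $\bp$-update as $\prox_{g^*}^{\bXi^{-1}}$ through the metric Moreau decomposition (the paper routes this through an auxiliary variable $\bs^{k+1}$ and then observes $\bs^{k+1}=\bp^{k+1}$, whereas you apply Moreau directly, which is slightly cleaner). One small remark: the identity $\bXi\big(\bz-\prox_g^{\bXi}(\bz)\big)=\prox_{g^*}^{\bXi^{-1}}(\bXi\bz)$ you invoke is the \emph{metric} Moreau decomposition, not literally the $\bA=\I$ instance of Lemma~\ref{l_duality}, but it is standard and causes no issue.
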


\begin{proof}
(i) If $\bOmega = \bf 0$, similar to Proposition \ref{p_lag_1}--(i),  \texttt{LAG-VI}  is equivalent to
\[
\left\lfloor  \begin{array}{lll}
\bx^{k+1} & := &  \prox_{f}^{ \bM+ \bA^\top \bXi \bA }\big( ( \bM + \bA^\top\bXi \bA)^{-1} ( 
\bM \bx^k  +\bA^\top  \bXi \ba^{k} - \bA^\top \bp^k  ) \big),  \\
  \bs^{k+1}   &: =  & \prox_{g^*}^{\bXi^{-1}}  \big(
   \bXi  \bA \bx^{k+1} +\bp^k ) \big),    \\ 
  \ba^{k+1}   &: =  & \bA\bx^{k+1} +\bXi^{-1} (\bp^k - \bs^{k+1} ),  \\ 
  \bp^{k+1}  & := & \bp^k + \bXi  \big(  \bA \bx^{k+1} -
 \ba^ {k+1}  \big), 
 \end{array} \right.  
\] 
which yields that $\bs^k = \bp^k$. Substituting $\ba^k = \bA\bx^{k} +\bXi^{-1} (\bp^{k-1} - \bp^{k} ) $ into $\bx$-update completes the proof.

\vskip.1cm
(ii) clear.
\end{proof}

Observe that the scheme \eqref{x78} is essentially a generalized version of PDHG \cite[Eq.(9)]{fxue_gopt}, the corresponding   PPA form is given as
\[
 \begin{bmatrix}
\bf 0 \\ \bf 0   \end{bmatrix}   \in 
\begin{bmatrix}
\partial f &   \bA^\top   \\
-\bA & \partial g^* \end{bmatrix}   \begin{bmatrix}
\bx^{k+1 } \\ \bp^{k }  
\end{bmatrix}    +   \begin{bmatrix}
\bM + \bA^\top \bXi \bA  &  \bA^\top  \\
 \bA  &  \bXi^{-1} \end{bmatrix}    \begin{bmatrix}
\bx^{k+1 } - \bx^k \\     \bp^{k }   - \bp ^{k-1}  
\end{bmatrix}.
\]  

We consider \texttt{LAG-V} as a comparison with  \texttt{LAG-VI}. If one chooses $\bOmega = \bf 0 $ (which violates the convergence condition), following the similar steps of Proposition \ref{p_lag_2}, \texttt{LAG-V} becomes
\[
\left\lfloor  \begin{array}{l}
\bx^{k+1} :=   \prox_{f}^{ \bM+ \bA^\top \bXi \bA }\big( \bx^k - ( \bM + \bA^\top\bXi \bA)^{-1} \bA^\top  (2 \bp^k - \bp^{k-1}  ) \big),  \\
\bp^{k+1} := \prox_{ g^* }^{ \bXi^{-1}} \big( \bXi \bA 
\bx^{k} + \bp^k \big),  
\end{array}   \right.
\]
where  the $\bp$-update is illogical and weird (noting that $\bp^{k+1}$ is computed {\it without} using $\bx^{k+1}$). It is not guaranteed to converge, due to the unreasonable assumption $\bOmega = \bf 0$.

\subsection{The generalized Bregman distance}
\label{sec_lag_bregman}
We will use the PPA interpretations to show that {\it the objective value that the Lagrangian schemes in Table 1 try to minimize is essentially  an instance of generalized Bregman distance associated with $f(\bx)+g(\ba)$}.

First, we define a quantity\footnote{For any pair of $(\bc,\bc')$, the quantity of $\bPi(\bc,\bc')$ is generally only a difference, but not a distance, since it is not guaranteed to be non-negative.}:
\[
\bPi(\bc, \bc') := \cL(\bx,\ba,\bp') - \cL(\bx',\ba',\bp),
\]
where $\cL(\bx,\ba, \bp)$ is given by  \eqref{lag}. Given the schemes in Table 1, the following lemma presents a key inequality,  which directly connects $\bPi(\bctilde^k,\bc)$ to the  metric $\cQ$.
\begin{lemma} \label{l_gap_lag}
Given the Lagrangian $\cL(\bx,\ba,\bp)$ as \eqref{lag}, consider all the Lagrangian-based schemes listed in Table 1, where $\bctilde^k = (\bxtilde^k,\batilde^k,\bptilde^k) $ denotes the proximal output, when the schemes are interpreted by the PPA (shown in Table 2). Then,  the following holds, $\forall \bc =(\bx,\ba,\bp) \in \R^N \times \R^M \times \R^M$:

{\rm (i)} $\bPi(\bctilde^k, \bc) \le \big  \langle \cQ (\bctilde^k - \bc^k)  \big| \bc -\bctilde^k  \big \rangle$,

{\rm (ii)} $\bPi  \big( \frac{1}{k}  \sum_{i=0}^{k-1} \bctilde^i, \bc  \big)  \le \frac{1}{2k} \big\| \bc^0 - \bc \big\|_\cS^2$.
\end{lemma}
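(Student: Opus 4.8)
The strategy is to reduce the claim to the generalized PPA inequality already available from the iteration \eqref{gppa}, using the two structural facts that (a) the monotone operator $\cA$ in Table 2 encodes the optimality system of the Lagrangian \eqref{lag}, and (b) each proximal output $\bctilde^k$ satisfies $\mathbf{0}\in\cA\bctilde^k+\cQ(\bctilde^k-\bc^k)$. First I would prove part (i). The key observation is that $\cA$ splits as a diagonal subdifferential block $\mathrm{diag}(\partial f,\partial g,0)$ plus a skew-symmetric block $\cB$, as displayed just before Sect.~\ref{sec_lag_eq}. From the proximal step, there exist subgradients $\bv_f\in\partial f(\bxtilde^k)$, $\bv_g\in\partial g(\batilde^k)$ with $(\bv_f,\bv_g,0)^\top = -\cB\bctilde^k-\cQ(\bctilde^k-\bc^k)$. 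Pairing this identity against $\bc-\bctilde^k$ and using the subgradient inequalities $f(\bx)-f(\bxtilde^k)\ge\langle\bv_f|\bx-\bxtilde^k\rangle$ and $g(\ba)-g(\batilde^k)\ge\langle\bv_g|\ba-\batilde^k\rangle$, together with the skew-symmetry of $\cB$ (which makes the cross terms collapse exactly into the bilinear coupling $\bp^\top(\bA\bx-\ba)$ pieces that define $\bPi$), gives $\bPi(\bctilde^k,\bc)\le\langle\cQ(\bctilde^k-\bc^k)|\bc-\bctilde^k\rangle$. The bookkeeping here is the main obstacle: one must verify that the skew block contributes precisely $\langle\bp-\bptilde^k|\bA\bxtilde^k-\batilde^k\rangle-\langle\bptilde^k|\bA\bx-\ba-\bA\bxtilde^k+\batilde^k\rangle$ in a way that reassembles into $\cL(\bx,\ba,\bptilde^k)-\cL(\bxtilde^k,\batilde^k,\bp)$, which is exactly $\bPi(\bctilde^k,\bc)$ by definition. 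This is a finite, deterministic computation but it is where all the algebra lives, and it must be done once in full generality so that it applies to every row of Table 2 simultaneously (the differences among LAG-I through LAG-VII are entirely absorbed into $\cQ$ and $\cM$, not into $\cA$).

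For part (ii) I would first convert the right-hand side of (i) into a telescoping-friendly form. Write $\bctilde^k-\bc^k=\cM^{-1}(\bc^{k+1}-\bc^k)$ from the relaxation step, so $\langle\cQ(\bctilde^k-\bc^k)|\bc-\bctilde^k\rangle=\langle\cS(\bc^{k+1}-\bc^k)|\bc-\bctilde^k\rangle$ with $\cS=\cQ\cM^{-1}$. Then substitute $\bctilde^k=\bc^k+\cM^{-1}(\bc^{k+1}-\bc^k)$ and expand; using the polarization identity $2\langle\cS(\bc^{k+1}-\bc^k)|\bc-\bc^k\rangle=\|\bc^k-\bc\|_\cS^2-\|\bc^{k+1}-\bc\|_\cS^2+\|\bc^{k+1}-\bc^k\|_\cS^2$ and discarding the nonnegative residual term $\|\bc^{k+1}-\bc^k\|^2_{\cM^{-\top}\cG\cM^{-1}}/2$ (nonnegative because $\cG\in\bbS_{++}$), one obtains $\bPi(\bctilde^k,\bc)\le\tfrac12\|\bc^k-\bc\|_\cS^2-\tfrac12\|\bc^{k+1}-\bc\|_\cS^2$ — this is the same manipulation already carried out in the proof of Lemma~\ref{l_gppa}(i). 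Summing from $i=0$ to $k-1$ telescopes the right side to $\tfrac12\|\bc^0-\bc\|_\cS^2$ (the $\|\bc^k-\bc\|_\cS^2$ term is dropped by nonnegativity of $\cS$), and averaging gives $\tfrac1k\sum_{i=0}^{k-1}\bPi(\bctilde^i,\bc)\le\tfrac1{2k}\|\bc^0-\bc\|_\cS^2$. Finally, since $\bc\mapsto\bPi(\bc',\bc)$ is, for fixed $\bc'$, an affine (hence convex) function of its first argument when we instead read it as $\bctilde^i\mapsto\bPi(\bctilde^i,\bc)$ — more precisely $\bPi(\cdot,\bc)$ is a sum of two convex functions $\cL(\cdot,\cdot,\bp)$ and $-\cL(\bx',\ba',\cdot)$ evaluated at affine arguments, and $\cL$ is convex in $(\bx,\ba)$ and affine in $\bp$ — Jensen's inequality yields $\bPi\big(\tfrac1k\sum_{i=0}^{k-1}\bctilde^i,\bc\big)\le\tfrac1k\sum_{i=0}^{k-1}\bPi(\bctilde^i,\bc)$, and (ii) follows. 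I would be slightly careful in stating the convexity claim: $\bPi(\bar\bc,\bc)=\cL(\bar\bx,\bar\ba,\bp)-\cL(\bx,\ba,\bar\bp)$ as a function of $\bar\bc=(\bar\bx,\bar\ba,\bar\bp)$ is convex since $f+g$ is convex and the bilinear term $\bp^\top(\bA\bar\bx-\bar\ba)$ is affine in $\bar\bx,\bar\ba$ for fixed $\bp$, while $-\cL(\bx,\ba,\bar\bp)=-f(\bx)-g(\ba)-\bar\bp^\top(\bA\bx-\ba)$ is affine (hence convex) in $\bar\bp$; so the convexity needed for Jensen is genuine.

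The only real difficulty, then, is the uniform verification in part (i) that the skew-symmetric structure of $\cA$ makes the coupling terms reassemble into $\bPi$. Everything downstream is the standard PPA telescoping from Lemma~\ref{l_gppa} plus one application of Jensen. I would present (i) carefully with all inner products written out, then invoke the already-proven estimate from Lemma~\ref{l_gppa}(i) verbatim for the telescoping in (ii) rather than redoing it, and close with the convexity/Jensen remark.
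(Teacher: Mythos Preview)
Your proposal is correct and follows essentially the same route as the paper. For (i), the paper writes out the three block rows $\bQ_1,\bQ_2,\bQ_3$ of $\cQ$ and does the inner-product bookkeeping explicitly, whereas you organize the same computation via the skew block $\cB$ of $\cA$; the algebra and the resulting inequality are identical. For (ii), both you and the paper invoke the telescoping identity from the proof of Lemma~\ref{l_gppa}(i), sum over $i$, drop the nonnegative $\cM^{-\top}\cG\cM^{-1}$ term, and finish with Jensen's inequality applied to the convex map $\bar\bc\mapsto\bPi(\bar\bc,\bc)$.
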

\begin{proof}
(i) First, note that the proximal step of all the Lagrangian-based schemes listed in Table 1 can be written as:
\[
\begin{bmatrix}
\bf 0 \\ \bf 0 \\ \bf 0 \end{bmatrix} \in 
\begin{bmatrix}
\partial f & \bf 0 & \bA^\top \\
\bf 0 & \partial g &  -\I_M \\
-\bA & \I_M & \bf 0  \end{bmatrix}
\begin{bmatrix}
\bxtilde^k \\ \batilde^k \\ \bptilde^k \end{bmatrix} 
+ \begin{bmatrix}
\text{---}\bQ_1\text{---} \\ 
\text{---}\bQ_2\text{---} \\ 
\text{---}\bQ_3\text{---} \end{bmatrix} 
(\bctilde^k - \bc^k ), 
\]
which is:
\be \label{q3}
\left\lfloor \begin{array}{llll}
\bf 0  & \in  & \partial f(\bxtilde^k) +\bA^\top \bptilde^k  + \bQ_1 (\bctilde^k - \bc^k), \\ 
\bf 0  & \in  & \partial g(\batilde^k)  - \bptilde^k  + \bQ_2 (\bctilde^k - \bc^k), \\ 
\bf 0  & =  & -\bA\bxtilde^k +\batilde^k +  \bQ_3 (\bctilde^k - \bc^k). 
 \end{array}  \right.
\ee
Then, by convexity of $f$ and $g$, we develop:
\begin{eqnarray}
f(\bx) & \ge & f(\bxtilde^k) + \langle \partial f(\bxtilde^k) | \bx - \bxtilde^k \rangle 
\nonumber \\
& = &  f(\bxtilde^k) - \langle \bA^\top \bptilde^k  | \bx - \bxtilde^k \rangle
- \langle \bQ_1(\bctilde^k - \bc^k)  | \bx - \bxtilde^k \rangle,  \quad \text{by \eqref{q3}} \nonumber
\end{eqnarray}
and
\begin{eqnarray}
g(\ba) & \ge & g(\batilde^k) + \langle \partial g(\batilde^k) | \ba - \batilde^k \rangle 
\nonumber \\
& = &  g(\batilde^k) + \langle  \bptilde^k  | \ba - \batilde^k \rangle
- \langle \bQ_2(\bctilde^k - \bc^k)  | \ba - \batilde^k \rangle.
\quad \text{by \eqref{q3}} \nonumber
\end{eqnarray}
Summing up both inequalities yields
\begin{eqnarray}
f(\bx) +g(\ba) - f(\bxtilde^k) - g(\batilde^k) 
&\ge & - \langle \bA^\top \bptilde^k  | \bx - \bxtilde^k \rangle
- \langle \bQ_1(\bctilde^k - \bc^k)  | \bx - \bxtilde^k \rangle
\nonumber \\ 
& + & \langle  \bptilde^k  | \ba - \batilde^k \rangle
- \langle \bQ_2(\bctilde^k - \bc^k)  | \ba - \batilde^k \rangle.
\nonumber
\end{eqnarray}
Finally, we obtain
\begin{eqnarray}
\bPi(\bctilde^k, \bc) &= & \cL(\bxtilde^k,\batilde^k,\bp) - \cL(\bx,\ba,\bptilde^k)
\nonumber \\
& =& f(\bxtilde^k) + g(\batilde^k)  - f(\bx) - g(\ba) 
+\langle \bp | \bA\bxtilde^k - \batilde^k \rangle
-\langle \bptilde^k | \bA\bx - \ba \rangle
\nonumber \\
&\le &   \langle \bQ_1(\bctilde^k - \bc^k)  | \bx - \bxtilde^k \rangle +  \langle \bQ_2(\bctilde^k - \bc^k)  | \ba - \batilde^k \rangle
\nonumber \\ 
& - & \langle  \bptilde^k  | \ba - \batilde^k \rangle
+ \langle \bA^\top \bptilde^k  | \bx - \bxtilde^k \rangle
+ \langle \bp | \bA\bxtilde^k - \batilde^k \rangle
-\langle \bptilde^k | \bA\bx - \ba \rangle
\nonumber \\
& = &   \langle \bQ_1(\bctilde^k - \bc^k)  | \bx - \bxtilde^k \rangle +  \langle \bQ_2(\bctilde^k - \bc^k)  | \ba - \batilde^k \rangle +
\langle \bA\bxtilde^k  - \batilde^k | \bp -\bptilde^k  \rangle
\nonumber \\
& = &   \langle \bQ_1(\bctilde^k - \bc^k)  | \bx - \bxtilde^k \rangle +  \langle \bQ_2(\bctilde^k - \bc^k)  | \ba - \batilde^k \rangle +
\langle \bQ_3(\bctilde^k-\bc^k) | \bp -\bptilde^k  \rangle
\  \text{by \eqref{q3}} 
\nonumber  \\ 
& = &   \langle \cQ (\bctilde^k - \bc^k)  |
\bc -\bctilde^k  \rangle. 
\nonumber 
\end{eqnarray}

(ii) By Lemma \ref{l_gap_lag}-(i) and Lemma \ref{l_gppa}-(i), we further develop:
\begin{eqnarray}
\bPi(\bctilde^i, \bc) &  \le & \big  \langle \cQ (\bctilde^i - \bc^i)  \big| \bc -\bctilde^i  \big \rangle
\nonumber \\
&=&  \frac{1}{2} \big\| \bc^i - \bc \big\|_\cS^2
- \frac{1}{2} \big\| \bc^{i+1} - \bc \big\|_\cS^2
- \frac{1}{2} \big \| \bc^i -  \bc^{i+1}  \big \|
_{\cM^{-\top} \cG \cM^{-1} }^2.
\nonumber 
\end{eqnarray}
Then, summing up from $i=0$ to $k-1$, we obtain
$\sum_{i=0}^{k-1} \bPi(\bctilde^i, \bc)   \le 
\frac{1}{2} \big\| \bc^0 - \bc \big\|_\cS^2$. 
Since $\bPi(\bctilde^i, \bc) $ is a convex function w.r.t. $\bctilde^i$ (by its definition), it yields that
$\frac{1}{k}\sum_{i=0}^{k-1} \bPi(\bctilde^i,\bc)  \ge  \bPi  \big(\frac{1}{k}  \sum_{i=0}^{k-1} \bctilde^i, \bc \big)$,
which completes the proof. 
\end{proof}

Noting that Lemma \ref{l_gap_lag} is valid for any $\bc \in \R^N \times \R^M \times \R^M$, $\bPi(\bctilde^k,\bc)$ is not a distance, since it may be negative. However,   $\bPi(\bc,\bcstar)$ with $\bcstar \in \zer \cA$ is essentially a particular instance of the generalized Bregman distance generated by $q(\bu):=f(\bx) +g(\ba)$ between any point $\bu= (\bx,  \ba)$ and a saddle point  $\bustar= (\bxstar, \bastar)$. More specifically, $0 \le D_q^\flat(\bu,\bu^\star) \le \bPi(\bc, \bcstar)  \le  D_q^\sharp (\bu,\bu^\star)$.  Indeed, the generalized Bregman distance is given as
\begin{eqnarray} \label{w2}
0 & \le & D_q^\flat(\bu,\bustar) =  q(\bu) - q(\bustar) +  \inf_{\bv \in \partial q(\bustar)} 
\langle \bv  |  \bustar - \bu \rangle 
\nonumber \\
&=&  f(\bx) - f(\bxstar) + g(\ba) - g(\bastar) 
+ \inf_{\bv \in \partial f(\bxstar)  }  \langle \bv | \bxstar - \bx  \rangle   + \inf_{\bt \in \partial g(\bastar)  } \langle \bt |  \bastar  - \ba \rangle 
\nonumber \\
&\le &  f(\bx) - f(\bxstar) + g(\ba) - g(\bastar) + \langle  \bA^\top \bpstar | \bx - \bxstar \rangle   - \langle \bpstar | \ba - \bastar \rangle 
\nonumber \\
&=&  f(\bx) - f(\bxstar)  + g(\ba) - g(\bastar)  +\langle  \bpstar | \bA \bx -   \ba  \rangle \ \text{(by $\bA\bxstar = \bastar$)}
\nonumber \\
&=&  f(\bx) - f(\bxstar)  + g(\ba) - g(\bastar)  +\langle  \bpstar | \bA \bx -   \ba  \rangle - \langle  \bp |\underbrace{  \bA \bxstar -   \bastar }_{= \bf 0} \rangle 
\nonumber \\
&=& \bPi(\bc, \bcstar) \le D_q^\sharp (\bu,\bustar), 
 \end{eqnarray}
where the first inequality, i.e., the non-negativity of $ D_q^\flat(\bu,\bustar)$, is due to the convexity of $q$.

Then, we obtain the  convergence rate of  $\bPi(\bc^k, \bcstar) $ in an ergodic sense. 
\begin{theorem} \label{t_bregman_lag}
For all the Lagrangian-based algorithms shown in Table 1, the generalized Bregman distance associated with $f(\bx)+g(\ba)$ between the ergodic point $\frac{1}{k}  \sum_{i=0}^{k-1} \bctilde^i$ and a saddle point $\bcstar \in \zer \cA$ has a rate of $\cO(1/k)$:
\[
0\le \bPi  \bigg( \frac{1}{k}  \sum_{i=0}^{k-1} \bctilde^i, \bcstar \bigg)  \le \frac{1}{2k} \big\| \bc^0 - \bcstar \big\|_\cS^2,
\]
where $\{\bctilde^i\}_{i\in\N}$ and $\cS$ are defined in Lemma \ref{l_gap_lag} and \ref{l_gppa}.
\end{theorem}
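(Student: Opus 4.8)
The plan is to read off Theorem~\ref{t_bregman_lag} as a direct corollary of Lemma~\ref{l_gap_lag}--(ii) and the Bregman sandwich~\eqref{w2}, so no new machinery is needed. For the upper bound I would simply instantiate Lemma~\ref{l_gap_lag}--(ii) at the particular test point $\bc=\bcstar\in\zer\cA$. That lemma was proved for \emph{every} $\bc\in\R^N\times\R^M\times\R^M$, so the substitution is legitimate and immediately gives
\[
\bPi\bigg(\frac{1}{k}\sum_{i=0}^{k-1}\bctilde^i,\ \bcstar\bigg)\ \le\ \frac{1}{2k}\big\|\bc^0-\bcstar\big\|_\cS^2 .
\]

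For the nonnegativity I would invoke the chain of inequalities established in~\eqref{w2}, namely $0\le D_q^\flat(\bu,\bustar)\le\bPi(\bc,\bcstar)\le D_q^\sharp(\bu,\bustar)$, which holds for an \emph{arbitrary} $\bc=(\bx,\ba,\bp)$ and its primal block $\bu=(\bx,\ba)$. Taking $\bc$ to be the ergodic average $\frac1k\sum_{i=0}^{k-1}\bctilde^i$ (with $\bu$ its first two blocks), the left half of that chain yields $\bPi\big(\frac1k\sum_{i=0}^{k-1}\bctilde^i,\bcstar\big)\ge D_q^\flat(\cdot,\bustar)\ge 0$; the last inequality is just the convexity of $q=f+g$ combined with the fact that $\bcstar\in\zer\cA$ encodes the KKT relations $-\bA^\top\bpstar\in\partial f(\bxstar)$, $\bpstar\in\partial g(\bastar)$ and $\bA\bxstar=\bastar$, which is exactly what~\eqref{w2} uses. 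Combining the two halves proves the claimed $\cO(1/k)$ bound.

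There is essentially no obstacle here: the result is a packaging of two statements already proved. The only point worth a word of care is that the ergodic average of the proximal outputs $\{\bctilde^i\}$ is a legitimate argument both for Lemma~\ref{l_gap_lag}--(ii) and for~\eqref{w2}; this is automatic because each was derived at an arbitrary test point, and because $\bc\mapsto\bPi(\bc,\bcstar)$ is convex (as already used when passing from the running sum to the ergodic point in Lemma~\ref{l_gap_lag}--(ii)), so no Jensen slack is lost. Finally I would note that the argument applies verbatim to every row of Table~1, since all the Lagrangian-based schemes share the same monotone operator $\cA$ and differ only through $\cQ$ and $\cM$, hence only through $\cS=\cQ\cM^{-1}$ appearing in the bound.
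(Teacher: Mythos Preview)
Your proposal is correct and follows exactly the paper's own proof: the upper bound is obtained by specializing Lemma~\ref{l_gap_lag}--(ii) to $\bc=\bcstar\in\zer\cA$, and the nonnegativity is read off from the Bregman sandwich~\eqref{w2}. Your additional remarks about the legitimacy of the ergodic point and the uniformity over Table~1 are accurate elaborations but not required beyond what the paper's two-line proof already implicitly uses.
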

\begin{proof}
The first inequality (i.e. non-negativity) follows from \eqref{w2}. The second inequality is concluded by simply taking $\bc = \bcstar \in \zer \cA$ in Lemma \ref{l_gap_lag}-(ii). 
\end{proof}

One can write down the specific form of $\|\bc^0-\bcstar\|_\cS$ for each Lagrangian algorithm, according to the associated $\cS$. In particular, for \texttt{LAG-I} and \texttt{LAG-II}, we have: $\bPi  \big( \frac{1}{k}  \sum_{i=1}^{k} \bc^i, \bcstar  \big)  \le  \frac{1}{2k} \big\| \bc^0 - \bcstar \big\|_\cQ^2$ (noting that $\bctilde^i = \bc^{i+1}$ due to $\cM=\cI$).
\begin{remark} [Degenerate case of Bregman distance] 
The generalized Bregman distance  $D_q^\flat (\bu,\bustar)$ or $ D_q^\sharp (\bu,\bustar)$ does not always reflect or control the distance between any point $\bu=(\bx,\ba)$ and a saddle point $\bustar=(\bxstar, \bastar)$, particularly for the non-strictly convex case of $q$. Consider a degenerate case of  linear functional, when  $f(\bx)=  \langle\bx |   \bv \rangle$ with a constant vector $\bv$ and $g(\ba)=0$. Then, the Bregman distance between any two points $\bu$ and $\bu'$ is always 0, since $ D_q^\flat (\bu,\bu')  =  D_q^\sharp (\bu,\bu') = D_f(\bx,\bx') = \langle \bx | \bv\rangle - \langle \bx' | \bv\rangle   -
\langle \bv | \bx-\bx'  \rangle =0 $. It implies that the Bregman distance loses the control of the distance between $(\bx,\ba)$ and $(\bx',\ba')$. Theorem \ref{t_bregman_lag} becomes more informative,  when the involved functions $f$ and $g$ are strictly convex.
\end{remark}

\subsection{The ergodic primal-dual gap}
\label{sec_lag_gap}
Considering $\cL(\bx,\ba,\bp)$ given as \eqref{lag}, for given sets $B_1 \subset \R^N$, $B_2 \subset \R^M$ and $B_3 \subset \R^M$, we introduce a {\it primal-dual gap} function restricted to $B_1 \times B_2 \times B_3$ 
  \cite[Eq.(2.6)]{bot_2014}, \cite[Sect. 3.1]{cp_2011}, \cite[Eq.(2.14)]{nem}:
\be \label{gap}
\bPsi_{B_1 \times B_2 \times B_3} (\bc) = \sup_{\bp' \in B_3} \cL(\bx,\ba,\bp') - 
\inf_{(\bx',\ba')\in B_1\times B_2} \cL(\bx',\ba',\bp),
\ee 
from which also follows that  $\bPsi_{B_1 \times B_2 \times B_3} (\bc)
= \sup_{\bc' \in B_1 \times B_2 \times B_3} \bPi (\bc,\bc') $.

\begin{corollary} \label{c_gap_lag}
Under the conditions of Theorem \ref{t_bregman_lag}, if the set    $ B_1 \times B_2 \times B_3$ is bounded, the primal-dual gap defined as \eqref{gap} has the upper bound:
\be \label{e3}
\bPsi_{B_1 \times B_2 \times B_3}   \bigg( \frac{1}{k}  \sum_{i=0}^{k-1} \bctilde^i  \bigg)  \le \frac{1}{2k}
\sup_{\bc \in B_1 \times B_2 \times B_3} \big\| \bc^0 - \bc \big\|_\cS^2.
\ee 
Furthermore, $\bPsi_{B_1 \times B_2 \times B_3} (\frac{1}{k}  \sum_{i=0}^{k-1} \bctilde^i  ) \ge 0$, if the set $B_1 \times B_2 \times B_3$ contains a saddle point $\bcstar = (\bxstar,\bastar,\bpstar) \in \zer \cA$.
\end{corollary}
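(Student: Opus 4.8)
The plan is to deduce Corollary~\ref{c_gap_lag} directly from Theorem~\ref{t_bregman_lag} together with the identity $\bPsi_{B_1 \times B_2 \times B_3}(\bc) = \sup_{\bc' \in B_1 \times B_2 \times B_3} \bPi(\bc,\bc')$ that is already recorded just above the corollary. The key observation is that the upper bound in Lemma~\ref{l_gap_lag}-(ii), namely $\bPi(\frac{1}{k}\sum_{i=0}^{k-1}\bctilde^i,\bc) \le \frac{1}{2k}\|\bc^0-\bc\|_\cS^2$, holds for \emph{every} $\bc \in \R^N \times \R^M \times \R^M$, hence in particular for every $\bc \in B_1 \times B_2 \times B_3$.

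First I would fix $\bchat := \frac{1}{k}\sum_{i=0}^{k-1}\bctilde^i$ and take the supremum over $\bc \in B_1 \times B_2 \times B_3$ on both sides of Lemma~\ref{l_gap_lag}-(ii). On the left, $\sup_{\bc \in B_1 \times B_2 \times B_3} \bPi(\bchat,\bc)$ is exactly $\bPsi_{B_1 \times B_2 \times B_3}(\bchat)$ by the displayed identity; on the right, the supremum of $\frac{1}{2k}\|\bc^0-\bc\|_\cS^2$ is $\frac{1}{2k}\sup_{\bc \in B_1 \times B_2 \times B_3}\|\bc^0-\bc\|_\cS^2$, which is finite precisely because $B_1 \times B_2 \times B_3$ is bounded and $\cS \in \bbS_{++}$. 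This yields \eqref{e3} immediately.

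For the lower bound, suppose $B_1 \times B_2 \times B_3$ contains a saddle point $\bcstar = (\bxstar,\bastar,\bpstar) \in \zer\cA$. Then $\bPsi_{B_1 \times B_2 \times B_3}(\bchat) = \sup_{\bc' \in B_1 \times B_2 \times B_3}\bPi(\bchat,\bc') \ge \bPi(\bchat,\bcstar)$, and by the chain of inequalities \eqref{w2} (the non-negativity of the generalized Bregman distance $D_q^\flat$) one has $\bPi(\bchat,\bcstar) \ge D_q^\flat(\buhat,\bustar) \ge 0$, where $\buhat = (\bxhat,\bahat)$ is the primal part of $\bchat$. Alternatively this is just the first inequality of Theorem~\ref{t_bregman_lag} applied to the ergodic iterate. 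Hence $\bPsi_{B_1 \times B_2 \times B_3}(\bchat) \ge 0$.

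I do not anticipate a genuine obstacle here; the result is a packaging of Lemma~\ref{l_gap_lag}-(ii) and \eqref{w2} via the $\sup$-representation of the gap. The only point requiring a word of care is ensuring the supremum on the right-hand side of \eqref{e3} is finite and is legitimately pulled out of the inequality, which is where boundedness of $B_1 \times B_2 \times B_3$ enters; and, for the lower bound, that membership of a single saddle point in the constraint set suffices because the gap is a supremum and one only needs one admissible comparison point with $\bPi(\bchat,\bcstar) \ge 0$.
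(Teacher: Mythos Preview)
Your proposal is correct and follows essentially the same approach as the paper: take the supremum of Lemma~\ref{l_gap_lag}-(ii) over $\bc\in B_1\times B_2\times B_3$ to obtain \eqref{e3}, and for the lower bound use that the gap dominates $\bPi(\bchat,\bcstar)\ge 0$ via Theorem~\ref{t_bregman_lag}. The paper's proof is terser but identical in substance.
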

\begin{proof}
Since Lemma \ref{l_gap_lag}-(ii) is valid for any $(\bx,\ba,\bp)\in \R^N \times \R^M \times \R^M$,  passing to the  supremum and infimum over $(\bx,\ba) \in B_1 \times  B_2 $ and $\bp \in B_3$ yields \eqref{e3}. The non-negativity of 
$\bPsi_{B_1 \times B_2 \times B_3}$ follows from Theorem \ref{t_bregman_lag}, since $\bPsi_{B_1 \times B_2 \times B_3}$  $ \big( \frac{1}{k}  \sum_{i=0}^{k-1} \bctilde^i \big) \ge  \bPi  \big( \frac{1}{k}  \sum_{i=0}^{k-1} \bctilde^i, \bcstar \big) \ge 0$, provided that $B_1 \times B_2 \times B_3$ contains a saddle point $\bcstar \in \zer \cA$.
\end{proof}

\begin{remark}
The ergodic primal-dual gap for specific algorithms has been given in \cite{cp_2011,cp_2016,nem}. Our analysis of the primal-dual gap is general, easy and clear,  compared to the original complicated case studies of specific algorithms, e.g. \cite[Theorem 1-(b)]{cp_2011}, \cite[Theorem 9-(b)]{bot_2014} and \cite[Theorem 2.1-(d)]{bot_jmiv_2014}. All the results presented in Sect. \ref{sec_lag_bregman} and \ref{sec_lag_gap} are valid for all Lagrangian-based algorithms with  the same monotone operator $\cA$, not limited to the listed ones. More importantly, this observation also applies to other classes of algorithms, see Sect. \ref{sec_pds_gap} and \ref{sec_mix_gap}.
\end{remark}

\begin{remark} \label{r_gap_lag}
By Theorem \ref{t_gppa}, the multiplier $\{ \tilde{\bp}^k\}_{k \in \N}$ converges and therefore lies in some (unknown) bounded set $B_3 \subset \R^M$. If $\dom f$ and $\dom g$ are bounded,   Corollary \ref{c_gap_lag} could lead to an interesting result: {\it the sequence of the objective value of dual to \eqref{primal} taken at the ergodic averaging point $\frac{1}{k} \sum_{i=0}^{k-1} \bctilde^i$ converges at the rate of $\cO(1/k)$}, namely, it holds that:
\be   \label{gap_lag}
f^* \bigg( - \bA^\top \Big( \frac{1}{k}  \sum_{i=0}^{k-1} \tilde{\bp}^i \Big) \bigg) + g^* \bigg( \frac{1}{k}  \sum_{i=0}^{k-1} \tilde{\bp}^i  \bigg)    - f^*( - \bA^\top  \bp^\star)
-g^*(\bpstar)   \le  C/k,
\ee
for some constant $C$.

Indeed, if $\dom f$ and $\dom g$ are bounded, one can choose  $B_1 = \dom f$ and $B_2=\dom g$.  Since the sequence $\{ \tilde{\bp}^k\}_{k\in\N}$ lies in $B_3$, and thus, $ \frac{1}{k}  \sum_{i=0}^{k-1} \tilde{\bp}^i  \in B_3$, $\bpstar \in B_3$. Denoting the ergodic averaging point by
 $\bchat^k = \frac{1}{k}  \sum_{i=0}^{k-1} \tilde{\bc}^i$ ($\bxhat^k$, $\bahat^k$ and $\bphat^k$ are defined similarly),
using Fenchel-Young inequality \cite[Proposition 13.15]{plc_book}, we develop
\begin{eqnarray}
&& \bPsi_{B_1 \times B_2 \times B_3}   \big( \bchat^k \big)  
\nonumber \\ 
&=& \sup_{\bp' \in B_3} \cL \big(\bxhat^k, \bahat^k, \bp' \big) -  \inf_{(\bx',\ba') \in B_1 \times B_2} \cL \big(
\bx', \ba', \bphat^k \big) 
\nonumber \\ 
&=& \sup_{\bp' \in B_3} q\big( \buhat^k \big) 
+\big\langle \bp' \big| \bU \buhat^k \big\rangle
 -   \inf_{ \bu'   \in B_1 \times B_2} \big( q(\bu') +
\big\langle \bphat^k \big| \bU\bu'  \big\rangle \big) 
 \quad \text{by \eqref{e21}}
\nonumber \\ 
& \ge &  q\big( \buhat^k \big) 
+\big\langle \bU^\top \bp^\star \big| \buhat^k  \big\rangle
+ q^* \big( - \bU^\top \bphat^k \big) 
\nonumber \\ 
& \ge &  -q^*( - \bU^\top  \bp^\star) 
+ q^* \big( - \bU^\top\bphat^k  \big) ,
\nonumber 
\end{eqnarray} 
which, combining with Corollary \ref{c_gap_lag}, yields
\[
 q^* \big( - \bU^\top\bphat^k  \big) 
 -q^*( - \bU^\top  \bp^\star) 
 \le \frac{1}{2k}
\sup_{\bc \in B_1 \times B_2 \times B_3} \big\| \bc^0 - \bc \big\|_\cS^2.
\]
By the definitions of $q$ and $\bU$ of \eqref{e21},  $q^*(-\bU^\top \bp) = f^*(-\bA^\top \bp) +g^*(\bp)$, which exactly coincides with the dual of \eqref{primal}, which is given as \eqref{dual}. The conclusion \eqref{gap_lag} is reached. 

On the other hand, note that  $q^*(-\bU^\top \bp) = \sup_{\bu \in B_1 \times B_2} \big(- q(\bu) - \langle \bp| \bU \bu \rangle\big)  = - \inf_{\bu \in B_1 \times B_2} \big( q(\bu)
+ \langle \bp| \bU \bu \rangle \big)  = - \min_\bu \cL(\bu, \bp)$, and thus, the saddle-point problem of $\cL(\bu,\bp)$ \eqref{e21} becomes
\[
\max_\bp \min_\bu \cL(\bu,\bp) = \max_\bp (-q^* (-\bU^\top p)) = -\min_\bp q^*(-\bU^\top \bp),
\]
which is essentially the minimization problem of the dual $q^*(-\bU^\top \cdot)$. 

Finally, we stress that the convergence rate of $\cO(1/k)$ of the dual value holds for all the the Lagrangian-based algorithms shown in Table 1.  However, as contrary to Remark \ref{r_gap_pds}, it is difficult to obtain an {\it a priori} estimate of the constant $C$, since the bounded set $B_3$ is unknown in practice. 
\end{remark}

\subsection{Reductions of some Lagrangian schemes}
\label{sec_red_lag}
\subsubsection{\texttt{LAG-I} and \texttt{LAG-II}}
Table 2 shows that \texttt{LAG-I} and \texttt{LAG-II} can be expressed as a standard PPA \eqref{ppa} with $\cM=\cI$. Both of them can be reduced to a simple resolvent by Corollary \ref{c_gppa}-(iii):
\be \label{aa}
\bv^{k+1} :=  \big(\cI +  \cQ^{-\frac{1}{2} } \circ \cA \circ 
\cQ^{-\frac{1}{2} } \big)^{-1}   \bv^k  ,
\ee
where $\bv^k = \cQ^{\frac{1}{2} }   \bc^k$, $\cQ$ is specified in Table 2 for \texttt{LAG-I} or \texttt{II}.

\texttt{LAG-II} deserves particular attention, since the corresponding metric $\cQ$ is allowed to be degenerate.

\paragraph{Low degeneracy of \texttt{LAG-II}}
Notice that $\bM=\bf 0$ is allowed for \texttt{LAG-II}, which  becomes
\[
 \left\lfloor  \begin{array}{lll}
\bx^{k+1} & := &  \arg \min_\bx f(\bx) + \frac{1}{2} 
\big\| \bA \bx -  \ba^{k} + \bXi^{-1}  \bp^k  \big\|^2_\bXi,  \\
\bp^{k+1}  & = & \bp^k + \bXi  \big(  \bA  \bx^{k+1} - \ba^ {k}  \big) , \\ 
  \ba^{k+1} & =  & \prox_{g}^{\bOmega } \big(   \ba^k +   \bOmega^{-1}  (2 \bp^{k+1} - \bp^k) \big)  .
 \end{array} \right.
 \]
Now, the  metric $\cQ$ is degenerate (i.e. positive {\it semi-}positive) with $\rank \cQ=2M < \dimsf(\R^N\times \R^M\times \R^M)=N+2M$. The rank-deficiency of $\cQ$ shows that the variable $\bx$ is redundant that does not really take part in the iterations of \texttt{LAG-II}. We can reduce \texttt{LAG-II} based on the analysis of \cite{bredies_preprint}.

\begin{proposition} \label{p_red_lag}
\texttt{LAG-II} with $\bM=\bf 0$ and $\bOmega \succ \bXi$  can be expressed as the following resolvent:
\[
\bv^{k+1} = \big( \cI+   \cD 
 (\cL + \cK^\top \circ \partial f^* \circ \cK) \cD \big)^{-1} \bv^k, 
\]
where  $\cD = \begin{bmatrix}
\bOmega & -\I _M\\ -\I_M & \bXi^{-1} \end{bmatrix}^{-\frac{1}{2}}$, 
$\cL = \begin{bmatrix}
\partial g  & -\I_M  \\  \I_M & \bf 0  \end{bmatrix}$,
$\cK = \begin{bmatrix} \bf 0 &  -\bA^\top \end{bmatrix}$. Here, the variable $\bv$ is linked to $(\ba,\bp)$ in \texttt{LAG-II} via:  $\bv^k :=   \begin{bmatrix}
\bOmega & -\I _M\\ -\I_M & \bXi^{-1} \end{bmatrix}^{\frac{1}{2}}  \begin{bmatrix}
\ba^k \\ \bp^k \end{bmatrix} \in \R^M \times \R^M$. 
 \end{proposition}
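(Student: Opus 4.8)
The plan is to read off the PPA form of \texttt{LAG-II} from Table 2 with $\bM=\mathbf{0}$ and then eliminate the primal variable $\bx$, which the rank-deficiency of $\cQ$ flags as redundant. Since $\cM=\cI$ for \texttt{LAG-II} one has $\bctilde^k=\bc^{k+1}=(\bx^{k+1},\ba^{k+1},\bp^{k+1})$, so the generic proximal inclusion \eqref{q3}, specialized with $\bQ_1=[\,\mathbf{0}\ \mathbf{0}\ \mathbf{0}\,]$, $\bQ_2=[\,\mathbf{0}\ \bOmega\ -\I_M\,]$, $\bQ_3=[\,\mathbf{0}\ -\I_M\ \bXi^{-1}\,]$, reads
\[
\left\lfloor \begin{array}{lll}
\mathbf{0} & \in & \partial f(\bx^{k+1})+\bA^\top\bp^{k+1},\\
\mathbf{0} & \in & \partial g(\ba^{k+1})-\bp^{k+1}+\bOmega(\ba^{k+1}-\ba^k)-(\bp^{k+1}-\bp^k),\\
\mathbf{0} & = & -\bA\bx^{k+1}+\ba^k+\bXi^{-1}(\bp^{k+1}-\bp^k).
\end{array}\right.
\]
The first line is equivalent to $\bx^{k+1}\in\partial f^*(-\bA^\top\bp^{k+1})$; using it to substitute $\bA\bx^{k+1}$ in the third line gives $\ba^k+\bXi^{-1}(\bp^{k+1}-\bp^k)\in\bA\,\partial f^*(-\bA^\top\bp^{k+1})$. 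Conversely, whenever this last membership holds there exists $\bx^{k+1}\in\partial f^*(-\bA^\top\bp^{k+1})$ with $\bA\bx^{k+1}=\ba^k+\bXi^{-1}(\bp^{k+1}-\bp^k)$, so the map $(\ba^k,\bp^k)\mapsto(\ba^{k+1},\bp^{k+1})$ realized by \texttt{LAG-II} coincides with the one realized by the reduced (``shadow'') iteration formed by the second and third lines alone.

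The next step is to recognize that reduced iteration as a \emph{non-degenerate} generalized PPA in $\bd=(\ba,\bp)\in\R^M\times\R^M$. Setting $\cN:=\begin{bmatrix}\bOmega & -\I_M\\ -\I_M & \bXi^{-1}\end{bmatrix}$ (the active lower-right $2M\times 2M$ block of $\cQ$) and $\cA_{\mathrm{red}}:=\cL+\cK^\top\circ\partial f^*\circ\cK$ with $\cL,\cK$ as in the statement, a term-by-term comparison shows $\cA_{\mathrm{red}}(\ba,\bp)=\bigl(\partial g(\ba)-\bp,\ \ba-\bA\,\partial f^*(-\bA^\top\bp)\bigr)$ and that the two retained lines are exactly $\mathbf{0}\in\cA_{\mathrm{red}}\bd^{k+1}+\cN(\bd^{k+1}-\bd^k)$. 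It then remains to apply Corollary \ref{c_gppa}-(iii) with $\cQ\leftarrow\cN$ and $\cA\leftarrow\cA_{\mathrm{red}}$: writing $\bv^k:=\cN^{1/2}\bd^k$ and $\cD:=\cN^{-1/2}$, its resolvent identity becomes $\bv^{k+1}=(\cI+\cD\circ\cA_{\mathrm{red}}\circ\cD)^{-1}\bv^k$, which is the asserted formula, and $\cD$ together with the change of variables $\bv^k=\cN^{1/2}\bd^k$ is precisely the one in the statement.

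To invoke Corollary \ref{c_gppa}-(iii) I must verify its two hypotheses. Positive definiteness of $\cN$ is the Schur-complement condition $\bXi^{-1}-\bOmega^{-1}\succ\mathbf{0}$, equivalently $\bOmega\succ\bXi$, which is the standing assumption. For maximal monotonicity of $\cA_{\mathrm{red}}$ I would split it as the sum of the block-diagonal operator $(\ba,\bp)\mapsto\bigl(\partial g(\ba),\,-\bA\,\partial f^*(-\bA^\top\bp)\bigr)$ and the bounded skew-symmetric operator $(\ba,\bp)\mapsto(-\bp,\,\ba)$: the latter has full domain, $\partial g$ is maximally monotone, and $\bp\mapsto-\bA\,\partial f^*(-\bA^\top\bp)$ is monotone as the congruence $\cK^\top\circ\partial f^*\circ\cK$ of the maximally monotone $\partial f^*$ by the bounded linear $\cK$; under a standard domain qualification relating $\ran\bA^\top$ and $\dom\partial f^*$ — or, directly, through the degenerate-PPA correspondence of \cite{bredies_preprint}, which transfers maximal monotonicity of the full KKT operator $\cA$ to $\cA_{\mathrm{red}}$ — the sum is maximally monotone.

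I expect the genuine obstacle to be the rigorous justification that dropping the $\bx$-line yields an \emph{equivalent}, not merely a necessary, iteration on $(\ba,\bp)$: that the shadow sequence $\bv^k=\cN^{1/2}(\ba^k,\bp^k)$ is well defined and that $\cI+\cD\circ\cA_{\mathrm{red}}\circ\cD$ is single-valued and surjective (so that the resolvent is a genuine map). This is exactly the content of the degenerate-metric analysis of \cite{bredies_preprint} applied to the factorization of $\cQ$ whose nonzero part is $\cN$, and it is also where the precise qualification guaranteeing maximal monotonicity of $\cK^\top\circ\partial f^*\circ\cK$ must be pinned down. The remaining pieces — the block bookkeeping identifying $\cA_{\mathrm{red}}$ and $\cN$, and the Schur-complement computation — are routine.
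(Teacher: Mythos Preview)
Your argument is correct and arrives at the same resolvent, but by a different route from the paper. The paper never eliminates $\bx$ by hand: it factorizes the degenerate metric as $\cQ=\begin{bmatrix}\mathbf{0}\\ \cD_0\end{bmatrix}\begin{bmatrix}\mathbf{0}&\cD_0^\top\end{bmatrix}$ with $\cD_0\cD_0^\top=\cN$, applies \cite[Theorem 2.13]{bredies_preprint} to obtain the reduced resolvent $(\cI+\cAtilde)^{-1}$ with $\cAtilde=\bigl(\begin{bmatrix}\mathbf{0}&\cD_0^\top\end{bmatrix}\cA^{-1}\begin{bmatrix}\mathbf{0}\\\cD_0\end{bmatrix}\bigr)^{-1}$, and then computes $\cAtilde$ by block-inverting $\cA=\begin{bmatrix}\partial f & -\cK\\ \cK^\top & \cL\end{bmatrix}$; the elimination of $\bx$ via $\partial f^*$ appears only inside that block inversion. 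Your approach instead performs the elimination first, recognizes a \emph{non-degenerate} PPA in $(\ba,\bp)$ with metric $\cN$ and operator $\cA_{\mathrm{red}}=\cL+\cK^\top\circ\partial f^*\circ\cK$, and only then applies Corollary~\ref{c_gppa}-(iii).

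What each buys: your route is more elementary and transparent---one sees concretely why the reduced operator is $\cL+\cK^\top\circ\partial f^*\circ\cK$---and it avoids the black-box theorem. The paper's route is more systematic (it is the template reused for Proposition~\ref{p_red_lag_2} and Theorem~\ref{t_admm}) and, importantly, it sidesteps the very issue you flag: maximal monotonicity and single-valuedness of the reduced resolvent are delivered by \cite[Theorem 2.13]{bredies_preprint} from maximal monotonicity of the full $\cA$, so no separate qualification on $\cK^\top\circ\partial f^*\circ\cK$ is needed. In your presentation that point remains as the one nontrivial hypothesis to close, and you are right to identify it; invoking the degenerate-PPA correspondence at that spot, as you suggest, is exactly how the paper handles it.
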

\begin{proof}
$\cQ$ of \texttt{LAG-II} can be decomposed as: $\cQ =  \begin{bmatrix}
\bf 0  \\ \cD \end{bmatrix}  \begin{bmatrix}
\bf 0 & \cD^\top \end{bmatrix} $, where $\cD \cD^\top = \begin{bmatrix}
\bOmega & -\I _M\\ -\I_M & \bXi^{-1} \end{bmatrix}$. For simplicity, one can choose $\cD = \cD^\top = \begin{bmatrix}
\bOmega & -\I _M\\ -\I_M & \bXi^{-1} \end{bmatrix}^{\frac{1}{2}}$. 
The standard PPA form \eqref{ppa} becomes
\[
\bc^{k+1} =  \bigg(\cA + \begin{bmatrix}
\bf 0  \\ \cD \end{bmatrix}  \begin{bmatrix}
\bf 0 & \cD^\top \end{bmatrix} \bigg)^{-1} 
\begin{bmatrix}
\bf 0  \\ \cD \end{bmatrix}  \begin{bmatrix}
\bf 0 & \cD^\top \end{bmatrix} \bc^k. 
\]
 Let $\bv^k :=  \begin{bmatrix}
\bf 0 & \cD^\top \end{bmatrix} \bc^k =  \cD^\top  \begin{bmatrix}
\ba^k \\ \bp^k \end{bmatrix} \in \R^M\times \R^M$.  By \cite[Theorem 2.13]{bredies_preprint}, we obtain the reduced PPA: 
\begin{eqnarray}
\bv^{k+1} &=& \begin{bmatrix}
\bf 0 & \cD^\top \end{bmatrix}   \bigg( \cA +
\begin{bmatrix}
\bf 0 \\ \cD \end{bmatrix} 
\begin{bmatrix}
\bf 0 & \cD^\top \end{bmatrix}  \bigg)^{-1} 
\begin{bmatrix}
\bf 0 \\ \cD \end{bmatrix}   \bv^k
\nonumber \\
&=& \bigg( \cI + \bigg( \begin{bmatrix}
\bf 0 & \cD^\top \end{bmatrix}  \cA^{-1} 
\begin{bmatrix}
\bf 0 \\ \cD  \end{bmatrix}  \bigg)^{-1}\bigg)^{-1} \bv^k
:=\big(\cI + \cAtilde \big)^{-1} \bv^k.
\nonumber 
\end{eqnarray}
To evaluate $\cAtilde$, we rewrite $\cA = \begin{bmatrix}
\partial f & -\cK \\ \cK^\top  & \cL \end{bmatrix}$, where
$\cL = \begin{bmatrix}
\partial g  & -\I_M  \\  \I_M & \bf 0  \end{bmatrix}$,
$\cK = \begin{bmatrix}  \bf 0 & -\bA^\top \end{bmatrix}$. 
Then,  
\[
\cR = \cA^{-1} \begin{bmatrix}
\bf 0 \\ \cD  \end{bmatrix} =  \begin{bmatrix}
\partial f & -\cK \\ \cK^\top & \cL \end{bmatrix}^{-1} 
\begin{bmatrix} 
\bf 0 \\ \cD  \end{bmatrix} 
= \begin{bmatrix}
\cR_1 \\ \cR_2  \end{bmatrix},
\]
which yields the solution: $ \cR_2=  (\cL + \cK^\top \circ \partial f^*\circ \cK)^{-1} \cD$.  Thus,
\[
\cAtilde =\bigg( \begin{bmatrix}
\bf 0 & \cD^\top \end{bmatrix}  \cA^{-1} 
\begin{bmatrix}
\bf 0 \\ \cD  \end{bmatrix}  \bigg)^{-1} = 
(\begin{bmatrix}
\bf 0 & \cD^\top \end{bmatrix}  \cR)^{-1}
= ( \cD^\top   \cR_2)^{-1}, 
\]
Substituting $\cR_2$ into above  concludes the proof. \hfill 
\end{proof}

\paragraph{High degeneracy of \texttt{LAG-II}}
Furthermore, if $\bM=\bf 0$ and $\bOmega=\bXi = \I_M$, 
\texttt{LAG-II} becomes
\be \label{ew}
 \left\lfloor  \begin{array}{lll}
\bx^{k+1} & := & \arg \min_\bx f(\bx) + \frac{1}{2} 
\big\| \bA \bx -  \ba^{k} +    \bp^k  \big\|^2, \\
\bp^{k+1}  & = & \bp^k +   \bA  \bx^{k+1} - \ba^ {k} , \\ 
  \ba^{k+1} & =  & \prox_{g}  \big(   \ba^k +  2 \bp^{k+1} - \bp^k  \big)  . 
 \end{array} \right.
 \ee
Now, the corresponding  metric $\cQ$ is `more' degenerate with $\rank \cQ=M$. The following result shows that the active variable of \texttt{LAG-II} is actually $\ba^k-\bp^k$. 

\begin{proposition} \label{p_red_lag_2}
\texttt{LAG-II} with $\bM=\bf 0$  and $\bOmega=\bXi = \I_M$  can be expressed as
\[
\bv^{k+1} = \big( \cI+ ( \tilde{\cD}^\top (\cL + \cK^\top \circ \partial f^*\circ  \cK)^{-1} \tilde{ \cD}) ^{-1}  \big)^{-1} \bv^k, 
\]
where  $\tilde{ \cD} = \begin{bmatrix}
\I _M &  -\I_M   \end{bmatrix}^\top$, 
$\cL = \begin{bmatrix}
\partial g  & -\I_M  \\  \I_M & \bf 0  \end{bmatrix}$,
$\cK = \begin{bmatrix} \bf 0 &  -\bA^\top \end{bmatrix}$. Here, the variable $\bv$ is linked to $( \ba,\bp)$ in \texttt{LAG-II} via:  $\bv^k := \ba^k - \bp^k \in \R^M$. 
 \end{proposition}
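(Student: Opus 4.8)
The plan is to mirror the proof of Proposition \ref{p_red_lag}, exploiting that the specialization $\bM=\bf 0$, $\bOmega=\bXi=\I_M$ collapses the rank of the metric $\cQ$ of \texttt{LAG-II} from $2M$ down to $M$. Reading $\cQ$ off from Table 2 with these choices gives
\[
\cQ=\begin{bmatrix} \bf 0 & \bf 0 & \bf 0 \\ \bf 0 & \I_M & -\I_M \\ \bf 0 & -\I_M & \I_M \end{bmatrix}=\begin{bmatrix} \bf 0 \\ \tilde{\cD} \end{bmatrix}\begin{bmatrix} \bf 0 & \tilde{\cD}^\top \end{bmatrix},
\]
where $\tilde{\cD}=\begin{bmatrix}\I_M & -\I_M\end{bmatrix}^\top$ has full column rank $M$, so that $\rank\cQ=M<N+2M$. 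Hence the thin factor $\cC:=\begin{bmatrix}\bf 0 \\ \tilde{\cD}\end{bmatrix}:\R^M\to\R^N\times\R^M\times\R^M$ satisfies $\cQ=\cC\cC^\top$ and carries precisely the active degrees of freedom of the degenerate iteration \eqref{ew}.

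First I would set $\bv^k:=\cC^\top\bc^k=\tilde{\cD}^\top\begin{bmatrix}\ba^k\\\bp^k\end{bmatrix}=\ba^k-\bp^k$, isolating the single quantity the iteration actually advances, and recast \texttt{LAG-II} as the standard PPA $\bc^{k+1}=(\cA+\cC\cC^\top)^{-1}\cC\cC^\top\bc^k$ (Table 2 with $\cM=\cI$, cf. \eqref{ppa}). Applying the degenerate-resolvent reduction of \cite[Theorem 2.13]{bredies_preprint}, exactly as in Proposition \ref{p_red_lag}, yields
\[
\bv^{k+1}=\big(\cI+(\cC^\top\cA^{-1}\cC)^{-1}\big)^{-1}\bv^k=:(\cI+\cAtilde)^{-1}\bv^k.
\]
It then remains to evaluate $\cC^\top\cA^{-1}\cC$. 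Writing $\cA=\begin{bmatrix}\partial f & -\cK\\\cK^\top & \cL\end{bmatrix}$ with $\cL=\begin{bmatrix}\partial g & -\I_M\\\I_M & \bf 0\end{bmatrix}$ and $\cK=\begin{bmatrix}\bf 0 & -\bA^\top\end{bmatrix}$, and splitting $\cR:=\cA^{-1}\cC=\begin{bmatrix}\cR_1\\\cR_2\end{bmatrix}$, the relation $\cC\in\cA\cR$ gives from the first block $\cR_1\in\partial f^*(\cK\cR_2)$ (using $(\partial f)^{-1}=\partial f^*$) and then from the second block $\tilde{\cD}\in(\cL+\cK^\top\circ\partial f^*\circ\cK)\cR_2$, hence $\cR_2=(\cL+\cK^\top\circ\partial f^*\circ\cK)^{-1}\tilde{\cD}$. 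Since $\cC^\top\cA^{-1}\cC=\tilde{\cD}^\top\cR_2$, we obtain $\cAtilde=\big(\tilde{\cD}^\top(\cL+\cK^\top\circ\partial f^*\circ\cK)^{-1}\tilde{\cD}\big)^{-1}$, which is the claimed formula, and $\bv^k=\ba^k-\bp^k$ is the stated link.

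The main obstacle is not the algebra but the justification that \cite[Theorem 2.13]{bredies_preprint} genuinely applies in this more degenerate regime: one must check that the rank-$M$ factorization $\cQ=\cC\cC^\top$ is admissible in the degenerate-resolvent sense, and that $\cC^\top\cA^{-1}\cC$ (equivalently $\cAtilde$) is well posed even though the strict condition $\bOmega\succ\bXi$ used in Proposition \ref{p_red_lag} now degenerates to the equality $\bOmega=\bXi$. In particular one should confirm that the $\bx$-block elimination $\partial f\mapsto\partial f^*$ is licit (the relevant domain/range conditions on $\bA$ and $f$) and that the reduced resolvent indeed reproduces the three-line scheme \eqref{ew}; this is where the standing assumptions on $f,g$ and the existence of a KKT point enter. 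The remaining steps are routine block manipulations, identical in spirit to Proposition \ref{p_red_lag}.
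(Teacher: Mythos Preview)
Your proposal is correct and follows essentially the same approach as the paper: factor the degenerate metric as $\cQ=\cC\cC^\top$ with $\cC=\begin{bmatrix}\bf 0 & \I_M & -\I_M\end{bmatrix}^\top$, invoke \cite[Theorem 2.13]{bredies_preprint} to reduce to $\bv^{k+1}=(\cI+(\cC^\top\cA^{-1}\cC)^{-1})^{-1}\bv^k$, and then evaluate $\cC^\top\cA^{-1}\cC$ by the same block elimination as in Proposition \ref{p_red_lag}. The paper's proof is in fact terser than yours, simply pointing to \cite[Theorem 2.13]{bredies_preprint} and the computation in Proposition \ref{p_red_lag}; your additional remarks about verifying the hypotheses of the degenerate-resolvent theorem in the borderline case $\bOmega=\bXi$ are legitimate technical points that the paper does not explicitly address.
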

\begin{proof}
In this case, $\cQ$ of \texttt{LAG-II} can be decomposed as: $\cQ = \cD \cD^\top$ where $\cD = \begin{bmatrix}
\bf 0 & \I_M & -\I_M \end{bmatrix}^\top $. 
The standard PPA form \eqref{ppa} becomes: 
$\bc^{k+1} =  \big(\cA + \cD \cD^\top  \big)^{-1} 
 \cD \cD^\top   \bc^k$. 
Finally, the proof is completed, by \cite[Theorem 2.13]{bredies_preprint} and the proof of Proposition \ref{p_red_lag}. 
\end{proof}

The active variable of \eqref{ew}  can also be identified without the degenerate PPA analysis, as shown below.

From \eqref{ew}, we have
\[
  \ba^{k+1} - \bp^{k+1}  =
   \prox_{g}  \big(   \ba^k +  2 \bp^{k+1} - \bp^k  \big)
   - \bp^k  -   \bA  \bx^{k+1} + \ba^ {k}.
\]
Denoting $\bv^k:=\ba^k-\bp^k$, it becomes
\begin{eqnarray}
  \bv^{k+1}  &  =&   \prox_{g}  \big(   \bv^k +  2 \bp^{k+1}  \big)+ \bv^k  -   \bA  \bx^{k+1} 
  \nonumber\\
  &=&  \prox_{g}  \big(   \bv^k +  2 \bp^k +  2 \bA  \bx^{k+1} -2 \ba^ {k}  \big)+ \bv^k  -   \bA  \bx^{k+1} 
  \nonumber\\
  &=&  \prox_{g}  \big(   2 \bA  \bx^{k+1} -  \bv^ {k}  \big)+ \bv^k  -   \bA  \bx^{k+1} 
  \nonumber\\
  &=&  \prox_{g}  \big(   2   \prox_{\bA \triangleright f}  ( \bv^ {k}  ) - \bv^k \big) + \bv^k  -     \prox_{\bA \triangleright f}  ( \bv^ {k}  )
   \nonumber\\
  &=& \Big( \prox_{g}  \circ \big(   2 \prox_{\bA \triangleright f}  - \cI  \big) + \cI  -   \prox_{\bA \triangleright f}\Big) (\bv^k)
  \nonumber\\
  &=& \Big( \prox_{g}  \circ \big(   2 (\cI - \prox_{f^* \circ \bA^\top})  - \cI  \big) +
  \prox_{f^* \circ \bA^\top}  \Big) (\bv^k)\quad
  \text{(by Lemma \ref{l_duality})}
  \nonumber\\
  &=& \Big( \prox_{g}  \circ \big(  \cI -2 \prox_{f^* \circ \bA^\top}   \big) +
  \prox_{f^* \circ \bA^\top}  \Big) (\bv^k)
  \nonumber
\end{eqnarray}
which shows that \eqref{ew} is essentially a DRS algorithm (see Eq.\eqref{drs}).

\subsubsection{\texttt{LAG-VI} and the related standard ADMM/DRS}
It seems more interesting to investigate the degenerate case of \texttt{LAG-VI}, which is a representative ADMM-type algorithm.

Table 2 shows that the corresponding PPA of \texttt{LAG-VI} has a non-trivial relaxation step (i.e. $\cM\ne \cI$). This also coincides with a pioneering work of \cite[Sect. 3]{hbs_siam_2012}. Due to the non-trivial relaxation, it is difficult to obtain the equivalent resolvent from this PPA interpretation. {\it Can \texttt{LAG-VI} be written in a standard PPA form without relaxation step?} To achieve this,  by changing variable of $\bp^k:=\bz^k-\bXi \ba^k$,  \texttt{LAG-VI} becomes (with a flipped update order of $\bx \rightarrow \bz \rightarrow \ba$)
\be \label{padmm_eq}
 \left\lfloor  \begin{array}{lll}
\bx^{k+1 } & :=  & \prox_{f}^{\bM+\bA^\top\bXi \bA} \big( (\bM+ \bA^\top \bXi \bA)^{-1}   ( \bM \bx^k + 2 \bA^\top\bXi \ba^k -  \bA^\top \bz^k ) \big),  \\
\bz^{k+1} &:= & \bz^k + \bXi (\bA \bx^{k+1}-\ba^{k} ) , \\
\ba^{k+1 } & :=  & \prox_{g}^{\bOmega+\bXi}  \big(  
(\bOmega+\bXi)^{-1} ( (\bOmega-\bXi)\ba^k + \bXi \bA  \bx^{k+1} +  \bz^k )  \big) .
   \end{array}   \right.
\ee
It is easy to verify that \eqref{padmm_eq} corresponds to the standard PPA form (i.e. $\cM=\cI$):
\[
\begin{bmatrix}
\bf 0 \\ \bf 0 \\ \bf 0 \end{bmatrix} \in 
\begin{bmatrix}
\partial f & -\bA^\top\bXi & \bA^\top \\
\bXi \bA & \partial g & -\I_M  \\
-\bA & \I_M & \bf 0 \end{bmatrix} 
\begin{bmatrix}
\bx^{k+1} \\ \ba^k  \\ \bz^{k+1} \end{bmatrix} 
+  \begin{bmatrix}
\bM   &  \bf 0  & \bf 0 \\
\bf 0   &  \bOmega   & \bf 0 \\
\bf 0   &  \bf 0  & \bXi^{-1} \end{bmatrix} 
\begin{bmatrix}
\bx^{k+1}-\bx^k \\ \ba^k-\ba^{k-1}  \\
 \bz^{k+1} - \bz^k  \end{bmatrix} .
\]

\paragraph{Non-degenerate case: proximal ADMM}
If $\bM \succ \bf 0$ and $\bOmega \succ \bf 0$,  
$\cQ$ is non-degenerate. By Corollary \ref{c_gppa}-(iii), the equivalent resolvent is given  as \eqref{aa}, where  $\cA$ and $\cQ$ are specified as above, $\bv^k$ is related to $(\bx,\ba,\bp)$ of \eqref{admm} and $(\bx,\ba,\bz)$ of \eqref{admm_eq} via: $\bv^k = \cQ^{\frac{1}{2} } \bc^k
=(\bM^{\frac{1}{2}}\bx^k, \bOmega^{\frac{1}{2}}\ba^{k-1},
\bXi^{-\frac{1}{2}}\bz^k)
=(\bM^{\frac{1}{2}}\bx^k, \bOmega^{\frac{1}{2}}\ba^{k-1},
\bXi^{-\frac{1}{2}}\bp^k + \bXi^{\frac{1}{2}}\ba^k)$ .

\paragraph{Degenerate case: standard ADMM}
If $\bM=\bf 0$,  $\bOmega=\bf 0$ and $\bGamma = \gamma \I_M$, \texttt{LAG-VI} boils down to a standard ADMM \cite[Eq.(4)]{fxue_gopt}:
\be \label{admm}
 \left\lfloor  \begin{array}{lll}
\bx^{k+1 } & :=  & \arg \min_\bx f(\bx) + \frac{\gamma}{2} 
\big\|  \bA \bx -  \ba^{k} + \frac{1}{\gamma}   \bp^k  \big\|^2, \\
\ba^{k+1 } & :=  & \prox_{g/\gamma}  \big(  \bA  \bx^{k+1} +  \frac{ 1}{\gamma}  \bp^k \big),  \\
\bp^{k+1} &:= & \bp^k + \gamma (\bA \bx^{k+1}-\ba^{k+1} ).  
   \end{array}   \right.
\ee
By the variable changing  of $\bp^k:=\bz^k-\gamma \ba^k$, \eqref{admm}  becomes (with a flipped update order of $\bx \rightarrow \bz \rightarrow \ba$)
\be \label{admm_eq}
 \left\lfloor  \begin{array}{lll}
\bx^{k+1 } & :=  & \arg \min_\bx f(\bx) + \frac{\gamma}{2} 
\big\| \bA \bx - 2 \ba^{k} + \frac{1}{\gamma}   \bz^k  \big\|^2,  \\
\bz^{k+1} &:= & \bz^k + \gamma (\bA \bx^{k+1}-\ba^{k} ),  \\
\ba^{k+1 } & :=  & \prox_{g/\gamma}  \big( \frac{1}{\gamma}   \bz^{k+1} \big)  . 
   \end{array}   \right.
\ee
It is easy to verify that \eqref{admm_eq} corresponds to the standard PPA form (i.e., $\cM=\cI$):
\be \label{ppa_admm}
\begin{bmatrix}
\bf 0 \\ \bf 0 \\ \bf 0 \end{bmatrix} \in 
\begin{bmatrix}
\partial f & -\gamma \bA^\top & \bA^\top \\
\gamma \bA & \partial g & -\I_M  \\
-\bA & \I_M & \bf 0 \end{bmatrix} 
\begin{bmatrix}
\bx^{k+1} \\ \ba^k  \\ \bz^{k+1} \end{bmatrix} 
+  \begin{bmatrix}
\bf 0   &  \bf 0  & \bf 0 \\
\bf 0   &  \bf 0  & \bf 0 \\
\bf 0   &  \bf 0  & \frac{1}{\gamma} \I_M \end{bmatrix} 
\begin{bmatrix}
\bx^{k+1}-\bx^k \\ \ba^k-\ba^{k-1}  \\
 \bz^{k+1} - \bz^k  \end{bmatrix} .
\ee

The degenerate (i.e., positive {\it semi-}definite) metric $\cQ$ indicates  the  redundancy of the variables $\bx$ and $\ba$. Based on the recent result of \cite[Theorem 2.13]{bredies_preprint}, the standard ADMM \eqref{admm} or \eqref{admm_eq} can be reduced to a simple resolvent.

\begin{theorem} \label{t_admm}
The ADMM scheme \eqref{admm_eq}, being equivalent to \eqref{admm},  can be expressed as
\[
\bv^{k+1} = (\cI+  \gamma \cK^\top \cL^{-1} \cK)^{-1} \bv^k, 
\]
where $\cL = \begin{bmatrix}
\partial f  & -\gamma \bA^\top \\   
\gamma \bA & \partial g  \end{bmatrix}$,
$\cK = \begin{bmatrix}  -\bA & \I_M \end{bmatrix}^\top$. Here, the variable $\bv$ is linked to $\bz$ in \eqref{admm_eq} and $(\ba,\bp)$ in \eqref{admm} via  $ \bv^k =\frac{1}{  \sqrt{\gamma} } \bz^k = \frac{1}{  \sqrt{\gamma} } \bp^k 
+\sqrt{\gamma} \ba^k \in \R^M $.
\end{theorem}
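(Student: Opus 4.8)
The plan is to read \eqref{ppa_admm} as the relaxation-free proximal point iteration \eqref{ppa} (so $\cM=\cI$ and $\bc^{k+1}=(\cA+\cQ)^{-1}\cQ\,\bc^k$ with $\bc^k:=(\bx^k,\ba^{k-1},\bz^k)$), where $\cA$ is the displayed monotone-plus-skew operator and the metric $\cQ$ is \emph{degenerate}: positive semidefinite of rank $M$, with only the $\bz$-block active. One then invokes the degenerate-PPA reduction of \cite[Theorem 2.13]{bredies_preprint}, exactly as was done for \texttt{LAG-II} in Propositions \ref{p_red_lag} and \ref{p_red_lag_2}. Concretely, factor $\cQ=\cD\cD^\top$ minimally by $\cD=\begin{bmatrix}\mathbf{0} & \mathbf{0} & \tfrac{1}{\sqrt{\gamma}}\I_M\end{bmatrix}^\top$, which is injective with $\ran\cD=\ran\cQ$, set $\bv^k:=\cD^\top\bc^k=\tfrac{1}{\sqrt{\gamma}}\bz^k\in\R^M$, and conclude from \cite[Theorem 2.13]{bredies_preprint} that the iteration descends to the simple resolvent $\bv^{k+1}=(\cI+\tilde{\cA})^{-1}\bv^k$, where $\tilde{\cA}:=(\cD^\top\cA^{-1}\cD)^{-1}$ is again maximally monotone.

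The substance is the closed-form evaluation of $\tilde{\cA}$. Grouping the coordinates as $\big((\bx,\ba),\bz\big)$, write $\cA=\begin{bmatrix}\cL & -\cK\\ \cK^\top & \mathbf{0}\end{bmatrix}$ with $\cL=\begin{bmatrix}\partial f & -\gamma\bA^\top\\ \gamma\bA & \partial g\end{bmatrix}$ and $\cK=\begin{bmatrix}-\bA & \I_M\end{bmatrix}^\top$; here $\cL$ is maximally monotone, being the block-diagonal subdifferential $\diag(\partial f,\partial g)$ plus a skew matrix, so $\cL^{-1}$ is well defined. To evaluate $\cD^\top\cA^{-1}\cD\,\bv$, solve $\cA\,[\bw;\by]\ni\cD\bv$: the first block row yields $\bw\in\cL^{-1}(\cK\by)$, and the second yields $\cK^\top\bw$ proportional to $\bv$; eliminating the primal block $\bw$ (a Schur-complement step) leaves $\bv$ proportional to an element of $(\cK^\top\cL^{-1}\cK)\by$, while $\cD^\top[\bw;\by]$ is proportional to $\by$. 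Inverting this relation and collecting the scalar factors gives $\tilde{\cA}=\gamma\,\cK^\top\cL^{-1}\cK$, hence $\bv^{k+1}=(\cI+\gamma\,\cK^\top\cL^{-1}\cK)^{-1}\bv^k$, which is the asserted resolvent.

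It remains only to read off the reduced variable. By construction $\bv^k=\cD^\top\bc^k=\tfrac{1}{\sqrt{\gamma}}\bz^k$, and the change of variable $\bp^k=\bz^k-\gamma\ba^k$ used to pass from \eqref{admm} to \eqref{admm_eq} gives $\bv^k=\tfrac{1}{\sqrt{\gamma}}\bz^k=\tfrac{1}{\sqrt{\gamma}}\bp^k+\sqrt{\gamma}\,\ba^k$, the stated linkage. I expect the only real friction to be bookkeeping rather than anything deep: one must (a) confirm that \eqref{ppa_admm} reproduces \eqref{admm_eq} line by line; (b) verify the hypotheses of \cite[Theorem 2.13]{bredies_preprint}, i.e.\ maximal monotonicity of $\cA$ (immediate from the monotone-plus-skew structure) and minimality of the factorization $\cQ=\cD\cD^\top$; and (c) track all the factors of $\gamma$ and the signs in the skew part of $\cA$, along with the set-valuedness of $\cL^{-1}$, through the Schur-complement elimination. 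None of these obstructs the argument, so the proof reduces to the structured matrix computation sketched above.
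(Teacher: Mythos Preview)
Your proposal is correct and follows essentially the same approach as the paper: factor the degenerate metric as $\cQ=\cD\cD^\top$ with $\cD=\big[\mathbf{0}\;\mathbf{0}\;\tfrac{1}{\sqrt{\gamma}}\I_M\big]^\top$, apply \cite[Theorem~2.13]{bredies_preprint} to obtain the reduced resolvent in the variable $\bv^k=\cD^\top\bc^k=\tfrac{1}{\sqrt{\gamma}}\bz^k$, and evaluate $\tilde{\cA}=(\cD^\top\cA^{-1}\cD)^{-1}$ by the same Schur-complement elimination used in Proposition~\ref{p_red_lag}. The paper's own proof is in fact terser than yours, simply pointing to Proposition~\ref{p_red_lag} for the elimination step; your explicit block decomposition $\cA=\begin{bmatrix}\cL&-\cK\\\cK^\top&\mathbf{0}\end{bmatrix}$ and the pointwise Schur computation you sketch are exactly what that reference unpacks to.
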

\begin{proof}
$\cQ$ in  \eqref{ppa_admm}  can be decomposed as $\cQ = \cD \cD^\top =\begin{bmatrix}
\bf 0 \\ \bf  0 \\ \frac{1}{\sqrt{\gamma}} \I_M \end{bmatrix}  \begin{bmatrix}
\bf 0 & \bf 0 & \frac{1}{\sqrt{\gamma}} \I_M \end{bmatrix} $. Then, the standard PPA form \eqref{ppa_admm} becomes $\bc^{k+1} =  (\cA +\cD \cD^\top)^{-1} \cD \cD^\top \bc^k$.  Let $\bv^k := \cD^\top \bc^k =   \frac{1}{\sqrt{\gamma}} \bz^k$.  Finally, the result can be obtained by \cite[Theorem 2.13]{bredies_preprint} and similar proof of Proposition \ref{p_red_lag}. 
\end{proof}

\paragraph{Connection to standard DRS}
It is well known that the ADMM scheme \eqref{admm} is equivalent to the standard DRS algorithm \cite{lions} applied to the dual problem of \eqref{primal} (see \cite[Eq.(2)]{dinh} for example): 
\be \label{dual}
\min_\bp f^*(-\bA^\top \bp) + g^*(\bp),
\ee
which reads as
\be \label{drs}
\bz^{k+1} := \bz^k -   J_{\gamma \cB_2}  (\bz^k) +  J_{\gamma \cB_1}  \big( 2   J_{\gamma \cB_2}( \bz^k)   -  \bz^k \big) ,
\ee
where  $\cB_1=(-\bA) \circ \partial f^* \circ (-\bA^\top)$, $\cB_2 = \partial g^*$,  $J_\cB$ denotes a resolvent of $\cB$: $J_{\cB} = (\cI +\cB)^{-1}$. The solution to \eqref{dual} is given as: $\bpstar = J_{\gamma \cB_2} (\bzstar) = 
\prox_{\gamma g^*} (\bzstar)$.

Let us first examine the equivalence between ADMM \eqref{admm} or \eqref{admm_eq} and DRS \eqref{drs}, though  this fact has long been recognized. By the development of \eqref{drs}:
\[
 \left\lfloor  \begin{array}{lll}
\bw^{k+1 } & :=  & J_{ \gamma \cB_1} \big( 2\bp^k - \bz^k  \big) , \\
\bz^{k+1} &:= & \bz^k + \bw^{k+1}-\bp^{k} , \\
\bp^{k+1 } & :=  & \prox_{ \gamma \cB_2}  \big(  \bz^{k+1} \big)  .
   \end{array}   \right.
\]
By the similar technique of \cite[Sect. 4.2]{cp_2011}, we obtain by duality that:
\[
 \left\lfloor  \begin{array}{lll}
\bx^{k+1 } & :=  & \arg\min_\bx f(\bx) + \frac{\gamma}{2} \big\|\bA \bx+\frac{1}{\gamma} ( 2\bp^k - \bz^k)  \big\|^2,  \\
\bw^{k+1 } & :=  & 2\bp^k -\bz^k +\gamma \bA \bx^{k+1} , \\
\bz^{k+1} &:= & \bz^k + \bw^{k+1}-\bp^{k} , \\
\ba^{k+1 } & :=  & \arg\min_\bu g(\ba) + \frac{\gamma}{2} \big\| \ba - \frac{1}{\gamma} \bz^{k+1}  \big\|^2 , \\
\bp^{k+1 } & := &  \bz^{k+1} -\gamma  \ba^{k+1} .
   \end{array}   \right.
\]
Finally, \eqref{admm} can be obtained by keeping $(\bx,\ba,\bp)$ and removing $(\bw,\bz)$; while \eqref{admm_eq} is from  keeping $(\bx,\ba,\bz)$ and removing $(\bw,\bp)$.

The above equivalence implies that Theorem \ref{t_admm} also applies to the DRS iteration \eqref{drs}. Recall that the equivalence between DRS \eqref{drs} and PPA was discussed in an early seminal work of \cite{drs_1992}, where the DRS \eqref{drs} was shown to be equivalent to a resolvent with an {\it implicit} expression of the associated maximally monotone operator (see \cite[Sect. 4]{drs_1992}). Here, Theorem \ref{t_admm} shows another equivalent resolvent of DRS \eqref{drs}, with an {\it explicit} form of the monotone operator. However, the equivalence or connection between both forms requires further study.

\section{Operator splitting based on primal-dual form}
\label{sec_pd}

\subsection{The PDS algorithms and their PPA interpretations}
\label{sec_pds_algo}
We then consider the alternating optimization of the primal-dual form \cite[Eq.(7)]{fxue_gopt}:
\be \label{pd}
\min_\bx \max_\bp \cL(\bx,\bp) :=
 f(\bx) + \bp^\top \bA\bx - g^*(\bp), 
\ee
which gives rise to the  PDS algorithms shown in Table 4. \texttt{PDS-I,II,V,VI} and \texttt{VII} can be found in \cite[Sect. 5]{fxue_gopt}. Tables 5--6 show their equivalent PPA forms, by noting that:
\begin{itemize}
\item \texttt{PDS-I} and \texttt{PDS-II} correspond  to symmetric $\cQ$ (without relaxation): the off-diagonal parts of both $\cQ$ have opposite signs, which results in the reverse update orders of $\bx$ and $\bp$; 

\item \texttt{PDS-III} and \texttt{PDS-V}  correspond to lower triangular $\cQ$; 

\item \texttt{PDS-IV}  and \texttt{PDS-VI} correspond to upper triangular $\cQ$;

\item \texttt{PDS-VII}  corresponds  to skew-symmetric $\cQ$.
\end{itemize}

\begin{table} [h!] \label{table_pds_algo}
\centering
\caption{The proposed PDS algorithms }
\vspace{-.5em}
\hspace*{-.2cm}
\resizebox{.97\columnwidth}{!} {
\begin{tabular}{|l|l|} 
    \Xhline{1.2pt}  
    name  & iterative scheme \\
\hline 
  \tabincell{l}{ \texttt{PDS-I} \\ \cite[Eq.(26)]{fxue_gopt} }
  & $  \left\lfloor  \begin{array}{l}
\bx^{k+1} := \prox_f^\bM \big( \bx^k - \bM^{-1} \bA^\top \bp^k \big) \\
\bp^{k+1} := \prox_{g^*}^{\bXi}  \big( \bp^k +  \bXi^{-1}  \bA (2\bx^{k+1} - \bx^k) \big)
\end{array}   \right. $ \\  
 \hline
   \tabincell{l}{ \texttt{PDS-II} \\ \cite[Eq.(29)]{fxue_gopt} }
  & $   \left\lfloor  \begin{array}{l}
\bp^{k+1} := \prox_{g^* }^{\bXi} ( \bp^k +  \bXi^{-1} \bA  \bx^{k}  ) \\
\bx^{k+1} := \prox_f^\bM \big(  \bx^k - \bM^{-1}  \bA^\top (2\bp^{k+1} -  \bp^{k}) \big)  
\end{array}   \right. $ \\
 \hline
 \texttt{PDS-III} & $   \left\lfloor  \begin{array}{lll}
\bp^{k+1 } &:= & \prox_{g^*}^\bXi \big(  \bp^k + \bXi^{-1} \bA  \bx^k \big)  \\
\bxtilde^{k} &:= &  \prox_{f }^{\bM}  ( \bx^k - \bM^{-1}  \bA^\top   \bp^{k +1} ) \\
\bx^{k+1} &: =& \bxtilde^k  - \bM^{-1} \bA^\top \bXi^{-1} \bA  (\bxtilde^k - \bx^k) - \bM^{-1} \bA^\top ( \bp^{k+1} - \bp^k)     \end{array} \right.  $\\
 \hline
 \texttt{PDS-IV}  & $   \left\lfloor  \begin{array}{lll}
\bx^{k+1 } &:=&  \prox_f^\bM \big( \bx^k - \bM^{-1}  \bA^\top \bp^k  \big) \\
\bptilde^{k } &:= & \prox_{g^*}^{\bXi}  ( \bp^k + \bXi^{-1}  \bA \bx^{k+1 } ) \\
\bp^{k+1} &: =& \bptilde^k - \bXi^{-1} \bA\bM^{-1} \bA^\top  (\bptilde^k - \bp^k) + \bXi^{-1} \bA ( \bx^{k+1} - \bx^k)    \end{array} \right.  $ \\
 \hline
   \tabincell{l}{ \texttt{PDS-V} \\ \cite[Eq.(30)]{fxue_gopt} }
 & $    \left\lfloor  \begin{array}{lll}
\bptilde^{k } &:= & \prox_{g^*}^\bXi \big(  \bp^k + \bXi^{-1} \bA  \bx^k \big)  \\
\bx^{k+1 } &:= &  \prox_{f }^{\bM}  ( \bx^k - \bM^{-1}  \bA^\top   \bp^{k+1 } ) \\
\bp^{k+1}  &:= &  \bptilde^k -  \bXi^{-1} \bA ( \bx^{k+1}
-\bx^k)   \end{array} \right. $  \\
 \hline
   \tabincell{l}{ \texttt{PDS-VI} \\ \cite[Eq.(31)]{fxue_gopt} }
& $ 
 \left\lfloor  \begin{array}{lll}
\bxtilde^{k } &:= & \prox_f^\bM \big(  \bx^k - \bM^{-1} \bA^\top \bp^k \big)  \\
\bp^{k+1 } &:= &  \prox_{g^* }^{\bXi}  ( \bp^k + \bXi^{-1}  \bA   \bxtilde^{k } ) \\
\bx^{k+1}  &:= &  \bxtilde^k -  \bM^{-1} \bA^\top ( \bp^{k+1}  -\bp^k)    \end{array} \right.   $  \\
 \hline
   \tabincell{l}{ \texttt{PDS-VII} \\ \cite[Eq.(32)]{fxue_gopt} }
 & $   \left\lfloor  \begin{array}{lll}
\bxtilde^{k } &:= & \prox_f^\bM \big(  \bx^k - \bM^{-1}  \bA^\top \bp^k \big) \\
\bptilde^{k } &:= & \prox_{g^*}^{\bXi}  ( \bp^k + 
\bXi^{-1} \bA   \bx^k )  \\
\bx^{k+1}  &:= & \bxtilde^k -  \bM^{-1} \bA^\top (\bptilde^k - \bp^k)   \\
  \bp^{k+1} &:= & \bptilde^k + \bXi^{-1}  \bA (\bxtilde^k - \bx^k)  \end{array}   \right. $  \\
    \Xhline{1.2pt}  
     \end{tabular}  } 
\vskip 0.5em
\end{table}

\begin{table} [h!] \label{table_pds}
\centering
\caption{The PPA reinterpretations of the proposed PDS algorithms }
\vspace{-.5em}
\hspace*{-.2cm}
\resizebox{.97\columnwidth}{!} {
\begin{tabular}{||c||c|c|c|c||} 
    \Xhline{1.2pt}  
schemes & $\bc$ & $\cA$ & $\cQ$ & $\cM$  \\
\hline
\texttt{PDS-I} &
\multirow{7}{*} {  \tabincell{c}{ \\ \\  \\ \\ \\  $\begin{bmatrix}
\bx \\ \bp \end{bmatrix} $ } } & 
\multirow{7}{*}{  \tabincell{c}{ \\ \\   \\ \\  \\  $\begin{bmatrix}
\partial f &   \bA^\top   \\
-\bA & \partial g^* \end{bmatrix}$ }  } &  
$ \begin{bmatrix}   \bM  & -\bA^\top  \\
-\bA  &  \bXi   \end{bmatrix}   $   & 
\multirow{2}{*}{   $\I_{M+N}  $  }   \\ 
 \cline{1-1}   \cline{4-4} 
\texttt{PDS-II} & &  &  
$ \begin{bmatrix}   \bM   &  \bA^\top    \\
 \bA  &  \bXi \end{bmatrix} $   &    \\ 
 \cline{1-1}   \cline{4-5} 
\texttt{PDS-III} & &  &  
$  \begin{bmatrix}
\bM   & \bf 0     \\  \bA   &  \bXi 
\end{bmatrix} $   &   $  \begin{bmatrix}  
 \I_N - \bM^{-1} \bA^\top \bXi^{-1}\bA  & -\bM^{-1} \bA^\top    \\    \bf 0   & \I_{M} \end{bmatrix}  $    \\ 
 \cline{1-1}   \cline{4-5} 
\texttt{PDS-IV} & &  &  
$  \begin{bmatrix}
\bM   & -\bA^\top    \\  \bf 0   &  \bXi 
\end{bmatrix} $   & 
 $  \begin{bmatrix}
\I_N  &  \bf 0    \\    \bXi^{-1} \bA   & 
\I_{M} - \bXi^{-1}   \bA \bM^{-1} \bA^\top 
\end{bmatrix}  $      \\ 
 \cline{1-1}   \cline{4-5} 
\texttt{PDS-V}  & &  &  
$  \begin{bmatrix}
\bM   & \bf 0     \\  \bA   &  \bXi 
\end{bmatrix} $   & 
 $  \begin{bmatrix}  \I_N  & \bf 0    \\  
 \bXi^{-1} \bA   & \I_{M} \end{bmatrix}  $   \\ 
  \cline{1-1}   \cline{4-5} 
\texttt{PDS-VI} & &  &  
$  \begin{bmatrix} \bM   & -\bA^\top    \\
 \bf 0   &  \bXi \end{bmatrix} $   & 
 $  \begin{bmatrix}
\I_N  &  -\bM^{-1} \bA^\top    \\  
  \bf 0   & \I_{M}  \end{bmatrix}  $   \\ 
 \cline{1-1}   \cline{4-5} 
\texttt{PDS-VII}  & &  &  
$ \begin{bmatrix}   \bM   & -\bA^\top    \\
 \bA  &  \bXi \end{bmatrix} $   &  $ \begin{bmatrix}
\I_N  & -\bM^{-1} \bA^\top   \\
\bXi^{-1}   \bA  & \I_M    \end{bmatrix}  $    \\ 
    \Xhline{1.2pt}  
     \end{tabular}  } 
\vskip 0.5em
\end{table}

\begin{table} [h!] \label{table_pds_sg}
\centering
\caption{The corresponding $\cS$ and $\cG$ of the proposed PDS algorithms }
\vspace{-.5em}
\hspace*{-.2cm}
\resizebox{.97\columnwidth}{!} {
\begin{tabular}{||c||c|c||c||} 
    \Xhline{1.2pt}  
schemes & $\cS$ & $\cG$ &  convergence  condition  \\
\hline
\texttt{PDS-I} &   $ \begin{bmatrix}   \bM  & -\bA^\top  \\
-\bA  &  \bXi   \end{bmatrix}   $   & $ \begin{bmatrix}   \bM  & -\bA^\top  \\
-\bA  &  \bXi   \end{bmatrix}   $ & 
\multirow{4}{*}{  \tabincell{c}{ \\ $\bM,\bXi \in \bbS_{++}$ \\ \\ $\bM \succ  \bA^\top \bXi^{-1} \bA $ \\
or $\bXi \succ  \bA \bM^{-1} \bA^\top $  }  }  \\ 
 \cline{1-3}   
\texttt{PDS-II} & $ \begin{bmatrix}   \bM   &  \bA^\top    \\
 \bA  &  \bXi \end{bmatrix} $   & $ \begin{bmatrix}   \bM   &  \bA^\top    \\  \bA  &  \bXi \end{bmatrix} $   &   \\ 
 \cline{1-3}   
\texttt{PDS-III} & $   \begin{bmatrix}
\bM^{-1}  & - \bM^{-1} \bA^\top \bXi^{-1}    \\  
 - \bXi^{-1} \bA \bM^{-1} & \bXi^{-1}   \end{bmatrix}^{-1} $   & 
 $  \begin{bmatrix}
\bM  +  \bA^\top \bXi^{-1} \bA &  \bA^\top     \\  
 \bA & \bXi   \end{bmatrix}  $    &    \\ 
 \cline{1-3}    
\texttt{PDS-IV} & $  \begin{bmatrix}
\bM^{-1}  & \bM^{-1} \bA^\top \bXi^{-1}    \\  
 \bXi^{-1} \bA \bM^{-1} & \bXi^{-1}   \end{bmatrix}^{-1} $   & 
 $   \begin{bmatrix}
\bM  & -\bA^\top     \\  
-\bA & \bXi + \bA \bM^{-1} \bA^\top   \end{bmatrix} $    &     \\ 
 \hline  
\texttt{PDS-V}  & \multirow{3}{*}{ \tabincell{c}{ \\ \\ $   \begin{bmatrix}  \bM  & \bf 0   \\
 \bf 0    & \bXi  \end{bmatrix} $ } }  & 
 $  \begin{bmatrix}
\bM - \bA^\top \bXi^{-1} \bA & \bf 0    \\  
 \bf 0    & \bXi   \end{bmatrix}   $    & 
  \tabincell{c}{      $\bM,\bXi  \in \bbS_{++}$ \\    $ \bM \succ \bA^\top \bXi^{-1} \bA $  }   \\ 
  \cline{1-1} \cline{3-4}
\texttt{PDS-VI} &   & 
 $ \begin{bmatrix} \bM  & \bf 0    \\  
 \bf 0    & \bXi - \bA\bM^{-1} \bA^\top  \end{bmatrix}    $    &  \tabincell{c}{      $\bM,\bXi  \in \bbS_{++}$ \\    $ \bXi \succ \bA \bM^{-1} \bA^\top $  }  \\ 
  \cline{1-1} \cline{3-4}
\texttt{PDS-VII}  &    &  $\begin{bmatrix}
\bM - \bA^\top \bXi^{-1} \bA & \bf 0    \\  
 \bf 0    & \bXi - \bA\bM^{-1} \bA^\top     \end{bmatrix}  $    &   \tabincell{c}{ $\bM,\bXi  \in \bbS_{++}$ \\ $\bM \succ    \bA^\top \bXi^{-1} \bA$ \\
    $ \bXi  \succ  \bA \bM^{-1} \bA^\top   $ }  \\ 
    \Xhline{1.2pt}  
     \end{tabular}  } 
\vskip 0.5em
\end{table}

The connections of the proposed PDS algorithms to the previous works, e.g. \cite[Algorithms 5.1 and 5.2]{condat_2013},  \cite[Theorems 3.1 and 4.2]{plc_2012} and 
 \cite[Algorithm 2.1]{bot_jmiv_2014}, have been discussed in \cite[Sect. 5]{fxue_gopt}.

\subsection{The generalized Bregman distance and ergodic primal-dual gap} 
\label{sec_pds_gap}
Similar to Sect. \ref{sec_lag_bregman} and \ref{sec_lag_gap}, the unified PPA framework also facilitates the gap analysis for the PDS algorithms.  
\begin{lemma} \label{l_gap_pd}
Given the primal-dual form $\cL(\bx,\bp)$ as \eqref{pd}, consider all the PDS  schemes listed in Table 4, where $\bctilde^k = (\bxtilde^k, \bptilde^k) $ denotes the proximal output, when the schemes are interpreted by the PPA (shown in Table 5). Then,  the following holds, $\forall \bc =(\bx, \bp) \in \R^N   \times \R^M$:

{\rm (i)} $\bPi(\bctilde^k, \bc) \le \big  \langle \cQ (\bctilde^k - \bc^k)  \big| \bc -\bctilde^k  \big \rangle$,

{\rm (ii)} $\bPi  \big( \frac{1}{k}  \sum_{i=0}^{k-1} \bctilde^i, \bc  \big)  \le \frac{1}{2k} \big\| \bc^0 - \bc \big\|_\cS^2$.
\end{lemma}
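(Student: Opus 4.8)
The plan is to mirror the proof of Lemma~\ref{l_gap_lag} almost verbatim, only replacing the augmented-Lagrangian three-block setting by the two-block primal--dual one: with $\bc=(\bx,\bp)$ and $\bc'=(\bx',\bp')$ one takes $\bPi(\bc,\bc')=\cL(\bx,\bp')-\cL(\bx',\bp)$ for $\cL$ as in \eqref{pd}, and $\cA=\begin{bmatrix}\partial f&\bA^\top\\-\bA&\partial g^*\end{bmatrix}$ as in Table~5.

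For part~(i), I would first observe that, for every one of the seven schemes in Table~4, the proximal step is the common PPA inclusion
\[
\begin{bmatrix}\bf 0\\\bf 0\end{bmatrix}\in\begin{bmatrix}\partial f&\bA^\top\\-\bA&\partial g^*\end{bmatrix}\begin{bmatrix}\bxtilde^k\\\bptilde^k\end{bmatrix}+\begin{bmatrix}\text{---}\,\bQ_1\,\text{---}\\\text{---}\,\bQ_2\,\text{---}\end{bmatrix}(\bctilde^k-\bc^k),
\]
where $\bQ_1,\bQ_2$ are the two block-rows of the matching $\cQ$ in Table~5; this is precisely the content of the table. Componentwise this gives $-\bA^\top\bptilde^k-\bQ_1(\bctilde^k-\bc^k)\in\partial f(\bxtilde^k)$ and $\bA\bxtilde^k-\bQ_2(\bctilde^k-\bc^k)\in\partial g^*(\bptilde^k)$. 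Inserting these subgradients into the subgradient inequalities for the convex functions $f$ and $g^*$ yields $f(\bxtilde^k)-f(\bx)\le\langle\bA^\top\bptilde^k\,|\,\bx-\bxtilde^k\rangle+\langle\bQ_1(\bctilde^k-\bc^k)\,|\,\bx-\bxtilde^k\rangle$ and $g^*(\bptilde^k)-g^*(\bp)\le-\langle\bA\bxtilde^k\,|\,\bp-\bptilde^k\rangle+\langle\bQ_2(\bctilde^k-\bc^k)\,|\,\bp-\bptilde^k\rangle$. Since $\bPi(\bctilde^k,\bc)=f(\bxtilde^k)-f(\bx)+g^*(\bptilde^k)-g^*(\bp)+\langle\bp\,|\,\bA\bxtilde^k\rangle-\langle\bptilde^k\,|\,\bA\bx\rangle$, adding the two inequalities makes the bilinear part collapse, because $\langle\bA^\top\bptilde^k\,|\,\bx-\bxtilde^k\rangle-\langle\bA\bxtilde^k\,|\,\bp-\bptilde^k\rangle+\langle\bp\,|\,\bA\bxtilde^k\rangle-\langle\bptilde^k\,|\,\bA\bx\rangle=0$, and what remains is exactly $\bPi(\bctilde^k,\bc)\le\langle\bQ_1(\bctilde^k-\bc^k)\,|\,\bx-\bxtilde^k\rangle+\langle\bQ_2(\bctilde^k-\bc^k)\,|\,\bp-\bptilde^k\rangle=\langle\cQ(\bctilde^k-\bc^k)\,|\,\bc-\bctilde^k\rangle$. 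This is actually shorter than the Lagrangian case, since here the skew-symmetric coupling is carried entirely by $\langle\bp\,|\,\bA\bx\rangle$ and there is no auxiliary equality block (the former third row of \eqref{q3}) to invoke.

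For part~(ii), I would reuse the algebraic identity buried in the proof of Lemma~\ref{l_gppa}-(i) --- which uses only the update rules \eqref{gppa} and $\cS\in\bbS_{++}$, not $\bcstar\in\zer\cA$ --- to rewrite, for the arbitrary reference point $\bc$, $\langle\cQ(\bctilde^i-\bc^i)\,|\,\bc-\bctilde^i\rangle=\tfrac12\|\bc^i-\bc\|_\cS^2-\tfrac12\|\bc^{i+1}-\bc\|_\cS^2-\tfrac12\|\bc^i-\bc^{i+1}\|_{\cM^{-\top}\cG\cM^{-1}}^2$. Combining with part~(i) and discarding the last term (non-negative because $\cG\in\bbS_{++}$) gives $\bPi(\bctilde^i,\bc)\le\tfrac12\|\bc^i-\bc\|_\cS^2-\tfrac12\|\bc^{i+1}-\bc\|_\cS^2$; summing over $i=0,\dots,k-1$ telescopes to $\sum_{i=0}^{k-1}\bPi(\bctilde^i,\bc)\le\tfrac12\|\bc^0-\bc\|_\cS^2$, and since $\bc'\mapsto\bPi(\bc',\bc)$ is affine (hence convex), Jensen's inequality yields $\bPi\big(\tfrac1k\sum_{i=0}^{k-1}\bctilde^i,\bc\big)\le\tfrac1k\sum_{i=0}^{k-1}\bPi(\bctilde^i,\bc)\le\tfrac1{2k}\|\bc^0-\bc\|_\cS^2$.

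I expect the only genuine obstacle to be the bookkeeping in part~(i) for the rows of Table~5 with $\cM\ne\cI$ (\texttt{PDS-III}, \texttt{PDS-IV}, \texttt{PDS-VII}): one must confirm that the auxiliary iterates $\bxtilde^k,\bptilde^k$ produced by those schemes really are the proximal output of the displayed PPA inclusion and that the listed $\cQ$ splits into the block-rows $\bQ_1,\bQ_2$ as claimed, so that the subgradient substitutions are legitimate. Once this is checked --- it is the same verification already carried out to fill Table~5 --- the cancellation of the bilinear terms and the telescoping in part~(ii) are entirely routine and uniform across the whole table.
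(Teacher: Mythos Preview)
Your proposal is correct and follows essentially the same route as the paper: the same componentwise reading of the PPA inclusion, the same subgradient inequalities for $f$ and $g^*$, the same bilinear cancellation for part~(i), and the same telescoping-plus-Jensen argument (borrowed from Lemma~\ref{l_gap_lag}-(ii)) for part~(ii). One small slip: $\bc'\mapsto\bPi(\bc',\bc)$ is not affine---it contains $f(\bx')+g^*(\bp')$---but it is convex, which is all Jensen needs, so the argument stands.
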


\begin{proof}
(i) First, note that the proximal step of all the PDS  schemes listed in Table 4 can be written as:
\[
\begin{bmatrix}
\bf 0 \\ \bf 0  \end{bmatrix} \in 
\begin{bmatrix}
\partial f &  \bA^\top \\
-\bA  & \partial g^*   \end{bmatrix}
\begin{bmatrix}
\bxtilde^k \\  \bptilde^k \end{bmatrix} 
+ \begin{bmatrix}
\text{---}\bQ_1\text{---} \\ 
\text{---}\bQ_2\text{---}   \end{bmatrix} 
(\bctilde^k - \bc^k ),  
\]
which is
\be \label{q33}
\left\lfloor \begin{array}{llll}
\bf 0  & \in  & \partial f(\bxtilde^k) +\bA^\top \bptilde^k  + \bQ_1 (\bctilde^k - \bc^k), \\ 
\bf 0  & \in  & \partial g^*(\bptilde^k)  -  \bA\bxtilde^k  + \bQ_2 (\bctilde^k - \bc^k). 
 \end{array}  \right.
\ee
Then, by convexity of $f$ and $g^*$, we develop:
\begin{eqnarray}
f(\bx) & \ge & f(\bxtilde^k) + \langle \partial f(\bxtilde^k) | \bx - \bxtilde^k \rangle 
\nonumber \\
& = &  f(\bxtilde^k) - \langle \bA^\top \bptilde^k  | \bx - \bxtilde^k \rangle
- \langle \bQ_1(\bctilde^k - \bc^k)  | \bx - \bxtilde^k \rangle,
\quad \text{by \eqref{q33}} \nonumber
\end{eqnarray}
and
\begin{eqnarray}
g^*(\bp) & \ge & g^*(\bptilde^k) + \langle \partial g^*(\bptilde^k) | \bp - \bptilde^k \rangle 
\nonumber \\
& = &  g^*(\batilde^k) + \langle  \bA \bxtilde^k  | \bp - \bptilde^k \rangle
- \langle \bQ_2(\bctilde^k - \bc^k)  | \bp - \bptilde^k \rangle.
\quad \text{by \eqref{q33}} \nonumber
\end{eqnarray}
Summing up both inequalities yields
\begin{eqnarray}
f(\bx) +g^*(\bp) - f(\bxtilde^k) - g^*(\bptilde^k) 
&\ge & - \langle \bA^\top \bptilde^k  | \bx - \bxtilde^k \rangle
- \langle \bQ_1(\bctilde^k - \bc^k)  | \bx - \bxtilde^k \rangle
\nonumber \\ 
& + & \langle \bA \bxtilde^k  | \bp - \bptilde^k \rangle
- \langle \bQ_2(\bctilde^k - \bc^k)  | \bp - \bptilde^k \rangle.
\nonumber
\end{eqnarray}
Finally, we have
\begin{eqnarray}
&& \cL(\bxtilde^k, \bp) - \cL(\bx, \bptilde^k)
\nonumber \\
& =& f(\bxtilde^k) + g^*(\bptilde^k)  - f(\bx) - g^*(\bp) 
+\langle \bp | \bA\bxtilde^k  \rangle
-\langle \bptilde^k | \bA\bx  \rangle
\nonumber \\
&\le &   \langle \bQ_1(\bctilde^k - \bc^k)  | \bx - \bxtilde^k \rangle +  \langle \bQ_2(\bctilde^k - \bc^k)  | \bp - \bptilde^k \rangle
\nonumber \\ 
& + & \langle \bp | \bA\bxtilde^k  \rangle
-\langle \bptilde^k | \bA\bx  \rangle
+\langle \bA^\top \bptilde^k  | \bx - \bxtilde^k \rangle
-\langle \bA \bxtilde^k  | \bp - \bptilde^k \rangle
\nonumber \\ 
& = & \langle \bQ_1(\bctilde^k - \bc^k)  | \bx - \bxtilde^k \rangle +  \langle \bQ_2(\bctilde^k - \bc^k)  | \bp - \bptilde^k \rangle
\nonumber \\ 
& = & \langle \cQ (\bctilde^k - \bc^k)  | \bc - \bctilde^k \rangle.
\nonumber 
\end{eqnarray}

(ii) similar to the proof of Lemma \ref{l_gap_lag}-(ii).
\end{proof}

Similar to the Lagrangian schemes,  for the PDS schemes,  $\bPi(\bc, \bcstar) $ with  $ \bcstar \in \zer \cA$  essentially belongs to the generalized Bregman distance associated with $q(\bc):=f(\bx) +g^*(\bp)$ between any point $\bc= (\bx,  \bp)$ and a saddle point  $\bcstar= (\bxstar, \bpstar )$, which satisfies $0 \le D_q^\flat(\bc,\bc^\star) \le \bPi(\bc, \bcstar)  \le  D_q^\sharp (\bc,\bc^\star)$. Indeed, the generalized Bregman distance is given as
\begin{eqnarray}
0 & \le & D_q^\flat(\bc,\bcstar) = \inf_{\bv \in \partial q(\bcstar)} q(\bc) - q(\bcstar) + \langle \bv | \bcstar - \bc \rangle 
\nonumber \\
&=&  f(\bx) - f(\bxstar) + \inf_{\bv \in \partial f(\bxstar)} \langle \bv  |  \bxstar - \bx \rangle  + g^*(\bp) - g^*(\bpstar) +\inf_{\bt \in \partial g^*(\bpstar)} \langle \bt | \bpstar - \bp \rangle 
\nonumber \\
& \le &  f(\bx) - f(\bxstar) + \langle \bA^\top \bpstar  | \bx - \bxstar \rangle  + g^*(\bp) - g^*(\bpstar) - \langle   \bA \bxstar  | \bp - \bpstar \rangle 
\nonumber \\
&=&  f(\bx) - f(\bxstar) + g^*(\bp) - g^*(\bpstar)  + \langle  \bA \bx |\bpstar   \rangle  - \langle   \bA \bxstar  | \bp  \rangle 
\nonumber \\
&=&  \cL(\bx, \bpstar) - \cL(\bxstar,  \bp)
\nonumber \\
& =& \bPi(\bc, \bcstar) \le   D_q^\sharp (\bc,\bcstar). 
\nonumber 
 \end{eqnarray}

Then, we obtain the  convergence rate of $\bPi(\bc^k, \bcstar) $ in an ergodic sense. 
\begin{theorem} \label{t_bregman_pds}
For all the PDS  algorithms shown in Table 4, the generalized  Bregman distance generated by $f(\bx)+g^*(\bp)$ between the ergodic point $\frac{1}{k}  \sum_{i=0}^{k-1} \bctilde^i$ and a saddle point $\bcstar \in \zer \cA$ has a rate of $\cO(1/k)$:
\[
0\le \bPi  \bigg( \frac{1}{k}  \sum_{i=0}^{k-1} \bctilde^i, \bcstar \bigg)  \le \frac{1}{2k} \big\| \bc^0 - \bcstar \big\|_\cS^2,
\]
where $\{\bctilde^i\}_{i\in\N}$ and $\cS$ are defined in Lemma \ref{l_gap_pd} and \ref{l_gppa}.
\end{theorem}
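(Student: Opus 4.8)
The plan is to follow the template of the proof of Theorem~\ref{t_bregman_lag} essentially verbatim, since both required ingredients are already in place: the ergodic estimate of Lemma~\ref{l_gap_pd}, and the sandwich of $\bPi(\cdot,\bcstar)$ between the two generalized Bregman distances displayed just above the theorem. The statement splits into a right inequality (the $\cO(1/k)$ decay) and a left inequality (non-negativity), and I would dispatch them separately.

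For the right inequality I would invoke Lemma~\ref{l_gap_pd}-(ii), which is valid for every $\bc=(\bx,\bp)\in\R^N\times\R^M$, and specialize it to $\bc=\bcstar\in\zer\cA$; this gives immediately $\bPi\big(\frac{1}{k}\sum_{i=0}^{k-1}\bctilde^i,\bcstar\big)\le\frac{1}{2k}\|\bc^0-\bcstar\|_\cS^2$, with $\cS=\cQ\cM^{-1}$ as in Lemma~\ref{l_gppa}. No computation remains at this stage: Lemma~\ref{l_gap_pd}-(ii) is itself obtained from part~(i) together with Lemma~\ref{l_gppa}-(i) (telescoping $\sum_{i=0}^{k-1}\bPi(\bctilde^i,\bc)\le\frac12\|\bc^0-\bc\|_\cS^2$) and the convexity of $\bc'\mapsto\bPi(\bc',\bc)$, which lets one pass to the ergodic average by Jensen's inequality.

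For the left inequality I would reuse the chain $0\le D_q^\flat(\bc,\bcstar)\le\bPi(\bc,\bcstar)$ from the display preceding the theorem, with $q(\bc):=f(\bx)+g^*(\bp)$. The point is that $\bcstar=(\bxstar,\bpstar)\in\zer\cA$ encodes the optimality relations $-\bA^\top\bpstar\in\partial f(\bxstar)$ and $\bA\bxstar\in\partial g^*(\bpstar)$, so that $\bPi(\cdot,\bcstar)$ dominates $D_q^\flat(\cdot,\bcstar)$, which is itself non-negative by convexity of $q$; evaluating this at $\bc=\frac{1}{k}\sum_{i=0}^{k-1}\bctilde^i$ yields $\bPi\big(\frac{1}{k}\sum_{i=0}^{k-1}\bctilde^i,\bcstar\big)\ge 0$.

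The only genuine subtlety — and it is already absorbed into Lemma~\ref{l_gap_pd}-(i) — is the cancellation of the four bilinear $\bA$-terms, namely $\langle\bp\,|\,\bA\bxtilde^k\rangle-\langle\bptilde^k\,|\,\bA\bx\rangle+\langle\bA^\top\bptilde^k\,|\,\bx-\bxtilde^k\rangle-\langle\bA\bxtilde^k\,|\,\bp-\bptilde^k\rangle$, which must vanish for the primal-dual operator $\cA=\begin{bmatrix}\partial f&\bA^\top\\-\bA&\partial g^*\end{bmatrix}$; but since Lemma~\ref{l_gap_pd} already certifies this, the proof of Theorem~\ref{t_bregman_pds} itself reduces to two one-line citations, and I expect no obstacle beyond the bookkeeping of specializing $\bc$ to $\bcstar$.
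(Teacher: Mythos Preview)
Your proposal is correct and mirrors the paper's own argument exactly: the paper's proof simply says ``similar to Theorem~\ref{t_bregman_lag}'', which in turn obtains the upper bound by specializing Lemma~\ref{l_gap_pd}-(ii) to $\bc=\bcstar$ and the non-negativity from the sandwich $0\le D_q^\flat(\bc,\bcstar)\le\bPi(\bc,\bcstar)$ displayed just before the theorem. Your additional remarks on the telescoping/Jensen mechanism behind Lemma~\ref{l_gap_pd}-(ii) and the bilinear cancellation inside part~(i) are accurate but go beyond what the paper spells out.
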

The proof is similar to Theorem \ref{t_bregman_lag}.

\vskip.2cm
Likewise, for the class of PDS algorithms, for given sets $B_1 \subset \R^N$ and $B_2 \subset \R^M$, the {\it primal-dual gap} function restricted to $B_1 \times B_2$ is defined as:
\be \label{gap_pd}
\bPsi_{B_1 \times B_2} (\bc) = \sup_{\bp' \in B_2} \cL(\bx,\bp') -  \inf_{\bx'\in B_1 } \cL(\bx', \bp),
\ee 
which has the upper bound:
\begin{corollary} \label{c_gap_pds}
Under the conditions of Theorem \ref{t_bregman_pds}, if the set    $ B_1 \times B_2$ is bounded, the ergodic primal-dual gap defined as \eqref{gap_pd} has the upper bound:
\[
\bPsi_{B_1 \times B_2}   \bigg( \frac{1}{k}  \sum_{i=0}^{k-1} \bctilde^i  \bigg)  \le \frac{1}{2k}
\sup_{\bc \in B_1 \times B_2 } \big\| \bc^0 - \bc \big\|_\cS^2.
\] 
Furthermore, $\bPsi_{B_1 \times B_2} ( \frac{1}{k}  \sum_{i=0}^{k-1} \bctilde^i  ) \ge 0$, if the set $B_1 \times B_2$ contains a saddle point $\bcstar = (\bxstar, \bpstar) \in \zer \cA$.
\end{corollary}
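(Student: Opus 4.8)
The plan is to mirror the argument of Corollary~\ref{c_gap_lag} verbatim, now relative to the primal-dual form $\cL(\bx,\bp)$ of \eqref{pd} and to the estimate supplied by Lemma~\ref{l_gap_pd} in place of Lemma~\ref{l_gap_lag}. First I would record the elementary identity (the primal-dual analogue of the one displayed just after \eqref{gap}) that, for the PDS quantity $\bPi(\bc,\bc') := \cL(\bx,\bp') - \cL(\bx',\bp)$,
\[
\bPsi_{B_1 \times B_2}(\bc) = \sup_{\bc' \in B_1 \times B_2} \bPi(\bc,\bc'),
\]
which holds because the right-hand side decouples: the supremum over $\bp'$ acts only on the $\cL(\bx,\bp')$ summand and the infimum over $\bx'$ only on the $\cL(\bx',\bp)$ summand, so no coupling between the primal and dual test variables survives.

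Next I would invoke Lemma~\ref{l_gap_pd}-(ii), which is valid for every $\bc = (\bx,\bp) \in \R^N \times \R^M$, with $\bctilde^i$ the proximal outputs of the PDS scheme under consideration and $\cS$ the matrix from Lemma~\ref{l_gppa}:
\[
\bPi\Big( \tfrac{1}{k} \sum_{i=0}^{k-1} \bctilde^i,\ \bc \Big) \le \frac{1}{2k} \big\| \bc^0 - \bc \big\|_\cS^2 .
\]
Applying the preceding identity with $\bc$ replaced by the ergodic average $\tfrac{1}{k}\sum_{i=0}^{k-1}\bctilde^i$ and then taking the supremum over $\bc \in B_1 \times B_2$ on both sides yields the claimed bound $\bPsi_{B_1 \times B_2}(\tfrac1k\sum_{i=0}^{k-1}\bctilde^i) \le \tfrac{1}{2k}\sup_{\bc \in B_1 \times B_2}\|\bc^0 - \bc\|_\cS^2$; boundedness of $B_1 \times B_2$ guarantees the right-hand side is finite (indeed, since $\cS \in \bbS_{++}$ the supremum is attained), so the estimate is genuinely $\cO(1/k)$.

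For the non-negativity claim I would appeal to Theorem~\ref{t_bregman_pds}: if a saddle point $\bcstar = (\bxstar,\bpstar) \in \zer \cA$ lies in $B_1 \times B_2$, then $\bPsi_{B_1 \times B_2}(\bc) \ge \bPi(\bc,\bcstar) \ge 0$, the last inequality being the already-established non-negativity of the generalized Bregman distance $D_q^\flat$ with $q(\bx,\bp) = f(\bx) + g^*(\bp)$. There is no genuinely hard step here; the only point needing a moment's care is verifying that the supremum and infimum really separate in $\bPi$, which is immediate from the bilinear-plus-separable structure $\cL(\bx,\bp) = f(\bx) + \langle \bp | \bA\bx\rangle - g^*(\bp)$, exactly as in the Lagrangian case.
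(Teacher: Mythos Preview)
Your proposal is correct and follows essentially the same approach as the paper, which simply states that the proof is similar to Corollary~\ref{c_gap_lag}. You have spelled out the analogous steps explicitly: use the identity $\bPsi_{B_1 \times B_2}(\bc) = \sup_{\bc' \in B_1 \times B_2} \bPi(\bc,\bc')$, invoke Lemma~\ref{l_gap_pd}-(ii), pass to the supremum, and deduce non-negativity from Theorem~\ref{t_bregman_pds}.
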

The proof is similar to Corollary \ref{c_gap_lag}.

\begin{remark}
For  \texttt{PDS-I} and \texttt{II} with corresponding $\cM=\cI$, Theorem \ref{t_bregman_pds} and Corollary \ref{c_gap_pds} can be simplified as  $\bPi  \big( \frac{1}{k}  \sum_{i=1}^{k} \bc^i, \bcstar  \big)  \le  \frac{1}{2k} \big\| \bc^0 - \bcstar \big\|_\cQ^2$ and 
$\bPsi_{B_1 \times B_2} $ $ \big( \frac{1}{k}  \sum_{i=1}^{k} \bc^i  \big)  \le \frac{1}{2k}
\sup_{\bc \in B_1 \times B_2 } \big\| \bc^0 - \bc \big\|_\cQ^2$.
\end{remark}

\begin{remark} \label{r_gap_pds}
Similarly to Remark \ref{r_gap_lag}, under additional conditions on $f$ and $g^*$, one can obtain {\it the convergence rate of $\cO(1/k)$ of the sequence of the primal value of \eqref{primal}, evaluated at the ergodic averaging point $\frac{1}{k} \sum_{i=0}^{k-1} \bctilde^i$}, namely, it holds that:
\be \label{gap_pds}
   f\bigg(\frac{1}{k}  \sum_{i=0}^{k-1} \tilde{\bx}^i\bigg) 
+ g\bigg( \bA \Big( \frac{1}{k}  \sum_{i=0}^{k-1} \tilde{\bx}^i \Big) \bigg) -  f(\bxstar) - g(\bA \bxstar)  
 \le \frac{1}{2k}
\sup_{\bc \in B_1 \times B_2} \big\| \bc^0 - \bc \big\|_\cS^2.
\ee

Indeed, if $\dom f$ and $\dom g^*$ are bounded, then we can simply take the sets $B_1 = \dom f$ and $B_2=\dom g^*$.    Denoting the ergodic averaging points by
 $\bchat^k = \frac{1}{k}  \sum_{i=0}^{k-1} \tilde{\bc}^i$
 ($\bxhat^k$ and $\bphat^k$ are defined similarly),
 using Fenchel-Young inequality \cite[Proposition 13.15]{plc_book}, we develop
\begin{eqnarray}
&& \bPsi_{B_1 \times B_2}   \big( \bchat^k  \big)  
\nonumber \\ 
&=& \sup_{\bp' \in B_2} \cL \big(
\bxhat^k ,  \bp' \big) - \inf_{\bx' \in B_1} \cL \big(
\bx', \bphat^k  \big) 
\nonumber \\ 
&=& \sup_{\bp' \in B_2} f\big( \bxhat^k \big) 
+\big\langle \bp' \big| \bA \bxhat^k  \big\rangle - g^*(\bp')
 -   \inf_{ \bx'   \in B_1} \big( f(\bx') +
\big\langle \bphat^k  \big| \bA \bx'  \big\rangle  - g^*\big( 
\bphat^k \big) \big)
\nonumber \\ 
& \ge &  f\big( \bxhat^k  \big) 
+ g\big( \bA \bxhat^k  \big) + g^*\big(  \bphat^k  \big) 
-  f(\bxstar) - \big\langle \bphat^k   \big| \bA \bxstar \big\rangle  
\nonumber \\ 
& \ge &  f\big( \bxhat^k  \big) 
+ g\big( \bA \bxhat^k  \big) -  f(\bxstar) - g(\bA \bxstar), 
\nonumber 
\end{eqnarray} 
which, combining with Corollary \ref{c_gap_pds}, yields \eqref{gap_pds}.
Still, \eqref{gap_pds} holds for all the PDS algorithms shown in Table 4.
\end{remark}

\subsection{Reductions of some PDHG algorithms}
\label{sec_red_pds}
By Corollary \ref{c_gppa}-(iii), \texttt{PDS-I} and \texttt{II} can be reduced to a simple resolvent \eqref{aa}, where $\bv^k = \cQ^{\frac{1}{2} }   \bc^k$, $\cQ$ is specified in Table 5 for \texttt{PDS-I} or \texttt{II}.

\paragraph{A degenerate case} In particular, if $\bA=\I_N$, $\bM=\I_N$, $\bXi=\I_N$, then, $\cQ=\begin{bmatrix}
\I_N & -\I_N \\ -\I_N &  \I_N 
\end{bmatrix}$ for \texttt{PDS-I} or $\cQ=\begin{bmatrix}
\I_N &  \I_N \\   \I_N &  \I_N 
\end{bmatrix}$ for \texttt{PDS-II}, which becomes degenerate. The convergence of this case, which is not covered by Table 6, can be answered by the degenerate analysis.   

As an example, let us consider \texttt{PDS-I}, which becomes
\be \label{er}
\left\lfloor  \begin{array}{l}
\bx^{k+1} := \prox_f \big( \bx^k -   \bp^k \big),  \\
\bp^{k+1} := \prox_{g^*}  \big( \bp^k +  2\bx^{k+1} - \bx^k  \big).
 \end{array}   \right.
\ee
The metric  $\cQ$ can be decomposed as  $\cQ=\cD \cD^\top =\begin{bmatrix}
\I_N \\  -\I_N  \end{bmatrix} \begin{bmatrix}
\I_N &  -\I_N  \end{bmatrix} $. Then, following the procedure similar to Proposition \ref{p_red_lag}, we obtain the reduced PPA as:
\[
\bv^{k+1} = \big( \cI + (\cD^\top \cA^{-1} \cD)^{-1} \big)^{-1} \bv^k, 
\]
where $\bv^k = \cD^\top \bc^k = \bx^k - \bp^k$.

The active variable of \eqref{er}  can also be identified without the degenerate PPA analysis. Indeed, from \eqref{er}, we have
\[
  \bx^{k+1} - \bp^{k+1}  =
   \prox_{f}  \big(   \bx^k -    \bp^{k} \big)
-   \prox_{g^*}  \big( \bp^k+2  \bx^{k+1} -    \bx^{k} \big).
\]
Denoting $\bv^k:=\bx^k-\bp^k$, it becomes
\begin{eqnarray}
  \bv^{k+1}  &  =&   \prox_{f}  \big(   \bv^k \big)
-   \prox_{g^*}  \big(  2  \bx^{k+1} -    \bv^{k} \big)
  \nonumber\\
  &=&  \prox_{f}  \big(   \bv^k \big)
-   \prox_{g^*}  \big(  2  \prox_{f} (\bv^k ) -    \bv^{k} \big)
  \nonumber\\
  &=& \Big( \prox_{f}  - \prox_{g^*} \circ (2  \prox_{f}-\cI)  \Big) (\bv^k)
  \nonumber\\
  &=& \Big( \prox_{f}  - (\cI - \prox_{g} )  \circ (2  \prox_{f}-\cI)  \Big) (\bv^k)
  \nonumber\\
  &=& \Big( \cI - \prox_{f} + \prox_{g}   \circ (2  \prox_{f}-\cI)  \Big) (\bv^k),
  \nonumber
\end{eqnarray}
which shows that \eqref{er} is essentially a DRS algorithm.

\section{Operator splitting based on mixed strategies}
\label{sec_mix}
Consider the hybrid strategy proposed in \cite[Sect. 6]{fxue_gopt}, which aims at minimizing \cite[Eq.(34)]{fxue_gopt}:
\be \label{obj_another}
\min_{\bx}   f(\bx) + g(\bA\bx)  + h (\bB \bx), 
\ee 
where $\bx \in \R^N$, $\bA: \R^N \mapsto \R^{M_1}$, 
$\bB: \R^N \mapsto \R^{M_2}$, 
$f: \R^{N} \mapsto \R\cup \{+\infty\}$, $g: \R^{M_1} \mapsto \R\cup \{+\infty\}$,  $h: \R^{M_2} \mapsto \R\cup \{+\infty\}$.  

\subsection{The hybrid schemes and their PPA interpretations}
\label{sec_mix_algo}
Taking Lagrangian of  $g$, and applying primal-dual to  $h$ in \eqref{obj_another} yields \cite[Eq(35)]{fxue_gopt}: 
\be \label{obj_mix}
\cL(\bx,\ba,\bb,\bp) := f(\bx) + g(\ba) + \bp^\top (\bA\bx - \ba) +   \bb^\top \bB \bx -  h^*(\bb), 
\ee 
or 
\be \label{obj_mix_aug}
\cL_\bTheta (\bx,\ba,\bb,\bp) := f(\bx) + g(\ba) + \bp^\top (\bA\bx - \ba) 
+\frac{1}{2} \big\|\bA - \ba\big\|_{\bTheta}^2 +   \bb^\top \bB \bx -  h^*(\bb), 
\ee 
where $\bp \in \R^{M_1}$, $\ba   \in \R^{M_1}$,  $\bb \in \R^{M_2}$, we devise  the  hybrid schemes based on the alternating optimization of \eqref{obj_mix} or \eqref{obj_mix_aug}, shown in Table 7. \texttt{MIX-I,III,IV} and \texttt{V} can be found in \cite[Sect. 6]{fxue_gopt}, and are extended to general proximal metrics here.

\begin{table} [h!] \label{table_mix_algo}
\centering
\caption{The proposed hybrid algorithms }
\vspace{-.5em}
\hspace*{-.2cm}
\resizebox{.97\columnwidth}{!} {
\begin{tabular}{|l|l|} 
    \Xhline{1.2pt}  
    name  & iterative scheme \\
\hline 
\tabincell{l}{ \texttt{MIX-I} \\ \cite[Eq.(36)]{fxue_gopt} }
 & $ \left\lfloor  \begin{array}{lll}
\bx^{k+1} & = & \prox_f^\bM\big( \bx^k - \bM^{-1} (\bA^\top \bp^k +\bB^\top \bb^k  )  \big) \\
  \ba^{k+1} & =  & \prox_{g}^{\bOmega} ( \ba^{k } + \bOmega^{-1}  \bp^k)    \\ 
  \bb^{k+1} & =  & \prox_{h^*}^{\bXi^{-1}} ( \bb^k + 
  \bXi \bB (2 \bx^{k+1} - \bx^k) )    \\ 
\bp^{k+1}  & = & \bp^k + \bTheta \big(  \bA (2  \bx^{k+1}
-\bx^k)  - (2  \ba^ {k+1} - \ba^k)  \big)  
 \end{array} \right.  $ \\  
 \hline
 \texttt{MIX-II}  & $ \left\lfloor  \begin{array}{lll}
\bx^{k+1} & = & \prox_f^\bM \big(  \bx^k -\bM^{-1} (  \bB^\top \bb^k + \bA^\top \bp^k ) \big)  \\
\bp^{k+1}  & := & \bp^k + \bTheta  \big(  \bA (2 \bx^{k+1}  - \bx^k )  - \ba^{k} \big)  \\ 
  \ba^{k+1} & =  & \prox_{g}^{ \bOmega } \big(
  \ba^k  +   \bOmega^{-1} (2 \bp^{k+1} -  \bp^k)  \big)   \\ 
  \bb^{k+1} & =  & \prox_{h^*}^{ \bXi^{-1}} \big( \bb^k + 
  \bXi \bB (2 \bx^{k+1} - \bx^k)  \big)    \\ 
 \end{array} \right. $ \\
 \hline
\tabincell{l}{ \texttt{MIX-III} \\ \cite[Eq.(39)]{fxue_gopt} }
 & $  \left\lfloor  \begin{array}{lll}
\bx^{k+1} &: = &  \prox_f^{\bM +\bA^\top \bTheta \bA} \big( (\bM+\bA^\top \bTheta\bA)^{-1} (\bM \bx^k +\bA^\top \bTheta \ba^k -   \bB^\top \bb^k - \bA^\top \bp^k ) \big) \\
\bp^{k+1}  & := & \bp^k + \bTheta  \big(  \bA  \bx^{k+1}   - \ba^k \big)  \\ 
  \ba^{k+1} & =  & \prox_{g}^{\bOmega}  ( \ba^k + \bOmega^{-1} (2\bp^{k+1} - \bp^k) )    \\ 
  \bb^{k+1} & =  & \prox_{h^*}^{\bXi^{-1}} (\bb^k + 
  \bXi \bB (2 \bx^{k+1} - \bx^k)  )    \\ 
 \end{array} \right.   $\\
 \hline
 \tabincell{l}{ \texttt{MIX-IV} \\ \cite[Eq.(40)]{fxue_gopt} }
 & $  \left\lfloor  \begin{array}{lll}
\bx^{k+1} & = &  \prox_f^\bM\big( \bx^k - \bM^{-1} (\bA^\top \bp^k +\bB^\top \bb^k  ) \big)  \\
  \ba^{k+1} & =  & \prox_{g}^{\bOmega+\bTheta}  \big( (\bOmega+\bTheta )^{-1} (\bOmega \ba^k +  \bp^{k }
  +  \bTheta \bA\bx^{k+1} ) \big)    \\ 
  \bb^{k+1} & =  & \prox_{h^*}^{\bXi^{-1} } (\bb^k + \bXi \bB \bx^{k +1}  ) +\bXi  \bB (\bx^{k+1} - \bx^k)    \\ 
\bp^{k+1} & = &\bp^k +  \bTheta  (2 \bA 
 \bx^{k+1} - \bA \bx^k   -  \ba^{k+1}  ) \\
 \end{array} \right.   $ \\
 \hline
 \tabincell{l}{ \texttt{MIX-V} \\ \cite[Eq.(37)]{fxue_gopt} }
  & $   \left\lfloor  \begin{array}{lll}
\bx^{k+1} & = &  \prox_f^\bM\big(  \bx^k - \bM^{-1} (\bA^\top \bp^k +\bB^\top \bb^k  ) \big)  \\
  \ba^{k+1} & =  & \prox_{g}^{\bOmega}  (\ba^k + \bOmega^{-1} \bp^{k } )    \\ 
  \bb^{k+1} & =  & \prox_{h^*}^{\bXi^{-1}} ( \bb^k+ \bXi \bB \bx^{k +1})  +\bXi  \bB (\bx^{k+1} - \bx^k)  \\ 
\bp^{k+1}  & = & \bp^k + \bTheta  \big(  \bA (2  \bx^{k+1}  -\bx^k)  - (2  \ba^ {k+1} - \ba^k)  \big)  \\
 \end{array} \right. $  \\
 \hline
 \texttt{MIX-VI}  & $  \left\lfloor  \begin{array}{lll}
\bx^{k+1} &: = &  \prox_f^{\bM +\bA^\top \bTheta \bA} \big( (\bM+\bA^\top \bTheta \bA)^{-1} (\bM \bx^k +\bA^\top \bTheta  \ba^k -   \bB^\top \bb^k - \bA^\top \bp^k ) \big) \\
  \ba^{k+1} & :=  & \prox_g^{\bOmega  } 
   \big( \ba^k + \bOmega^{-1} ( \bTheta \bA \bx^{k+1} - \bTheta  \ba^k   + \bp^k) \big)    \\ 
  \bb^{k+1 } & :=  & \prox_{h^*}^{\bXi^{-1}}  \big( \bb^k + \bXi \bB (2 \bx^{k+1 } - \bx^k  ) \big)   \\ 
\bp^{k+1}  & := & \bp^k + \bTheta  \big(  \bA  \bx^{k+1}  - \ba^{k +1} \big)  \\ 
 \end{array} \right.    $  \\
    \Xhline{1.2pt}  
     \end{tabular}  } 
\vskip 0.5em
\end{table}

\begin{table} [h!] \label{table_mix}
\centering
\caption{The PPA reinterpretations of the hybrid algorithms }
\vspace{-.5em}
\hspace*{-.2cm}
\resizebox{.97\columnwidth}{!} {
\begin{tabular}{||c||c|c|c|c||} 
    \Xhline{1.2pt}  
schemes & $\bc$ & $\cA$ & $\cQ$ & $\cM$  \\
\hline
\texttt{MIX-I} &
\multirow{6}{*} {  \tabincell{c}{ \\  \\  \\ \\ \\ \\ \\ \\ $\begin{bmatrix}
\bx \\ \ba \\ \bb \\  \bp \end{bmatrix} $ } } & 
\multirow{6}{*}{  \tabincell{c}{ \\  \\  \\ \\ \\ \\ \\ \\ 
  $ \begin{bmatrix}
\partial f & \bf 0  & \bB^\top & \bA^\top  \\
\bf 0 & \partial g  & \bf 0 & -\I_{M_1}   \\
-\bB  & \bf 0  & \partial h^*  & \bf 0   \\
-\bA  &  \I_{M_1}  &   \bf 0 &  \bf 0  
\end{bmatrix} $ }  } &  $ \begin{bmatrix}
\bM & \bf 0 & -\bB^\top  & - \bA^\top   \\
\bf 0 & \bOmega    & \bf 0 & \I_{M_1}  \\
-\bB & \bf 0  &   \bXi^{-1}   & \bf 0  \\
 -\bA &  \I_{M_1}   & \bf 0 & \bTheta^{-1}     
\end{bmatrix}  $ & \multirow{3}{*}{  \tabincell{c}{ 
\\  \\  \\ \\   $\I_{N+2M_1+M_2}$ } }  \\ 
 \cline{1-1}    \cline{4-4}  
\texttt{MIX-II}  & &  & 
$  \begin{bmatrix}
\bM  & \bf 0 & -\bB^\top  & -\bA^\top   \\
\bf 0 &  \bOmega  & \bf 0 & -\I_{M_1} \\
-\bB   & \bf 0  &   \bXi^{-1}   & \bf 0  \\
 -\bA &  -\I_{M_1}    & \bf 0 & \bTheta^{-1}
\end{bmatrix}  $  &    \\ 
 \cline{1-1}   \cline{4-4}  
\texttt{MIX-III} & &  & 
$ \begin{bmatrix}
 \bM & \bf 0 &  -\bB^\top & \bf 0   \\
\bf 0 & \bOmega & \bf 0   &  -\I_{M_1}   \\
 -\bB &  \bf 0   & \bXi^{-1}  & \bf 0    \\
\bf 0  & -\I_{M_1}   & \bf  0   &  \bTheta^{-1}  
\end{bmatrix}  $  &     \\ 
 \cline{1-1}   \cline{4-5} 
\texttt{MIX-IV} & &  & 
$  \begin{bmatrix}
\bM  & \bf 0 & -\bB^\top  & - \bA^\top   \\
\bf 0 & \bOmega  & \bf 0 & \bf 0  \\
\bf 0  & \bf 0  &   \bXi^{-1}   & \bf 0  \\
 \bf 0 &  \bf 0    & \bf 0 & \bTheta^{-1}    
\end{bmatrix}  $  & $ \begin{bmatrix}
\I_N  & \bf 0 & \bf 0  &  \bf 0   \\
\bf 0 & \I_{M_1}   & \bf 0 & \bf 0    \\
 \bXi \bB   & \bf 0  &   \I_{M_2}    & \bf 0  \\
 \bTheta \bA  &  \bf 0    & \bf 0 & \I_{M_1}   
\end{bmatrix} $    \\ 
 \cline{1-1}   \cline{4-5} 
\texttt{MIX-V} & &  & $  \begin{bmatrix}
\bM  & \bf 0 & -\bB^\top  & - \bA^\top   \\
\bf 0 & \bOmega  & \bf 0 & \I_{M_1}  \\
\bf 0  & \bf 0  &   \bXi^{-1}   & \bf 0  \\
 \bf 0 &  \bf 0    & \bf 0 & \bTheta^{-1}    
\end{bmatrix}  $    & 
 $ \begin{bmatrix}
\I_N  & \bf 0 & \bf 0 & \bf 0  \\
\bf 0 &   \I_{M_1}  & \bf 0  & \bf 0 \\
\bXi \bB   & \bf 0 &   \I_{M_2}  & \bf 0 \\
\bTheta \bA    & - \bTheta & \bf 0 &  \I_{M_1} 
\end{bmatrix}  $       \\ 
 \cline{1-1}   \cline{4-5} 
\texttt{MIX-VI}  & &  & $\begin{bmatrix}
 \bM  & \bf 0 & -\bB^\top  & \bf 0 \\
\bf 0 &   \bOmega  & \bf 0 & \bf 0  \\
-\bB   & \bf 0  &   \bXi^{-1}   & \bf 0  \\
 \bf 0 &  -\I_{M_1}   & \bf 0 & \bTheta^{-1}     
\end{bmatrix}  $   & 
 $ \begin{bmatrix}
\I_N  & \bf 0 & \bf 0 & \bf 0  \\
\bf 0 &   \I_{M_1}  & \bf 0  & \bf 0 \\
\bf 0   & \bf 0 &   \I_{M_2}  & \bf 0 \\
\bf 0   & - \bTheta  & \bf 0 &  \I_{M_1} 
\end{bmatrix}  $   \\
    \Xhline{1.2pt}  
     \end{tabular}  } 
\vskip 0.5em
\end{table}

\begin{table} [h!] \label{table_mix_sg}
\centering
\caption{The corresponding $\cS$ and $\cG$ of the hybrid algorithms }
\vspace{-.5em}
\hspace*{-.2cm}
\resizebox{.97\columnwidth}{!} {
\begin{tabular}{||c||c|c||c||} 
    \Xhline{1.2pt}  
schemes & $\cS$ & $\cG$ &  convergence condition \\
\hline
\texttt{MIX-I} &  $ \begin{bmatrix}
\bM & \bf 0 & -\bB^\top  & - \bA^\top   \\
\bf 0 & \bOmega    & \bf 0 & \I_{M_1}  \\
-\bB & \bf 0  &   \bXi^{-1}   & \bf 0  \\
 -\bA &  \I_{M_1}   & \bf 0 & \bTheta^{-1}     
\end{bmatrix}  $ &    $ \begin{bmatrix}
\bM & \bf 0 & -\bB^\top  & - \bA^\top   \\
\bf 0 & \bOmega    & \bf 0 & \I_{M_1}  \\
-\bB & \bf 0  &   \bXi^{-1}   & \bf 0  \\
 -\bA &  \I_{M_1}   & \bf 0 & \bTheta^{-1}     
\end{bmatrix}  $   & 
\multirow{2}{*}{ \tabincell{c}{ \\ \\   $\bM, \bOmega,\bTheta,\bXi \in \bbS_{++}$ \\ $\bM \succ   \bA^\top \bTheta \bA + \bB^\top \bXi \bB $ \\  $\bOmega  \succ  \bTheta$ } }  \\ 
 \cline{1-3}    
\texttt{MIX-II}  & $  \begin{bmatrix}
\bM  & \bf 0 & -\bB^\top  & -\bA^\top   \\
\bf 0 &  \bOmega  & \bf 0 & -\I_{M_1} \\
-\bB   & \bf 0  &   \bXi^{-1}   & \bf 0  \\
 -\bA &  -\I_{M_1}    & \bf 0 & \bTheta^{-1}
\end{bmatrix}  $  & $  \begin{bmatrix}
\bM  & \bf 0 & -\bB^\top  & -\bA^\top   \\
\bf 0 &  \bOmega  & \bf 0 & -\I_{M_1} \\
-\bB   & \bf 0  &   \bXi^{-1}   & \bf 0  \\
 -\bA &  -\I_{M_1}    & \bf 0 & \bTheta^{-1}
\end{bmatrix}  $   &  \\ 
 \hline 
\texttt{MIX-III} & $ \begin{bmatrix}
 \bM & \bf 0 &  -\bB^\top & \bf 0   \\
\bf 0 & \bOmega & \bf 0   &  -\I_{M_1}   \\
 -\bB &  \bf 0   & \bXi^{-1}  & \bf 0    \\
\bf 0  & -\I_{M_1}   & \bf  0   &  \bTheta^{-1}  
\end{bmatrix}  $  & $ \begin{bmatrix}
 \bM & \bf 0 &  -\bB^\top & \bf 0   \\
\bf 0 & \bOmega & \bf 0   &  -\I_{M_1}   \\
 -\bB &  \bf 0   & \bXi^{-1}  & \bf 0    \\
\bf 0  & -\I_{M_1}   & \bf  0   &  \bTheta^{-1}  
\end{bmatrix}  $   &
\tabincell{c}{ $\bM, \bOmega, \bXi,\bTheta \in \bbS_{++}$ \\   $\bM \succ  \bB^\top \bXi \bB$ \\ 
$\bOmega \succ \bTheta$ }  \\ 
 \hline  
\texttt{MIX-IV} & $  \begin{bmatrix}
\bM +\bA^\top \bTheta \bA +\bB^\top \bXi \bB  & \bf 0 & -\bB^\top &  -\bA^\top   \\
\bf 0 & \bOmega   & \bf 0 & \bf 0 \\
-\bB  & \bf 0  &   \bXi^{-1}   & \bf 0  \\
-\bA  &   \bf 0   & \bf 0 & \bTheta^{-1}
\end{bmatrix}   $  & $ \begin{bmatrix}
\bM   & \bf 0 & - \bB^\top   &  -\bA^\top  \\
\bf 0 & \bOmega   & \bf 0 & \bf 0  \\
-\bB  & \bf 0  &   \bXi^{-1}     &  \bf 0  \\
 - \bA  &  \bf 0   &  \bf 0  & \bTheta^{-1} 
\end{bmatrix} $  &
\tabincell{c}{ $\bM, \bTheta, \bXi \in \bbS_{++}$ \\ 
$\bOmega \in \bbS_+$ \\ $\bM \succ  \bA^\top \bTheta \bA+  \bB^\top \bXi \bB $ }  \\ 
 \hline  
\texttt{MIX-V} & $  \begin{bmatrix}
\bM +\bA^\top \bTheta \bA +\bB^\top \bXi \bB 
 & -\bA^\top \bTheta & -\bB^\top &  -\bA^\top \\
-\bTheta \bA  & \bOmega+\bTheta   & \bf 0 & \I_{M_1} \\
-  \bB    & \bf 0  &   \bXi^{-1}    & \bf 0   \\
-\bA    &  \I_{M_1} & \bf 0  & \bTheta^{-1} 
\end{bmatrix} $    & 
 $ \begin{bmatrix}
\bM   & \bf 0 & \bB^\top   &  \bA^\top  \\
\bf 0 & \bOmega   & \bf 0 & \I_{M_1} \\
\bB   & \bf 0  &   \bXi^{-1}    & \bf 0  \\
 \bA &  \I_{M_1}  & \bf 0  & \bTheta^{-1} 
\end{bmatrix}  $    &  \tabincell{c}{ $\bM, \bOmega,\bTheta,\bXi \in \bbS_{++}$ \\ $\bM \succ   \bA^\top \bTheta  \bA + \bB^\top \bXi \bB $ \\  $\bOmega  \succ  \bTheta $ }    \\ 
 \hline  
\texttt{MIX-VI}  &  $  \begin{bmatrix}
 \bM  & \bf 0 & -\bB^\top  & \bf 0 \\
\bf 0 &   \bOmega  & \bf 0 & \bf 0  \\
-\bB   & \bf 0  &   \bXi^{-1}   & \bf 0  \\
 \bf 0 &  \bf 0  & \bf 0 & \bTheta^{-1}     
\end{bmatrix}    $   & 
 $ \begin{bmatrix}
 \bM  & \bf 0 & -\bB^\top  & \bf 0 \\
\bf 0 &   \bOmega - \bTheta  & \bf 0 & \bf 0 \\
-\bB   & \bf 0  &   \bXi^{-1}   & \bf 0  \\
 \bf 0 &  \bf 0  & \bf 0 & \bTheta^{-1}   
\end{bmatrix}   $    &  \tabincell{c}{ $\bM, \bOmega,\bTheta,\bXi \in \bbS_{++}$ \\ $\bM \succ   \bB^\top \bXi \bB $ \\  $\bOmega  \succ  \bTheta $ }    \\ 
    \Xhline{1.2pt}  
     \end{tabular}  } 
\vskip 0.5em
\end{table}

The hybrid schemes can be interpreted by alternating optimization. For instance, the $\bx$-updates of \texttt{MIX-I}, \texttt{MIX-II}, \texttt{MIX-IV} and \texttt{MIX-V} are  from non-augmented  \eqref{obj_mix}:
\[
\bx^{k+1}  :=  \arg \min_\bx \cL(\bx,\ba^k,\bb^k,\bp^k) + \frac{1}{2} \|\bx - \bx^k\|_{\bM}^2.
\]
The $\ba$-updates of \texttt{MIX-I}, \texttt{MIX-II}, \texttt{MIX-III} and \texttt{MIX-V} are  from non-augmented  \eqref{obj_mix}. The $\ba$-update of \texttt{MIX-I}, for instance, is
\[
\ba^{k+1}  :=  \arg \min_\ba \cL(\bx^k,\ba,\bb^k,\bp^k) + \frac{1}{2} \|\ba - \ba^k\|_{\bOmega}^2.
\]

The $\bx$-updates of \texttt{MIX-III} and  \texttt{MIX-VI} are  from augmented  \eqref{obj_mix_aug}:
\[
\bx^{k+1}  :=  \arg \min_\bx \cL_\bTheta(\bx,\ba^k,\bb^k,\bp^k) + \frac{1}{2} \|\bx - \bx^k\|_{\bM}^2.
\]
The $\ba$-update of \texttt{MIX-VI} is from
\[
\ba^{k+1}  :=  \arg \min_\ba \cL_\bTheta(\bx^k,\ba,\bb^k,\bp^k) + \frac{1}{2} \|\ba - \ba^k\|_{\bOmega}^2.
\]

Following  the discussion of \cite[Sect. 6]{fxue_gopt}, it is easy to verify that \texttt{MIX-I} and \texttt{III} can be reduced to \texttt{PDS-I} under a certain conditions.

Again, the preconditioning can be applied to the $\bx$-updates of \texttt{MIX-III,IV} or $\ba$-update of \texttt{MIX-IV}.

\subsection{The generalized Bregman distance and ergodic primal-dual gap}
\label{sec_mix_gap}
Similar to Sect. \ref{sec_lag_gap} and \ref{sec_pds_gap}, the  PPA framework could  also provide a unified treatment of the primal-dual gap of the hybrid algorithms.  
\begin{lemma} \label{l_gap_mix}
Given the hybrid  form $\cL(\bx,\ba,\bb,\bp)$ as \eqref{obj_mix}, consider all the hybrid  schemes listed in Table 7, where $\bctilde^k = (\bxtilde^k, \batilde^k, \bbtilde^k, \bptilde^k) $ denotes the proximal output, when the schemes are interpreted by the PPA (shown in Table 8). Then,  the following holds, $\forall \bc =(\bx, \ba, \bb, \bp) \in \R^N   \times \R^{M_1} \times \R^{M_2} \times \R^{M_1}$:

{\rm (i)} $\bPi(\bctilde^k, \bc) \le \big  \langle \cQ (\bctilde^k - \bc^k)  \big| \bc -\bctilde^k  \big \rangle$,

{\rm (ii)} $\bPi  \big( \frac{1}{k}  \sum_{i=0}^{k-1} \bctilde^i, \bc  \big)  \le \frac{1}{2k} \big\| \bc^0 - \bc \big\|_\cS^2$.
\end{lemma}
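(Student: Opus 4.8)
The plan is to transcribe, almost line for line, the argument of Lemma~\ref{l_gap_lag} (see also Lemma~\ref{l_gap_pd}), because every hybrid scheme in Table~7 is driven by the \emph{same} monotone operator $\cA$ and differs only in the metric $\cQ$ and relaxation $\cM$ recorded in Table~8. Throughout, $\bPi(\bc,\bc') := \cL(\bx,\ba,\bb',\bp') - \cL(\bx',\ba',\bb,\bp)$, i.e.\ the primal block $(\bx,\ba)$ is read off the first argument and the dual block $(\bb,\bp)$ off the second.

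First I would record that, whatever row of Table~8 one picks, the proximal step of the hybrid scheme can be written as
\[
\left\lfloor \begin{array}{llll}
\bf 0 & \in & \partial f(\bxtilde^k) + \bB^\top\bbtilde^k + \bA^\top\bptilde^k + \bQ_1(\bctilde^k - \bc^k), \\
\bf 0 & \in & \partial g(\batilde^k) - \bptilde^k + \bQ_2(\bctilde^k - \bc^k), \\
\bf 0 & \in & \partial h^*(\bbtilde^k) - \bB\bxtilde^k + \bQ_3(\bctilde^k - \bc^k), \\
\bf 0 & = & -\bA\bxtilde^k + \batilde^k + \bQ_4(\bctilde^k - \bc^k),
\end{array}\right.
\]
where $\bQ_1,\dots,\bQ_4$ are the block rows of $\cQ$ and the last line is an \emph{equality}. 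Then I would invoke the subgradient inequalities for $f$, $g$ and $h^*$ at $\bxtilde^k$, $\batilde^k$ and $\bbtilde^k$, using the first three lines to substitute out the subgradients in favour of $-(\bB^\top\bbtilde^k+\bA^\top\bptilde^k)-\bQ_1(\bctilde^k-\bc^k)$, $\bptilde^k-\bQ_2(\bctilde^k-\bc^k)$ and $\bB\bxtilde^k-\bQ_3(\bctilde^k-\bc^k)$, and add the three resulting bounds.

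Plugging this into $\bPi(\bctilde^k,\bc) = \cL(\bxtilde^k,\batilde^k,\bb,\bp) - \cL(\bx,\ba,\bbtilde^k,\bptilde^k)$ and expanding, the bilinear terms carrying $\bB$ cancel identically (just as the dual-block coupling did in the PDS computation of Lemma~\ref{l_gap_pd}), while the bilinear terms carrying $\bA$, $\bptilde^k$ and $\bp$ collapse to $\langle \bp - \bptilde^k \,|\, \bA\bxtilde^k - \batilde^k\rangle$; using the fourth line this equals $\langle \bQ_4(\bctilde^k - \bc^k) \,|\, \bp - \bptilde^k\rangle$. What remains is $\langle \bQ_1(\bctilde^k - \bc^k)\,|\,\bx-\bxtilde^k\rangle + \langle \bQ_2(\bctilde^k - \bc^k)\,|\,\ba-\batilde^k\rangle + \langle \bQ_3(\bctilde^k - \bc^k)\,|\,\bb-\bbtilde^k\rangle + \langle \bQ_4(\bctilde^k - \bc^k)\,|\,\bp-\bptilde^k\rangle = \langle \cQ(\bctilde^k - \bc^k) \,|\, \bc-\bctilde^k\rangle$, which is (i). For (ii) I would feed (i) into Lemma~\ref{l_gppa}-(i) to get $\bPi(\bctilde^i,\bc) \le \frac{1}{2}\|\bc^i - \bc\|_\cS^2 - \frac{1}{2}\|\bc^{i+1} - \bc\|_\cS^2 - \frac{1}{2}\|\bc^i - \bc^{i+1}\|^2_{\cM^{-\top}\cG\cM^{-1}}$, sum over $i=0,\dots,k-1$ so the first two terms telescope, discard the nonnegative last term, and finish by observing that $\bc' \mapsto \bPi(\bc',\bc)$ is affine, hence convex, so Jensen's inequality pushes the ergodic average inside $\bPi$.

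The only real obstacle is the bilinear bookkeeping: one must check that the $\bB$-coupling terms cancel \emph{without} appealing to any structural identity (only the $\bp$-residual gets rewritten, via the equality row $\bA\bxtilde^k - \batilde^k = \bQ_4(\bctilde^k - \bc^k)$), and that the sign conventions of $\bPi$ and of the $-h^*(\bb)$ term in $\cL$ are tracked consistently. Once these are pinned down, (i) is a mechanical computation and (ii) is word-for-word the proof of Lemma~\ref{l_gap_lag}-(ii).
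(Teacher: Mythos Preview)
Your proposal is correct and follows the paper's proof essentially line for line: the same block decomposition of the proximal inclusion, the same three subgradient inequalities for $f$, $g$, $h^*$, the same use of the equality row to handle the $\bp$-residual, and the same telescoping-plus-Jensen argument for (ii). One small slip: $\bc' \mapsto \bPi(\bc',\bc)$ is not affine (it involves $f(\bx')+g(\ba')+h^*(\bb')$), but it \emph{is} convex, which is all Jensen requires---the paper states exactly this.
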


\begin{proof}
(i) First, note that the proximal step of all the hybrid schemes listed in Table 7 can be written as:
\[
\begin{bmatrix}
\bf 0 \\ \bf 0 \\ \bf 0 \\ \bf 0  \end{bmatrix} \in 
\begin{bmatrix}
\partial f & \bf 0 & \bB^\top & \bA^\top \\
\bf 0 & \partial g & \bf 0  &  -\I_{M_1} \\
-\bB & \bf 0 & \partial h^* & \bf 0 \\ 
-\bA & \I_{M_1} & \bf 0  & \bf 0   \end{bmatrix}
\begin{bmatrix}
\bxtilde^k \\ \batilde^k \\ \bbtilde^k \\ \bptilde^k \end{bmatrix} 
+ \begin{bmatrix}
\text{---}\bQ_1\text{---} \\ 
\text{---}\bQ_2\text{---} \\ 
\text{---}\bQ_3\text{---} \\
\text{---}\bQ_4\text{---} \end{bmatrix} 
(\bctilde^k - \bc^k ),
\]
which is
\be \label{q6}
\left\lfloor \begin{array}{llll}
\bf 0  & \in  & \partial f(\bxtilde^k) +\bB^\top \bbtilde^k +\bA^\top \bptilde^k  + \bQ_1 (\bctilde^k - \bc^k), \\ 
\bf 0  & \in  & \partial g(\batilde^k)  - \bptilde^k  + \bQ_2 (\bctilde^k - \bc^k), \\ 
\bf 0  & \in  & \partial h^*(\bbtilde^k)  - \bB \bxtilde^k  + \bQ_3 (\bctilde^k - \bc^k), \\ 
\bf 0  & =  & -\bA\bxtilde^k +\batilde^k +  \bQ_4 (\bctilde^k - \bc^k). 
 \end{array}  \right.
\ee

Then, by convexity of $f$, $g$ and $h^*$, we develop:
\begin{eqnarray}
f(\bx) & \ge & f(\bxtilde^k) + \langle \partial f(\bxtilde^k) | \bx - \bxtilde^k \rangle 
\nonumber \\
& = &  f(\bxtilde^k) - \langle \bB^\top \bbtilde^k  | \bx - \bxtilde^k \rangle- \langle \bA^\top \bptilde^k  | \bx - \bxtilde^k \rangle
- \langle \bQ_1(\bctilde^k - \bc^k)  | \bx - \bxtilde^k \rangle,
\quad \text{by \eqref{q6}} \nonumber
\end{eqnarray}
\begin{eqnarray}
g(\ba) & \ge & g(\batilde^k) + \langle \partial g(\batilde^k) | \ba - \batilde^k \rangle 
\nonumber \\
& = &  g(\batilde^k) + \langle  \bptilde^k  | \ba - \batilde^k \rangle
- \langle \bQ_2(\bctilde^k - \bc^k)  | \ba - \batilde^k \rangle,
\quad \text{by \eqref{q6}} \nonumber
\end{eqnarray}
\begin{eqnarray}
h^*(\bb) & \ge & h^*(\bbtilde^k) + \langle \partial h^*(\bbtilde^k) | \bb - \bbtilde^k \rangle 
\nonumber \\
& = & h^*(\bbtilde^k) + \langle \bB \bxtilde^k  | \bb - \bbtilde^k \rangle
- \langle \bQ_3(\bctilde^k - \bc^k)  | \bb - \bbtilde^k \rangle.
\quad \text{by \eqref{q6}} \nonumber
\end{eqnarray}
Summing up the above three inequalities yields
\begin{eqnarray}
&& f(\bx) +g(\ba) - f(\bxtilde^k) - g(\batilde^k) 
+h^*(\bb) - h^*(\bbtilde^k)
\nonumber \\
&\ge & - \langle \bB^\top \bbtilde^k  | \bx - \bxtilde^k \rangle  - \langle \bA^\top \bptilde^k  | \bx - \bxtilde^k \rangle
- \langle \bQ_1(\bctilde^k - \bc^k)  | \bx - \bxtilde^k \rangle
\nonumber \\ 
& + & \langle \bptilde^k  | \ba - \batilde^k \rangle
- \langle \bQ_2(\bctilde^k - \bc^k)  | \ba - \batilde^k \rangle
\nonumber \\ 
& + & \langle \bB \bxtilde^k  | \bb - \bbtilde^k \rangle
- \langle \bQ_3(\bctilde^k - \bc^k)  | \bb - \bbtilde^k \rangle.
\nonumber
\end{eqnarray}
Finally, we have
\begin{eqnarray}
&& \cL(\bxtilde^k,\batilde^k,\bb,\bp) - \cL(\bx,\ba,\bbtilde^k,\bptilde^k)
\nonumber \\
& =& f(\bxtilde^k) + g(\batilde^k)+ h^*(\bbtilde^k)  - f(\bx) - g(\ba) -h^*(\bb)
\nonumber \\
&+& \langle \bp | \bA\bxtilde^k - \batilde^k \rangle
-\langle \bptilde^k | \bA\bx - \ba \rangle
 -  \langle \bbtilde^k | \bB\bx \rangle
 +  \langle \bb | \bB\bxtilde^k \rangle
\nonumber \\
&\le &   \langle \bQ_1(\bctilde^k - \bc^k)  | \bx - \bxtilde^k \rangle +  \langle \bQ_2(\bctilde^k - \bc^k)  | \ba - \batilde^k \rangle +  \langle \bQ_3(\bctilde^k - \bc^k)  | \bb - \bbtilde^k \rangle
\nonumber \\ 
& + &  \langle \bB^\top \bbtilde^k  | \bx - \bxtilde^k \rangle  + \langle \bA^\top \bptilde^k  | \bx - \bxtilde^k \rangle
  -  \langle \bptilde^k  | \ba - \batilde^k \rangle
- \langle \bB \bxtilde^k  | \bb - \bbtilde^k \rangle
\nonumber \\
&+& \langle \bp | \bA\bxtilde^k - \batilde^k \rangle
-\langle \bptilde^k | \bA\bx - \ba \rangle
 -  \langle \bbtilde^k | \bB\bx \rangle
 +  \langle \bb | \bB\bxtilde^k \rangle
 \nonumber \\
&= &   \langle \bQ_1(\bctilde^k - \bc^k)  | \bx - \bxtilde^k \rangle +  \langle \bQ_2(\bctilde^k - \bc^k)  | \ba - \batilde^k \rangle +  \langle \bQ_3(\bctilde^k - \bc^k)  | \bb - \bbtilde^k \rangle
\nonumber \\ 
&+& \langle   \bA \bxtilde^k -\batilde^k | 
\bp - \bptilde^k \rangle 
 \nonumber \\
&= &   \langle \bQ_1(\bctilde^k - \bc^k)  | \bx - \bxtilde^k \rangle +  \langle \bQ_2(\bctilde^k - \bc^k)  | \ba - \batilde^k \rangle +  \langle \bQ_3(\bctilde^k - \bc^k)  | \bb - \bbtilde^k \rangle
\nonumber \\ 
&+&   \langle \bQ_4(\bctilde^k - \bc^k)  | \bp - \bptilde^k \rangle \quad \text{by  \eqref{q6}}
\nonumber \\ 
& = &   \langle \cQ(\bctilde^k - \bc^k)  | \bc - \bctilde^k \rangle. 
\nonumber 
\end{eqnarray}

(ii) similar to Lemma \ref{l_gap_lag}-(ii) and Lemma \ref{l_gap_pd}-(ii).
\end{proof}

Similar to the Lagrangian and PDS schemes, for the hybrid algorithms, $\bPi(\bc, \bcstar) $ with  $ \bcstar \in \zer \cA$  is essentially a special instance of generalized Bregman distance generated by $q(\bu):=f(\bx) +g(\ba)+h^*(\bb)$ between any point $\bu= (\bx,  \ba,\bb)$ and a saddle point  $\bustar= (\bxstar, \bastar,\bbstar)$.  More specifically, $0 \le D_q^\flat(\bu,\bu^\star) \le \bPi(\bc, \bcstar)  \le  D_q^\sharp (\bu,\bu^\star)$.  Indeed, the generalized Bregman distance is given as
\begin{eqnarray}
0 & \le & D_q^\flat(\bu,\bustar) = \inf_{\bv \in \partial q(\bustar)} q(\bu) - q(\bustar) + \langle \bv | \bustar - \bu \rangle 
\nonumber \\
&=&  f(\bx) - f(\bxstar) + \inf_{\bv \in \partial f(\bxstar)} \langle \bv | \bxstar - \bx \rangle  + g(\ba) - g(\bastar) + \inf_{\bt \in \partial g(\bastar)} \langle \bt | \bastar - \ba \rangle 
\nonumber \\
&+&  h^*(\bb) - h^*(\bbstar) + \inf_{\bs \in \partial h^*(\bbstar)}  \langle \bs | \bbstar - \bb \rangle  
\nonumber \\
& \le &  f(\bx) - f(\bxstar) + \langle \bB^\top \bbstar +\bA^\top \bpstar  | \bx - \bxstar \rangle  + g(\ba) - g(\bastar) - \langle   \bpstar  | \ba - \bastar \rangle 
\nonumber \\
&+&  h^*(\bb) - h^*(\bbstar) - \langle \bB \bxstar  | \bb - \bbstar \rangle  
\nonumber \\
&=&  f(\bx) - f(\bxstar)  + g(\ba) - g(\bastar) + h^*(\bb) - h^*(\bbstar) 
\nonumber \\
&+&   \langle \bB \bx |  \bbstar \rangle - \langle \bB \bxstar   | \bb  \rangle + \langle \bpstar  | \bA\bx  - \ba \rangle  
\nonumber \\
&=&  f(\bx) - f(\bxstar)  + g(\ba) - g(\bastar) + h^*(\bb) - h^*(\bbstar) 
\nonumber \\
&+&   \langle \bB \bx |  \bbstar \rangle - \langle \bB \bxstar   | \bb  \rangle + \langle \bpstar  | \bA\bx  - \ba \rangle  
- \langle \bpstar  | \underbrace{ \bA\bxstar  - \bastar}_{= \bf 0} \rangle  
\nonumber \\
&=&  \cL(\bx,\ba,\bbstar,\bpstar) - 
\cL(\bxstar, \bastar, \bb, \bp)
\nonumber \\
& =& \bPi(\bc, \bcstar)  \le D_q^\sharp (\bu,\bustar). 
\nonumber 
 \end{eqnarray}

Then, we obtain the  convergence rate of the generalized Bregman distance $\bPi(\bc^k, \bcstar) $ in an ergodic sense. 
\begin{theorem} \label{t_bregman_mix}
For all the hybrid  algorithms shown in Table 7, the generalized Bregman distance generated by $f(\bx)+g(\ba)+h^*(\bb)$ between the ergodic point $\frac{1}{k}  \sum_{i=0}^{k-1} \bctilde^i$ and a saddle point $\bcstar \in \zer \cA$ has a rate of $\cO(1/k)$:
\[
0\le \bPi  \bigg( \frac{1}{k}  \sum_{i=0}^{k-1} \bctilde^i, \bcstar \bigg)  \le \frac{1}{2k} \big\| \bc^0 - \bcstar \big\|_\cS^2,
\]
where $\{\bctilde^i\}_{i\in\N}$ and $\cS$ are defined in Lemma \ref{l_gap_mix} and \ref{l_gppa}.
\end{theorem}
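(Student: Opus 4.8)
The plan is to mirror exactly the argument already used for Theorem \ref{t_bregman_lag} and Theorem \ref{t_bregman_pds}, since all the substantive work has been carried out in Lemma \ref{l_gap_mix} and in the displayed chain of inequalities preceding the present statement. The proof splits into the lower bound (non-negativity) and the upper bound (the $\cO(1/k)$ rate), and neither part requires any new ingredient.

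For the lower bound I would invoke the displayed computation immediately above, which shows that for any $\bcstar = (\bxstar,\bastar,\bbstar,\bpstar) \in \zer\cA$ one has $0 \le D_q^\flat(\bu,\bustar) \le \bPi(\bc,\bcstar)$, with $q(\bu) := f(\bx)+g(\ba)+h^*(\bb)$, $\bu = (\bx,\ba,\bb)$ and $\bustar = (\bxstar,\bastar,\bbstar)$. The first inequality is simply the non-negativity of the lower generalized Bregman distance $D_q^\flat$, which holds because $q$ is convex (each summand $f$, $g$, $h^*$ being convex); the second uses that $\bcstar \in \zer\cA$ encodes the KKT relations $-\bB^\top\bbstar - \bA^\top\bpstar \in \partial f(\bxstar)$, $\bpstar \in \partial g(\bastar)$, $\bB\bxstar \in \partial h^*(\bbstar)$, $\bA\bxstar = \bastar$, which legitimise the subgradient substitutions in that computation. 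Specialising to $\bc = \frac{1}{k}\sum_{i=0}^{k-1}\bctilde^i$ yields $\bPi\big(\frac{1}{k}\sum_{i=0}^{k-1}\bctilde^i,\bcstar\big) \ge 0$.

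For the upper bound I would simply specialise Lemma \ref{l_gap_mix}-(ii) to the choice $\bc = \bcstar \in \zer\cA$, which immediately gives $\bPi\big(\frac{1}{k}\sum_{i=0}^{k-1}\bctilde^i,\bcstar\big) \le \frac{1}{2k}\|\bc^0 - \bcstar\|_\cS^2$. (Recall that Lemma \ref{l_gap_mix}-(ii) itself was obtained by combining part (i) with Lemma \ref{l_gppa}-(i), telescoping the resulting $\cS$-norm identity from $i=0$ to $k-1$, discarding the non-negative $\cM^{-\top}\cG\cM^{-1}$-terms, and finally using convexity of $\bc \mapsto \bPi(\bc,\bcstar)$ together with Jensen's inequality to pull the average inside.) Concatenating the two bounds completes the argument.

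The only point demanding attention — routine rather than genuinely hard — is verifying that the KKT decomposition read off from $\bcstar \in \zer\cA$ coincides with the monotone operator $\cA$ listed in Table 8 for the hybrid schemes, so that the Bregman chain above applies uniformly to all of \texttt{MIX-I} through \texttt{MIX-VI}. Since every hybrid scheme shares the \emph{same} $\cA$ (only $\cQ$ and $\cM$ vary across the table), this check is performed once and then valid for the whole family, and no hypotheses beyond those of Lemma \ref{l_gppa} and Lemma \ref{l_gap_mix} are needed.
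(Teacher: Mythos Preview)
Your proposal is correct and follows exactly the approach the paper takes: the paper's own proof simply reads ``similar to Theorem \ref{t_bregman_lag} or \ref{t_bregman_pds}'', i.e., non-negativity from the displayed Bregman chain immediately preceding the statement and the upper bound by specialising Lemma \ref{l_gap_mix}-(ii) at $\bc=\bcstar$. Your additional remarks about the KKT relations encoded in $\bcstar\in\zer\cA$ and the uniformity across \texttt{MIX-I}--\texttt{MIX-VI} (same $\cA$, varying $\cQ$, $\cM$) are accurate and make explicit what the paper leaves implicit.
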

The proof is similar to Theorem \ref{t_bregman_lag} or  \ref{t_bregman_pds}.

\vskip.2cm
Likewise, for the class of hybrid algorithms, for given sets $B_1 \subset \R^N$, $B_2 \subset \R^{M_1}$, $B_3 \subset \R^{M_2}$ and $B_4 \subset \R^{M_1}$, the {\it primal-dual gap} function restricted to $B_1 \times B_2 \times B_3 \times B_4$ is defined as
\be  \label{gap_mix}
\bPsi_{B_1 \times B_2 \times B_3 \times B_4} (\bc) = \sup_{(\bb', \bp') \in B_3 \times B_4} \cL(\bx,\ba,\bb',\bp') -  \inf_{(\bx',\ba') \in B_1\times B_2 } \cL(\bx',\ba',\bb, \bp),
\ee 
which has the upper bound:
\begin{corollary} \label{c_gap_mix}
Under the conditions of Theorem \ref{t_bregman_mix}, if the set    $ B_1 \times B_2 \times B_3 \times B_4$ is bounded, the primal-dual gap defined as \eqref{gap_mix} has the upper bound:
\[
\bPsi_{B_1 \times B_2 \times B_3 \times B_4}   \bigg( \frac{1}{k}  \sum_{i=0}^{k-1} \bctilde^i  \bigg)  \le \frac{1}{2k}
\sup_{\bc \in B_1 \times B_2 \times B_3 \times B_4} 
\big\| \bc^0 - \bc \big\|_\cS^2.
\] 
Furthermore, $\bPsi_{B_1 \times B_2 \times B_3 \times B_4} ( \frac{1}{k}  \sum_{i=0}^{k-1} \bctilde^i  ) \ge 0$, if the set $B_1 \times B_2 \times B_3 \times B_4$ contains a saddle point $\bcstar = (\bxstar, \bastar, \bbstar, \bpstar) \in \zer \cA$.
\end{corollary}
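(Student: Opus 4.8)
The plan is to follow verbatim the argument used for Corollary~\ref{c_gap_lag}, the only difference being the larger number of primal and dual blocks. First I would record the elementary reformulation of \eqref{gap_mix} as a supremum of the difference functional: exactly as at the end of Section~\ref{sec_lag_gap}, splitting the outer $\sup$ over $(\bb',\bp')$ from the $\inf$ over $(\bx',\ba')$ in \eqref{gap_mix} gives
\[
\bPsi_{B_1 \times B_2 \times B_3 \times B_4}(\bc) = \sup_{\bc' \in B_1 \times B_2 \times B_3 \times B_4} \bPi(\bc,\bc'),
\]
where $\bPi(\bc,\bc') = \cL(\bx,\ba,\bb',\bp') - \cL(\bx',\ba',\bb,\bp)$ with $\cL$ given by \eqref{obj_mix}.

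Next I would apply Lemma~\ref{l_gap_mix}-(ii), which holds for \emph{every} $\bc = (\bx,\ba,\bb,\bp) \in \R^N \times \R^{M_1} \times \R^{M_2} \times \R^{M_1}$, to the ergodic point $\bchat^k := \frac{1}{k}\sum_{i=0}^{k-1}\bctilde^i$, obtaining $\bPi(\bchat^k,\bc) \le \frac{1}{2k}\|\bc^0 - \bc\|_\cS^2$. Since $B_1 \times B_2 \times B_3 \times B_4$ is bounded, passing to the supremum over $\bc$ in this set on both sides is legitimate and, combined with the reformulation above, produces the claimed upper bound for $\bPsi_{B_1 \times B_2 \times B_3 \times B_4}(\bchat^k)$.

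For the non-negativity claim I would invoke Theorem~\ref{t_bregman_mix}: if the set contains a saddle point $\bcstar = (\bxstar,\bastar,\bbstar,\bpstar) \in \zer \cA$, then $\bPsi_{B_1 \times B_2 \times B_3 \times B_4}(\bchat^k) \ge \bPi(\bchat^k,\bcstar) \ge 0$, the last inequality being exactly the non-negativity of $\bPi(\cdot,\bcstar)$ that was derived through the lower generalized Bregman distance $D_q^\flat$ with $q(\bu) = f(\bx)+g(\ba)+h^*(\bb)$ in the display preceding Theorem~\ref{t_bregman_mix}.

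There is no genuine obstacle here; the entire content is already packaged in Lemma~\ref{l_gap_mix} and Theorem~\ref{t_bregman_mix}. The only point needing a line of care is verifying the supremum-of-$\bPi$ reformulation of \eqref{gap_mix}, which is the same bookkeeping as in Section~\ref{sec_lag_gap}, together with the observation that Lemma~\ref{l_gap_mix}-(ii) is stated for unrestricted $\bc$, so that restricting to the bounded set merely replaces the right-hand side by the finite quantity $\frac{1}{2k}\sup_{\bc \in B_1 \times B_2 \times B_3 \times B_4}\|\bc^0-\bc\|_\cS^2$.
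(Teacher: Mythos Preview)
Your proposal is correct and follows essentially the same approach as the paper, which simply states that the proof is similar to Corollary~\ref{c_gap_lag} or~\ref{c_gap_pds}. You have spelled out exactly those details: the reformulation $\bPsi = \sup_{\bc'} \bPi(\cdot,\bc')$, the supremum over the bounded set in Lemma~\ref{l_gap_mix}-(ii), and the non-negativity via Theorem~\ref{t_bregman_mix}.
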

The proof is similar to Corollary \ref{c_gap_lag} or \ref{c_gap_pds}.

\begin{remark}
For  \texttt{MIX-I, II} and \texttt{III} with corresponding $\cM=\cI$, Theorem \ref{t_bregman_mix} and Corollary \ref{c_gap_mix} can be simplified as  $\bPi  \big( \frac{1}{k}  \sum_{i=1}^{k} \bc^i, \bcstar  \big)  \le  \frac{1}{2k} \big\| \bc^0 - \bcstar \big\|_\cQ^2$ and 
$\bPsi_{B_1 \times B_2 \times B_3 \times B_4} $ $ \big( \frac{1}{k}  \sum_{i=1}^{k} \bc^i  \big)  \le \frac{1}{2k}
\sup_{\bc \in B_1 \times B_2 } \big\| \bc^0 - \bc \big\|_\cQ^2$.
\end{remark}

\begin{remark} \label{r_gap_mix}
Similarly to Remarks \ref{r_gap_lag} and \ref{r_gap_pds}, under additional conditions on $f$, $g$ and $h^*$, one can obtain {\it the convergence rate of $\cO(1/k)$ of the sequence of $f(\bx)+g(\ba) +h(\bB\bx)$, evaluated at the ergodic averaging point $\frac{1}{k} \sum_{i=0}^{k-1} \bctilde^i$}, namely, it holds that:
\be   \label{gap_mix}
f\bigg( \frac{1}{k}  \sum_{i=0}^{k-1} \tilde{\bx}^i\bigg)
+  g\bigg(\frac{1}{k}  \sum_{i=0}^{k-1} \tilde{\ba}^i\bigg) 
+ h\bigg( \bB \Big( \frac{1}{k}  \sum_{i=0}^{k-1} \tilde{\bx}^i \Big) \bigg) -  f(\bxstar) -g(\bastar) - h(\bB \bxstar)  
 \le   C/k,
\ee
for some constant $C$.

To show this, we first rewrite $\cL(\bx,\ba,\bb,\bp) $ in \eqref{obj_mix} as
\[
\cL(\bu,\bv) := q(\bu)  + \langle \bv | \bU \bu \rangle  -  l^*(\bv), 
\] 
where $\bu = \begin{bmatrix}
\bx \\ \ba \end{bmatrix}$, $\bv = \begin{bmatrix}
\bb \\ \bp \end{bmatrix}$, $\bU = \begin{bmatrix}
 \bB & \bf 0 \\  \bA & -\I  \end{bmatrix}$, $q: (\bx,\ba) \mapsto f(\bx) +g(\ba)$, $l: (\bb,\bp) \mapsto h(\bb)$. Since the sequence $\{\tilde{\bp}^k\}_{k\in \N}$ converges by Theorem \ref{t_gppa}, and thus lies in an unknown bounded set  $B_4 \subset \R^{M_1}$. Obviously, $ \frac{1}{k}  \sum_{i=0}^{k-1} \tilde{\bp}^i  \in B_4$, $\bpstar \in B_4$. On the other hand, if $\dom f$, $\dom g$ and $\dom h^*$ are bounded, then we can simply take the sets $B_1 = \dom f$, $B_2=\dom g$ and $B_3=\dom h^*$.    Denoting the ergodic averaging point by
 $\bchat^k = \frac{1}{k}  \sum_{i=0}^{k-1} \tilde{\bc}^i$
($\bxhat^k$, $\bahat^k$, $\bbhat^k$ and $\bphat^k$ are defined similarly),  using Fenchel-Young inequality \cite[Proposition 13.15]{plc_book}, we develop
\begin{eqnarray}
&& \bPsi_{B_1 \times B_2 \times B_3 \times B_4}   \big( 
\bchat^k   \big)  
\nonumber \\ 
&=& \sup_{\bv' \in B_3\times B_4} \cL \big( \buhat^k,  \bv' \big) -   \inf_{\bu' \in B_1 \times B_2} \cL \big(
\bu',  \bvhat^k \big) 
\nonumber \\ 
&=& \sup_{\bv' \in B_3 \times B_4} q\big(\buhat^k \big) 
+\big\langle \bv' \big| \bU \buhat^k  \big\rangle - l^*(\bv')
-  \inf_{ \bu'   \in B_1 \times B_2} \big( q(\bu') +
\big\langle \bvhat^k  \big| \bU \bu'  \big\rangle  - l^*\big( 
\bvhat^k  \big) \big)
\nonumber \\
&=&  q\big( \buhat^k  \big) 
+ l\big( \bU \buhat^k  \big) + l^*\big( \bvhat^k  \big) 
-  \inf_{ \bu'   \in B_1 \times B_2} \big( q(\bu') +
\big\langle \bvhat^k \big| \bU \bu'  \big\rangle  \big)
\nonumber \\ 
& \ge & q\big( \buhat^k  \big) 
+ l\big( \bU \buhat^k  \big) + l^*\big( \bvhat^k  \big) 
- q(\bustar) - 
\big\langle \bvhat^k \big| \bU \bustar  \big\rangle 
\nonumber \\ 
& \ge &  q\big( \buhat^k  \big) 
+ l\big( \bU \buhat^k  \big) - q(\bustar) - l(\bU \bustar)
\nonumber \\ 
& = &  q\big( \buhat^k  \big) 
+ h\big( \bB \bxhat^k  \big) - q(\bustar) - h(\bB \bxstar),
\nonumber 
\end{eqnarray} 
which, combining with Corollary \ref{c_gap_mix}, yields \eqref{gap_mix}.
Still, \eqref{gap_mix} holds for all the hybrid algorithms listed in Table 7. Similarly with Remark \ref{r_gap_lag}, the constant $C$ cannot be easily estimated, since the bounded set $B_4$ is not {\it a priori} known.
\end{remark}

\subsection{Reductions of some hybrid algorithms}
As reported in Table 8, \texttt{MIX-I,II} and \texttt{III} correspond to a standard PPA with $\cM=\cI$, and thus, they can be readily expressed as a resolvent \eqref{aa}, by Corollary \ref{c_gppa}-(iii).

Observing that neither of $\bM$, $\bOmega$, $\bXi$ and $\bTheta$ is allowed to be $\bf 0$ for convergence. It is impossible to reduce any variables (i.e. all the variables are active) in   \texttt{MIX-I, II} and \texttt{III}.

\section{Concluding remarks}
\label{sec_conclusion}
The numerical performance of these splitting algorithms has been reported in \cite[Sect. 7]{fxue_gopt}, which is not discussed here.

The proximal point analysis is shown to be able to (i) provide a unified treatment of the generalized Bregman distance and ergodic primal-dual gap; (ii) identify the active variables and reduce the algorithmic dimensionality. The degeneracy reduction in this paper is essentially an application of the degenerate analysis of \cite{bredies_preprint} to the operator splitting algorithms. An important implication of the degeneracy reduction  is that it is possible to loosen  the strict convergence results (e.g., Theorem \ref{t_gppa} and Corollary \ref{t_gppa}) to positive {\it semi-}definite metric \cite{fxue_1} under a certain conditions, which needs further careful study.

Despite of the success of interpretations using the proximal point analysis demonstrated in \cite{fxue_gopt} and this paper, an evident  limitation is that it cannot deal with, for example, the case of \cite[Sect. 5]{cp_2011}, where only one function is assumed to be strongly convex. It may need to exploit the inner structure of $\cA$ and $\cQ$ based on a subspace analysis, e.g., partially strongly convex operator \cite{partial}, which will further enrich the degenerate theory poineered in \cite{bredies_preprint}. 

\begin{acknowledgements}
I am gratefully indebted to the anonymous reviewers and the editor for helpful discussions, particularly related to the convergence analysis of PPA (Lemma \ref{l_gppa}), the notion of infimal postcomposition (Lemma \ref{l_duality}), the generalized Bregman distance and primal-dual gap (Sect. \ref{sec_lag_bregman}, \ref{sec_lag_gap}, \ref{sec_pds_gap} and \ref{sec_mix_gap}), and for bringing references \cite{nem,pock_iccv,yanming_2018} to my attention. 
\end{acknowledgements}

% Authors must disclose all relationships or interests that 
% could have direct or potential influence or impart bias on 
% the work: 
%
 \section*{Conflict of interest}

The authors declare that they have no conflict of interest.

% BibTeX users please use one of
%\bibliographystyle{spbasic}      % basic style, author-year citations
\bibliographystyle{spmpsci}      % mathematics and physical sciences
\bibliography{refs}   % name your BibTeX data base

\begin{thebibliography}{10}
\providecommand{\url}[1]{{#1}}
\providecommand{\urlprefix}{URL }
\expandafter\ifx\csname urlstyle\endcsname\relax
  \providecommand{\doi}[1]{DOI~\discretionary{}{}{}#1}\else
  \providecommand{\doi}{DOI~\discretionary{}{}{}\begingroup
  \urlstyle{rm}\Url}\fi

\bibitem{arias_2011}
Brice\~{n}o Arias, L., Combettes, P.: A monotone+skew splitting model for
  composite monotone inclusions in duality.
\newblock SIAM J. Control Optim. \textbf{21}(4), 1230--1250 (2011)

\bibitem{arias_parallel}
Brice\~{n}o Arias, L., Rold\'{a}n, F.: Resolvent of the parallel composition
  and the proximity operator of the infimal postcomposition.
\newblock Optimization Letters, {\rm DOI: 10.1007/s11590-022-01906-5}  (2022)

\bibitem{bai_2018}
Bai, J., Zhang, H., Li, J.: A parameterized proximal point algorithm for
  separable convex optimization.
\newblock Optimization Letters \textbf{12}, 1589--1608 (2018)

\bibitem{plc_book}
Bauschke, H.H., Combettes, P.L.: Convex Analysis and Monotone Operator Theory
  in Hilbert Spaces.
\newblock Second Edition, CMS Books in Mathematics, Springer, New York, NY
  (2017)

\bibitem{beck_book}
Beck, A.: First-Order Methods in Optimization.
\newblock SIAM-Society for Industrial and Applied Mathematics (2017)

\bibitem{bot_2014}
Bo\c{t}, R., Csetnek, E.: On the convergence rate of a forward-backward type
  primal-dual primal-dual splitting algorithm for convex optimization problems.
\newblock Optimization \textbf{64}(1), 5--23 (2014)

\bibitem{bot_jmiv_2014}
Bo\c{t}, R.I., Hendrich, C.: Convergence analysis for a primal-dual
  monotone+skew splitting algorithm with applications to total variation
  minimization.
\newblock J. Math. Imaging Vis. \textbf{49}, 551--568 (2014)

\bibitem{bredies_preprint}
Bredies, K., Chenchene, E., Lorenz, D.A., Naldi, E.: Degenerate preconditioned
  proximal point algorithms.
\newblock SIAM Journal on Optimization \textbf{32}(3), 2376--2401 (2022)

\bibitem{bredies_2017}
Bredies, K., Sun, H.: A proximal point analysis of the preconditioned
  alternating direction method of multipliers.
\newblock J. Optim. Theory Appl. \textbf{173}, 878--907 (2017)

\bibitem{cjf_1}
Cai, J., Osher, S., Shen, Z.: Linearized bregman iterations for compressed
  sensing.
\newblock Mathematics of Computation \textbf{78}, 1515--1536 (2009)

\bibitem{cp_2011}
Chambolle, A., Pock, T.: A first-order primal-dual algorithm for convex
  problems with applications to imaging.
\newblock J. Math. Imag. Vis. \textbf{40}(1), 120--145 (2011)

\bibitem{cp_2016}
Chambolle, A., Pock, T.: On the ergodic convergence rates of a first-order
  primal–dual algorithm.
\newblock Math. Program., Ser. A \textbf{159}(1--2), 253--287 (2016)

\bibitem{pesquet_2016}
Chouzenoux, E., Pesquet, J.C., Repetti, A.: A block coordinate variable metric
  forward-backward algorithm.
\newblock Journal of Global Optimization \textbf{66}, 457--485 (2016)

\bibitem{plc_2012}
Combettes, P., Pesquet, J.: Primal-dual splitting algorithm for solving
  inclusions with mixtures of composite, {L}ipschitzian, and parallel-sum type
  monotone operators.
\newblock Set-Valued Var. Anal. \textbf{20}(2), 307--330 (2012)

\bibitem{plc_fixed}
Combettes, P., Pesquet, J.: Fixed point strategies in data science.
\newblock IEEE Transactions on Signal Processing \textbf{69}, 3878--3905 (2021)

\bibitem{plc}
Combettes, P., Wajs, V.: Signal recovery by proximal forward-backward
  splitting.
\newblock Multiscale Modeling and Simulation \textbf{4}(4), 1168--1200 (2005)

\bibitem{condat_2013}
Condat, L.: A primal-dual splitting method for convex optimization involving
  {L}ipschitzian, proximable, and linear composite terms.
\newblock J. Optim. Theory Appl. \textbf{158}(2), 460--479 (2013)

\bibitem{drori}
Drori, Y., Sabach, S., Teboulle, M.: A simple algorithm for a class of
  nonsmooth convex-concave saddle-point problems.
\newblock Operations Research Letters \textbf{43}(2), 209--214 (2015)

\bibitem{drs_1992}
Eckstein, J., Bertsekas, D.P.: On the {D}ouglas-{R}achford splitting method and
  the proximal point algorithm for maximal monotone operators.
\newblock Mathematical Programming \textbf{55}(1), 293--318 (1992)

\bibitem{frankel_2015}
Frankel, P., Garrigos, G., Peypouquet, J.: Splitting methods with variable
  metric for {Kurdyka-{\L ojasiewicz} Functions and General Convergence Rates}.
\newblock Journal of Optimization Theory and Applications \textbf{165}(3),
  874--900 (2015)

\bibitem{glowinski_2}
Glowinski, R.: Numerical Methods for Nonlinear Variational Problems.
\newblock Springer, New York (1984)

\bibitem{glowinski_1}
Glowinski, R., Marrocco, A.: Sur l'approximation par \'{e}l\'{e}ments finis
  d'ordure un et la r\'{e}solution par p\'{e}nalisation-dualit\'{e} d'une
  classe de probl\`{e}mes de dirichlet non lin\'{e}aires.
\newblock Revue Fr. Autom. Inf. Rech. Op\'{e}r. Anal. Num\'{e}r. \textbf{2},
  41--76 (1975)

\bibitem{sb}
Goldstein, T., Osher, S.: The split {B}regman method for $\ell_1$-regularized
  problems.
\newblock SIAM J. Imaging Sciences \textbf{2}(2), 323--343 (2009)

\bibitem{alves_2018}
Gonçalves, M.L.N., Marques, A.M., Melo, J.G.: Pointwise and ergodic
  convergence rates of a variable metric proximal alternating direction method
  of multipliers.
\newblock Journal of Optimization Theory and Applications \textbf{177},
  448--478 (2018)

\bibitem{hbs_jmiv_2017}
He, B., Ma, F., Yuan, X.: An algorithmic framework of generalized primal-dual
  hybrid gradient methods for saddle point problems.
\newblock Journal of Mathematical Imaging and Vision \textbf{58}(2), 279--293
  (2017)

\bibitem{hbs_2018}
He, B., Xu, M., Yuan, X.: Block-wise {ADMM} with a relaxation factor for
  multiple-block convex programming.
\newblock J. Oper. Res. Soc. China \textbf{6}, 485--505 (2018)

\bibitem{hbs_siam_2012}
He, B., Yuan, X.: On the $\mathcal{O}(1/n)$ convergence rate of the
  {D}ouglas-{R}achford alternating direction method.
\newblock SIAM J. Numerical Analysis \textbf{50}(2), 700--709 (2012)

\bibitem{hbs_yxm_2015}
He, B., Yuan, X.: On non-ergodic convergence rate of {Douglas}-{Rachford}
  alternating direction method of multipliers.
\newblock Numerische Mathematik \textbf{130}(3), 567--577 (2015)

\bibitem{hbs_yxm_2018}
He, B., Yuan, X.: A class of {ADMM}-based algorithms for three-block separable
  convex programming.
\newblock Comput. Optim. Appl. \textbf{70}, 791--826 (2018)

\bibitem{kiwiel}
Kiwiel, K.: Proximal minimization methods with generalized bregman functions.
\newblock SIAM journal on control and optimization \textbf{35}(4), 1142--1168
  (1997)

\bibitem{lions}
Lions, P., Mercier, B.: Splitting algorithms for the sum of two nonlinear
  operators.
\newblock SIAM Journal on Numerical Analysis \textbf{16}(6), 964--979 (1979)

\bibitem{mafeng_2018}
Ma, F., Ni, M.: A class of customized proximal point algorithms for linearly
  constrained convex optimization.
\newblock Comp. Appl. Math. \textbf{37}, 896--911 (2018)

\bibitem{burachik_svaa}
Mart\'{\i}nez-Legaz, R.S.B..J.E.: On bregman-type distances for convex
  functions and maximally monotone operators.
\newblock Set-Valued and Variational Analysis \textbf{26}, 369--384 (2018)

\bibitem{nem}
Nemirovski, A.: Prox-method with rate of convergence $\mathcal{O}(1/t)$ for
  variational inequalities with {L}ipschitz continuous monotone operators and
  smooth convex-concave saddle point problems.
\newblock SIAM J. Optim. \textbf{15}(1), 229--251 (2004)

\bibitem{osher_tv}
Osher, S., Burger, M., Goldfarb, D., Xu, J., Yin, W.: An iterative
  regularization method for total variation-based image restoration.
\newblock Multiscale Model. Simul. \textbf{4}(2), 460--489 (2005)

\bibitem{vanden_2014}
O’Connor, D., Vandenberghe, L.: Primal-dual decomposition by operator
  splitting and applications to image deblurring.
\newblock SIAM J. Imaging Sciences \textbf{7}(3), 1724--1754 (2014)

\bibitem{pock_iccv}
Pock, T., Cremers, D., Bischof, H., Chambolle, A.: An algorithm for minimizing
  the {M}umford-{S}hah functional.
\newblock In: IEEE Int. Conf. on Computer Vision, pp. 1133--1140 (2009)

\bibitem{rtr_book}
Rockafellar, R.T.: Convex analysis.
\newblock Princeton Landmarks in Mathematics and Physics, Princeton University
  Press (1996)

\bibitem{rtr_book_2}
Rockafellar, R.T., Wets, R.J.B.: Variational Analysis.
\newblock Springer, Grundlehren der Mathematischen Wissenschaft, vol. 317
  (2004)

\bibitem{shefi}
Shefi, R., Teboulle, M.: Rate of convergence analysis of decomposition methods
  based on the proximal method of multipliers for convex minimization.
\newblock SIAM J. Optim. \textbf{24}(1), 269--297 (2014)

\bibitem{dinh}
Tran{-}Dinh, Q., Fercoq, O., Cevher, V.: A smooth primal-dual optimization
  framework for nonsmooth composite convex minimization.
\newblock {SIAM} J. Optim. \textbf{28}(1), 96--134 (2018)

\bibitem{vu_2015}
V\~{u}, B.: A splitting algorithm for coupled system of primal-dual monotone
  inclusions.
\newblock Journal of Optimization Theory and Applications \textbf{164},
  993--1025 (2015)

\bibitem{partial}
Valkonen, T., Pock, T.: Acceleration of the {PDHGM} on partially strongly
  convex functions.
\newblock Journal of Mathematical Imaging and Vision \textbf{59}(3), 394--414
  (2017)

\bibitem{fxue_2}
Xue, F.: On the metric resolvent: nonexpansiveness, convergence rates and
  applications.
\newblock arXiv preprint: arXiv:2108.06502  (2021)

\bibitem{fxue_1}
Xue, F.: On the nonexpansive operators based on arbitrary metric: A degenerate
  analysis.
\newblock Results in Mathematics, {\rm DOI: 10.1007/s00025-022-01766-6}  (2022)

\bibitem{fxue_gopt}
Xue, F.: Some extensions of the operator splitting schemes based on
  {L}agrangian and primal-dual: A unified proximal point analysis.
\newblock Optimization, {\rm DOI: 10.1080/02331934.2022.2057309}  (2022)

\bibitem{yanming_2018}
Yan, M.: A new primal-dual algorithm for minimizing the sum of three functions
  with a linear operator.
\newblock Journal of Scientific Computing \textbf{76}, 1698--1717 (2018)

\bibitem{self_eq}
Yan, M., Yin, W.: Self Equivalence of the Alternating Direction Method of
  Multipliers, pp. 165--194.
\newblock Springer, Cham. (2016)

\bibitem{yin_2008}
Yin, W., Osher, S., Goldfarb, D., Darbon, J.: Bregman iterative algorithms for
  $\ell_1$-minimization with applications to compressed sensing.
\newblock SIAM J. Imaging Sciences \textbf{1}(1), 143--168 (2008)

\bibitem{zxq}
Zhang, X., Burger, M., Osher, S.: A unified primal-dual algorithm framework
  based on {B}regman iteration.
\newblock Journal of Scientific Computing \textbf{46}(1), 20--46 (2011)

\bibitem{pdhg}
Zhu, M., Chan, T.: An efficient primal-dual hybrid gradient algorithm for total
  variation image restoration.
\newblock \rm CAM Report 08-34, UCLA  (2008)

\end{thebibliography}

\end{document}